\newcommand{\End}{\mathop{\mathrm{End}}}
\newcommand{\Res}{\mathop{\mathrm{Res}}\nolimits}
\newcommand{\Ind}{\mathop{\mathrm{Ind}}\nolimits}
\newcommand{\J}{\mathrel{\mathscr J}} 
\newcommand{\R}{\mathrel{\mathscr R}} 
\newcommand{\eL}{\mathrel{\mathscr L}} 
\newcommand{\HH}{\mathrel{\mathscr H}}
\newcommand{\inv}{^{-1}}
\newcommand{\ov}[1]{\ensuremath{\overline {#1}}}
\newcommand{\til}[1]{\ensuremath{\widetilde {#1}}}
\newcommand{\wh}{\widehat}
\newcommand{\Hom}{\mathop{\mathrm{Hom}}\nolimits}
\newcommand{\GGM}{\equiv_{\mathsf{GGM}}}
\newtheorem{Thm}{Theorem}[section]
\newtheorem{Prop}[Thm]{Proposition}
\newtheorem{Lemma}[Thm]{Lemma}
{\theoremstyle{definition}
}
{\theoremstyle{remark}
}
\newtheorem{Cor}[Thm]{Corollary}
{\theoremstyle{remark}
}
{\theoremstyle{remark}
}
\theoremstyle{remark}
\theoremstyle{remark}
\theoremstyle{remark}
\theoremstyle{remark}
\newtheorem*{Claim*}{Claim}}
\numberwithin{equation}{section}
\title[Extensions of theorems of Gasch\"utz, \v{Z}mud$'$ and Rhodes]{Extensions of theorems of Gasch\"utz, \v{Z}mud$'$ and Rhodes on faithful representations}
\author{Benjamin Steinberg}
\address[B.~Steinberg]{%
    Department of Mathematics\\
    City College of New York\\
    Convent Avenue at 138th Street\\
    New York, New York 10031\\
    USA}
\email{bsteinberg@ccny.cuny.edu}
\thanks{The author was supported by a PSC CUNY grant and a Simons Foundation Collaboration Grant.}
\date{January 22, 2022}
\keywords{faithful representations, semigroups, groups}
\subjclass[2010]{20M30, 20C15}
\begin{document}

\begin{abstract}
Gasch\"utz (1954) proved that a finite group $G$ has a faithful irreducible complex representation if and only if its socle is generated by a single element as a normal subgroup; this result extends to arbitrary fields  of characteristic $p$ so long as $G$ has no nontrivial normal $p$-subgroup.  \v{Z}mud$'$ (1956) showed
that the minimum number of irreducible constituents in a faithful complex representation of $G$ coincides with the minimum number of generators of its socle as a normal subgroup; this result can also be extended to arbitrary fields of any characteristic $p$ such that $G$ has no nontrivial normal $p$-subgroup (i.e., over which $G$ admits a faithful completely reducible representation).

Rhodes (1969) characterized the finite semigroups admitting a faithful irreducible representation over an arbitrary field as generalized group mapping semigroups over a group admitting a faithful irreducible representation over the field in question.  Here, we provide a common generalization of the theorems of \v{Z}mud$'$ and Rhodes by determining the minimum number of irreducible constituents in a faithful completely reducible representation of a finite semigroup over an arbitrary field (provided that it has one).

Our key tool for the semigroup result is a relativized version of \v{Z}mud$'$'s theorem that determines, given a finite group $G$ and a normal subgroup $N\lhd G$, what is the minimum number of irreducible constituents in a completely reducible representation of $G$ whose restriction to $N$ is faithful.
\end{abstract}

\maketitle

\section{Introduction}

A very classical, and difficult, question in the representation theory of finite groups asks to determine the minimum degree of a faithful representation.    Nearly 10 years ago, the author and Mazorchuk~\cite{effective} undertook a detailed study of the analogous question for finite semigroups.  The situation is even more complicated for semigroups and our work ended up using a mixture of group character theory, finite lattices and some rudimentary algebraic geometry (in addition to semigroup theory).  However, the degree of a representation, i.e., the dimension of the corresponding module, is not the only natural measure of the complexity of a representation.  One could consider, instead, the length of the module, that is, the number of composition factors  (or in the completely reducible case, the number of irreducible constituents).  The analogous question is then to find the minimum length of a faithful representation.  If a group or semigroup has a split basic algebra over the ground field (that is, each irreducible representation is one-dimensional), then the minimum length and minimum degree of a faithful representation coincide.  In particular, the minimum length question for a $p$-group over a field of characteristic $p$ or for a nilpotent semigroup over any field is equivalent to the minimum degree question and seems to be quite hard.

However, if one avoids bad characteristics for a group, then there are nontrivial results determining the minimum length of a faithful representation.  Note that having a length one faithful representation is equivalent to having a faithful irreducible representation.  The question as to which groups have a faithful irreducible complex representation goes all the way back to Burnside~\cite{Burnsidebook}.  A nice history of this problem can be found in Szechtman's paper~\cite{Szechtman}.  A rough summary is that the first characterization was found by Shoda~\cite{Shoda}, but it wasn't in some sense entirely group theoretic.  A somewhat more group theoretic characterization was found by Weisner~\cite{Weisner} (and later rediscovered by Kochend\"orffer~\cite{Kochen}).  Nakayama showed that the characterizations originally proved in the nonmodular setting also apply in the modular setting~\cite{Nakayamafaithful}.  The most elegant characterization was found by Gasch\"utz~\cite{Gaschutz} who showed that a finite group $G$ has a faithful complex irreducible representation if and only if its socle is generated by a single element as a normal subgroup.  Szechtman gives a nice explanation of how to directly deduce the Shoda, Weisner and Gasch\"utz characterizations from each other~\cite{Szechtman}.  These results work over arbitrary fields so long as the group $G$ has no nontrivial normal $p$-subgroup, where $p$ is the characteristic of the field.  Let me remark that Gasch\"utz's proof is based on counting the sums of the squares of the dimensions of the faithful irreducible representations of $G$ in terms of the number of elements that generate the socle of $G$ as a normal subgroup, based on an inclusion-exclusion (or M\"obius inversion) argument that exploits a duality on the lattice of normal subgroup of $G$ contained in the socle of $G$.  Szechtman's paper~\cite{Szechtman} gives  a module theoretic proof.

The question of determining whether a group $G$ has a faithful complex representation with $k$ irreducible constituents was first addressed by Tazawa~\cite{Tazawa} using the same language as Shoda.  Nakayama considered arbitrary fields in~\cite{Nakayamafaithful}, again using the language of Shoda and Tazawa.  \v{Z}mud$'$~\cite{Zmud1} proved the more elegant result, generalizing Gasch\"utz, that a finite group $G$ has a faithful complex representation with $k$ irreducible constituents if and only if the socle of $G$ can be generated by $k$ elements as a normal subgroup.  Like Gasch\"utz his proof is based on an inclusion-exclusion argument.  Later~\cite{zmud2}, \v{Z}mud$'$ extended the result to arbitrary fields whose characteristic does not divide the order of $G$.  This approach is based on studying the primitive idempotents in a certain commutative semisimple subalgebra of the center of the group algebra of $G^k$.  However, we shall see in this paper that the approach of Szechtman to Gasch\"utz's theorem can be extended to prove that \v{Z}mud$'$'s theorem holds so long as $G$ has a faithful completely reducible representation over the field in question, that is, it has no nontrivial normal $p$-subgroup where $p$ is the characteristic of the field.  This also follows from Nakayama's work~\cite{Nakayamafaithful}, which basically shows in the Shoda language that the modular situation behaves like the nonmodular situation as long as the group has a faithful completely reducible representation.

Rhodes characterized~\cite{Rhodeschar} the finite semigroups admitting a faithful irreducible representation over an arbitrary field $K$.  A semigroup $S$ is generalized group mapping if  it acts faithfully on both the left and the right of some (distinguished) ideal $I$ which contains no proper, nonzero ideal. For example, the monoid $M_n(F)$ of $n\times n$ matrices over a finite field  $F$ is generalized group mapping where the ideal $I$ consists of the matrices of rank at most $1$.  If $S$ is generalized group mapping with distinguished ideal $I$, then there is a unique (up to isomorphism) maximal subgroup $G$ of $S$ contained in the nonzero elements of $I$; we say that $S$ is generalized group mapping over $G$.  Rhodes proved that $S$ admits a faithful irreducible representation over $K$ if and only if it is generalized group mapping over a group admitting a faithful irreducible representation over $K$. For example, if $F$ is a field of $q$ elements, the maximal subgroup of the rank $1$ matrices is isomorphic to $F^\times$, which being cyclic has a faithful irreducible representation over any field whose characteristic does not divide $q-1$.  Hence $M_n(F)$ has a faithful irreducible representation over any such field.

Rhodes also described in~\cite{Rhodeschar} the finite semigroups admitting a faithful completely reducible representation over a field of characteristic $0$.  This was extended by Almeida, Margolis, Volkov and the author to arbitrary fields in~\cite{AMSV}.  In this paper, our main result is to give a common generalization of the theorems of \v{Z}mud$'$ and of Rhodes by determining the minimum number of irreducible constituents in a faithful completely reducible representation of a finite semigroup (provided that it has one).  We also show that if $S$ has a faithful completely reducible representation, then the minimum length of a faithful representation is achieved by a completely reducible representation.  Hence, this also determines the minimum length of a faithful representation of $S$ in this setting.

Perhaps surprisingly, the key hurdle in generalizing \v{Z}mud$'$'s theorem to semigroups was purely group theoretic in nature. It turns out that one needs a relative version of \v{Z}mud$'$'s theorem.  Namely, given a finite group $G$ and a normal subgroup $N$, we need to determine the minimum number of irreducible constituents in a completely reducible representation of $G$ that restricts to a faithful representation of $N$.  So long as $N$ has no nontrivial normal $p$-subgroup (where $p$ is the characteristic of the field), this turns out to be the minimum number of normal generators for the intersection of $N$ with the socle of $G$.  Taking $G=N$ yields \v{Z}mud$'$'s theorem~\cite{zmud2} under weaker hypotheses.  Our proof is based on Szechtman's module theoretic proof of Gasch\"utz's theorem~\cite{Szechtman}, as it is not at all clear how to make the approach of~\cite{zmud2} work when the characteristic divides the order of the group.

The paper is organized as follows.  Section~\ref{s:group} is entirely group theoretic in nature and requires no knowledge of semigroups.  We review the socle of a finite group and its basic properties.  Of particular importance is the subgroup $A(G)$ generated by all the abelian minimal normal subgroups of $G$, which is a $\mathbb ZG$-module. We then study the Pontryagin dual of $A(G)\cap N$ where $N\lhd G$.  The key point is to show that the minimal number of generators of $A(G)\cap N$ and its dual as $\mathbb ZG$-modules is the same.  Then we argue, following the general idea of Szechtman's paper~\cite{Szechtman}, that the dual of $A(G)\cap N$ is $k$-generated as a module if and only if $G$ admits a completely reducible representation with $k$ irreducible constituents that restricts to a faithful representation of $N$.  We use Clifford's theorem and Frobenius reciprocity here, as well as the theory of Schur indices to reduce from an arbitrary field to an algebraically closed field.  We believe that restricted to the case $N=G$, this is a more natural proof than that of \v{Z}mud$'$~\cite{zmud2}, in addition to giving a more general result.

Section~\ref{s:semigroup} generalizes \v{Z}mud$'$'s theorem to semigroups.  Here, we quickly review some structure theory of finite semigroups and the Clifford-Munn-Ponizovsky theory of irreducible representations of finite semigroups, explaining how they are determined by irreducible representations of maximal subgroups. Then we recall Rhodes's insight connecting generalized group mapping semigroups with irreducible representations.  Finally, we prove the main theorem by applying the relativized \v{Z}mud$'$ theorem to the normal subgroup of a maximal subgroup consisting of those elements indistinguishable from the identity by irreducible representations of the semigroup coming from other maximal subgroups (which we characterize intrinsically).

\section{Faithful completely reducible representations of groups}\label{s:group}
Our goal in this section is to determine the minimal number of irreducible constituents in a completely reducible representation of a group $G$ that restricts to a faithful representation of a fixed normal subgroup $N\lhd G$.  When $N=G$, this boils down to a slight extension of a theorem of \v{Z}mud$'$~\cite{Zmud1,zmud2}.

 Throughout, we shall assume familiarity with basic aspects of group representation theory like Maschke's theorem, Frobenius reciprocity and, most importantly, Clifford's theorem.  Also familiarity with Wedderburn-Artin theory and the Jacobson radical is presumed.  A classic reference is~\cite{curtis}.

\subsection{Socles of finite groups}
Let $G$ be a finite group.  In fact, all groups will be assumed finite throughout this paper, although we may include the hypothesis for emphasis.
 By a \emph{minimal normal subgroup} of $G$, we mean a minimal nontrivial normal subgroup.  The subgroup generated by all the abelian minimal normal subgroups of $G$ is denoted $A(G)$ and the subgroup generated by all the nonabelian minimal normal subgroups of $G$ is denoted $T(G)$.  Note that $A(G)$ and $T(G)$ are  normal (in fact, characteristic) subgroups of $G$, essentially by definition. The \emph{socle} $S(G)$ of $G$ is the subgroup generated by all minimal normal subgroups of $G$.  Note that $S(G)=A(G)T(G)$ is a normal (indeed, characteristic) subgroup of $G$.  Our notation $A(G)$, $T(G)$ and $S(G)$ follows Huppert~\cite{Huppertchar}.

We collect here a number of standard facts about minimal normal subgroups and the socle.   These results can be extracted from  of~\cite[Theorem~4.3A]{dixonbook} or~\cite[Section~3.3]{robinson}; see also~\cite[Lemma~42.9]{Huppertchar}.

A useful property of minimal normal subgroups is the following direct complement property.

\begin{Prop}\label{p:complement}
If $M\lhd G$ is a minimal normal subgroup and $N\lhd G$ is any normal subgroup, then either $M\leq N$ or $M\cap N=\{1\}$, and so we have an internal direct product decomposition $MN=M\times N$.
\end{Prop}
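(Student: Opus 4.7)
The plan is to apply the defining minimality property of $M$ to the normal subgroup $M \cap N$ and then verify the internal direct product condition by a commutator calculation. First, since $M$ and $N$ are both normal in $G$, their intersection $M \cap N$ is also normal in $G$, and it is contained in $M$. By the minimality of $M$ among nontrivial normal subgroups of $G$, there are only two possibilities: either $M \cap N = M$, in which case $M \leq N$, or $M \cap N = \{1\}$. This yields the dichotomy in the statement.

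In the second case, to upgrade the product $MN$ to an internal direct product $M \times N$, it suffices to check that every element of $M$ commutes with every element of $N$, since we already have trivial intersection. For $m \in M$ and $n \in N$, the commutator $[m,n] = m^{-1}n^{-1}mn$ lies in $M$ because $n^{-1}mn \in M$ by normality of $M$, and it likewise lies in $N$ because $m^{-1}n^{-1}m \in N$ by normality of $N$. Hence $[m,n] \in M \cap N = \{1\}$, so $M$ and $N$ centralize each other. Combined with $M \cap N = \{1\}$, this gives $MN \cong M \times N$ internally.

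There is essentially no obstacle here: the only step that is not immediate from definitions is the standard observation that two normal subgroups with trivial intersection automatically centralize each other, which is handled by the commutator argument above. The whole proof is a couple of lines and uses nothing beyond the normality of $M$ and $N$ together with the minimality of $M$.
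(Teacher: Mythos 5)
Your proof is correct and is the standard argument: minimality applied to the normal subgroup $M\cap N\leq M$ gives the dichotomy, and the commutator computation $[m,n]\in M\cap N=\{1\}$ shows the two normal subgroups centralize each other in the trivial-intersection case. The paper itself gives no proof of this proposition, citing it as a standard fact from the references on permutation groups and group theory, so there is nothing to compare beyond noting that your argument is exactly the expected one.
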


Minimal normal subgroups have a rigid structure, as the following well-known result indicates.

\begin{Prop}\label{p:minimalnormal.struct}
Let $M\lhd G$ be a minimal normal subgroup.  Let $T\lhd M$ be a minimal normal subgroup of $M$.
\begin{enumerate}
\item $T$ is a simple group.
\item $M$ is an internal direct product $T_1\times\cdots\times T_k$ of minimal normal subgroups $T_i$ of $M$ that are conjugate to $T$ in $G$.
\item If $M$ is nonabelian, then $T_1,\ldots, T_k$ are all the minimal normal subgroups of $M$.
\end{enumerate}
\end{Prop}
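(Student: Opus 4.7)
The plan is to prove (2) first, then extract (1) and (3) from the resulting decomposition. The central observation is that because $M\lhd G$, conjugation by any $g\in G$ is an automorphism of $M$, so each $G$-conjugate $T^g$ of $T$ is again a minimal normal subgroup of $M$. Hence $H = \langle T^g : g\in G\rangle$ is a nontrivial $G$-normal subgroup of $M$, and minimality of $M$ in $G$ forces $H = M$.

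To obtain the direct product decomposition, I would take a maximal family $g_1,\dots,g_k\in G$ for which $N = T^{g_1}\cdots T^{g_k}$ is an internal direct product. For any other $g\in G$, the intersection $T^g\cap N$ is normal in $M$ and contained in the minimal normal subgroup $T^g$, so equals either $\{1\}$ or $T^g$; the first case would allow $T^g$ to be adjoined to the family in violation of maximality. Therefore $T^g\leq N$ for every $g\in G$, giving $N = H = M$, which is (2).

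With the decomposition $M = T_1\times\cdots\times T_k$ (with $T_1 = T$) in hand, distinct $T_i$'s centralize each other, and this is the leverage that yields (1): if $L$ is any normal subgroup of $T$, then $L$ is normalized by each $T_j$ (trivially for $j\geq 2$, by hypothesis for $j=1$), so $L\lhd M$. Since $L\leq T$ and $T$ is minimal normal in $M$, we conclude $L = \{1\}$ or $L = T$, i.e., $T$ is simple.

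For (3), assume $M$ is nonabelian. The $T_i$'s are $G$-conjugate to $T$, hence pairwise isomorphic, and since their direct product $M$ is nonabelian each $T_i$ must be a nonabelian simple group, so $Z(M) = \{1\}$. For any minimal normal $N\lhd M$, Proposition~\ref{p:complement} applied within $M$ gives, for each $i$, either $N\leq T_i$ (hence $N = T_i$ by minimality) or $N\cap T_i = \{1\}$, in which case $N$ commutes with $T_i$. If $N\neq T_i$ for every $i$, then $N$ centralizes all of $M$, forcing $N\leq Z(M) = \{1\}$, a contradiction. The main subtlety is the logical ordering of (1) and (2): simplicity of $T$ is not available a priori, so one must establish the direct product structure using only the minimality of the $T^g$'s as normal subgroups of $M$, and only then invoke centralizers within the decomposition to deduce that $T$ itself is simple.
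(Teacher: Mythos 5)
Your proof is correct. The paper does not actually prove this proposition — it records it as a standard fact and points to Dixon–Mortimer, Robinson and Huppert — and your argument is essentially the standard one found in those references: build $M$ as a direct product of $G$-conjugates of $T$ via a maximal-subfamily argument (which is valid, since adjoining a conjugate meeting the partial product trivially preserves the internal direct product property by the iterated criterion), then use that distinct factors centralize each other to deduce simplicity of $T$ and, in the nonabelian case, that the $T_i$ exhaust the minimal normal subgroups of $M$.
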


In the case $M$ is abelian, the previous proposition implies that $T$ is a cyclic group of prime order.

\begin{Cor}\label{c:abel.min.norm}
Every abelian minimal normal subgroup of $G$ is isomorphic to $(\mathbb Z/p\mathbb Z)^n$ for some prime $p$ and $n\geq 1$.
\end{Cor}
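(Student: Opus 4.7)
The plan is to read the corollary off Proposition~\ref{p:minimalnormal.struct}. Let $M\lhd G$ be an abelian minimal normal subgroup. Since $M$ is finite and nontrivial, it contains at least one minimal normal subgroup $T$ of $M$. By Proposition~\ref{p:minimalnormal.struct}(1), $T$ is a simple group; being a subgroup of the abelian group $M$, it is abelian. An abelian finite simple group is necessarily cyclic of prime order (every proper nontrivial subgroup is automatically normal, so the only simple examples are $\mathbb{Z}/p\mathbb{Z}$ for $p$ prime). Hence $T\cong \mathbb{Z}/p\mathbb{Z}$ for some prime $p$.

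Next, I would invoke Proposition~\ref{p:minimalnormal.struct}(2), which writes $M$ as an internal direct product $T_1\times\cdots\times T_k$ with each $T_i$ a $G$-conjugate of $T$. Conjugation in $G$ is an isomorphism of groups, so every $T_i$ is isomorphic to $T$, and hence to $\mathbb{Z}/p\mathbb{Z}$. Consequently $M\cong (\mathbb{Z}/p\mathbb{Z})^k$. Because a minimal normal subgroup is nontrivial by our standing convention, we have $k\geq 1$, and we take $n=k$.

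I do not anticipate any real obstacle: the corollary is essentially a repackaging of Proposition~\ref{p:minimalnormal.struct}, combined with the elementary classification of abelian finite simple groups. The only subtlety worth noting is that the prime $p$ is well defined, because any two of the $T_i$ are $G$-conjugate and hence of the same order.
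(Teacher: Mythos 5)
Your argument is correct and is exactly the route the paper intends: the corollary is stated as an immediate consequence of Proposition~\ref{p:minimalnormal.struct}, with $T$ simple and abelian forcing $T\cong \mathbb{Z}/p\mathbb{Z}$, and the direct product decomposition into $G$-conjugates of $T$ giving $M\cong(\mathbb{Z}/p\mathbb{Z})^n$. Nothing to add.
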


The following theorem describes the structure of the socle of a finite group.

\begin{Thm}\label{t:socle.struct}
Let $G$ be a finite group.
\begin{enumerate}
\item $T(G)$ is the internal direct product $T_1\times\cdots\times T_r$  of all the  nonabelian minimal normal subgroups $T_1,\ldots, T_r$  of $G$ (possibly $r=0$).
\item $Z(T(G))=Z(T_1)\times \cdots\times Z(T_r)=\{1\}$.
\item $S(G)$ is the internal direct product $A(G)\times T(G)$, that is, $A(G)\cap T(G)=\{1\}$.
\item $A(G)$ is an internal direct product $A_1\times\cdots\times A_s$ where $A_1,\ldots, A_s$ are abelian minimal normal subgroups; in particular, $A(G)$ is abelian.
\item $S(G)$ is an internal direct product $M_1\times\cdots\times M_t$  for some minimal normal subgroups $M_1,\ldots, M_t$ of $G$.
\end{enumerate}
\end{Thm}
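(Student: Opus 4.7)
The plan is to establish (1) and (4) by essentially identical maximality arguments, then to read off (2) from (1) using Proposition~\ref{p:minimalnormal.struct}, then to attack the key intersection statement (3), and finally to bolt the pieces together for (5).

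For (4), I would fix a maximal family $A_1,\ldots,A_s$ of abelian minimal normal subgroups of $G$ whose product in $G$ is an internal direct product. Given any further abelian minimal normal subgroup $B\lhd G$, Proposition~\ref{p:complement} applied with $N=A_1\cdots A_s$ forces either $B\leq A_1\cdots A_s$ or $B\cap A_1\cdots A_s=\{1\}$; the second option would enlarge the family, violating maximality, so $B\leq A_1\cdots A_s$. Hence $A(G)=A_1\times\cdots\times A_s$, and abelianness of $A(G)$ follows. For (1) I would run the same maximality argument on the nonabelian minimal normal subgroups $T_1,\ldots,T_r$. The one subtlety is to rule out $T_j\leq T_1\cdots T_i$ for a fresh nonabelian minimal normal $T_j$: since distinct minimal normals of $G$ meet trivially, Proposition~\ref{p:complement} gives $T_j\cap T_l=\{1\}$ for each $l\leq i$, which implies $T_j$ centralizes each $T_l$ and hence centralizes everything in $T_1\cdots T_i$; since $T_j$ itself lies in that product, this forces $T_j$ to be abelian, a contradiction.

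Part (2) is then immediate: $Z(T(G))=Z(T_1)\times\cdots\times Z(T_r)$ by (1), and each $T_i$ is, by Proposition~\ref{p:minimalnormal.struct}(1)(2), a direct product of nonabelian simple groups, hence has trivial center. The main obstacle is part (3): one must show that the normal subgroup $A(G)\cap T(G)$ of $G$ is trivial. I would argue by contradiction. As a nontrivial normal subgroup of the finite group $G$, $A(G)\cap T(G)$ contains some minimal normal subgroup $M$ of $G$. Since $M\leq A(G)$ and $A(G)$ is abelian by (4), $M$ is abelian. Applying Proposition~\ref{p:complement} to $M$ and each $T_i$, either $M\leq T_i$ or $M\cap T_i=\{1\}$ must hold; the first option would force $M=T_i$ (both minimal normal in $G$), contradicting that $M$ is abelian while $T_i$ is not. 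Therefore $M\cap T_i=\{1\}$ for all $i$, so $M$ centralizes every $T_i$ and hence $M\leq C_G(T(G))$; combining with $M\leq T(G)$ gives $M\leq Z(T(G))=\{1\}$ by (2), a contradiction.

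Finally, (5) is just bookkeeping: combining $S(G)=A(G)T(G)$ with (3) gives $S(G)=A(G)\times T(G)$, and then expanding each factor via (4) and (1) realises $S(G)$ as the internal direct product of the minimal normal subgroups $A_1,\ldots,A_s,T_1,\ldots,T_r$ of $G$.
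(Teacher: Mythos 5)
Your proof is correct. The paper does not actually prove Theorem~\ref{t:socle.struct} --- it records these as standard facts and defers to \cite{dixonbook}, \cite{robinson} and \cite{Huppertchar} --- and your argument is precisely the standard one those references use: maximal direct families via Proposition~\ref{p:complement} for (1) and (4), triviality of $Z(T(G))$ via Proposition~\ref{p:minimalnormal.struct} for (2), and the centralizer argument forcing a putative minimal normal subgroup of $A(G)\cap T(G)$ into $Z(T(G))=\{1\}$ for (3).
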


If $A\lhd G$ is an abelian normal subgroup, then it is a $\mathbb ZG$-module where $G$ acts as automorphisms of $A$ via conjugation.
If $A$ is any finite $\mathbb ZG$-module and $p$ is a prime divisor of $|A|$, then since the $p$-Sylow subgroup $A_p$ of $A$ is characteristic, it is a $\mathbb ZG$-submodule of $A$.  Moreover, if $A_p$ is an elementary abelian $p$-group, that is, $pA=\{0\}$ (using additive notation) for all $a\in A_p$, then $A_p$ is naturally an $\mathbb F_pG$-module, where $\mathbb F_p$ is the $p$-element field,  and the $\mathbb ZG$-module structure factors through the $\mathbb F_pG$-module structure.

\begin{Prop}\label{p:AG.semisimple}
We have $A(G)=\bigoplus_{p\mid |A(G)|} A(G)_p$ as a $\mathbb ZG$-module.  Moreover, $A(G)_p$ is an elementary abelian $p$-group and is a semisimple $\mathbb F_pG$-module.
\end{Prop}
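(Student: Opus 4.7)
The plan is to read everything off from the structure theorem for the socle that has already been stated. By Theorem~\ref{t:socle.struct}(4), I may write $A(G) = A_1 \times \cdots \times A_s$ as an internal direct product of abelian minimal normal subgroups $A_i \lhd G$, and by Corollary~\ref{c:abel.min.norm} each $A_i$ is an elementary abelian $p_i$-group for some prime $p_i$. I would first group these factors by prime: for each prime $p$, let $B_p$ be the product of those $A_i$ with $p_i = p$. Since $B_p$ is a $p$-group and the complementary product $\prod_{p_j \neq p} A_j$ has order prime to $p$, we get $B_p = A(G)_p$. Thus $A(G)_p$ is an internal direct product of a subset of the $A_i$.

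The $p$-primary component of any finite abelian group is characteristic, hence $G$-invariant under the conjugation action, so $A(G)_p$ is a $\mathbb{Z}G$-submodule and the primary decomposition $A(G) = \bigoplus_{p \mid |A(G)|} A(G)_p$ is a decomposition of $\mathbb{Z}G$-modules. Since $A(G)_p$ is an internal direct product of elementary abelian $p$-groups of exponent $p$, it itself has exponent $p$, i.e., is elementary abelian. By the remark immediately preceding the proposition, the $\mathbb{Z}G$-module structure on $A(G)_p$ therefore factors through a well-defined $\mathbb{F}_p G$-module structure.

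It remains to establish semisimplicity of $A(G)_p$ as an $\mathbb{F}_p G$-module. Here I would observe that each $A_i$ with $p_i = p$ is a simple $\mathbb{F}_p G$-module: the $\mathbb{F}_p$-subspaces of the elementary abelian $p$-group $A_i$ coincide with its subgroups, so any $\mathbb{F}_p G$-submodule of $A_i$ is a $G$-invariant subgroup of $A_i$, and the minimality of $A_i$ as a normal subgroup of $G$ forces such a submodule to be either trivial or all of $A_i$. Consequently $A(G)_p = \bigoplus_{p_i = p} A_i$ is a sum of simple $\mathbb{F}_p G$-submodules, hence semisimple. I do not anticipate any real obstacle here; the entire argument is essentially bookkeeping on top of Theorem~\ref{t:socle.struct} and Corollary~\ref{c:abel.min.norm}, with the only mildly delicate point being the identification of $\mathbb{F}_p G$-submodules of an elementary abelian $p$-group with its $G$-invariant subgroups, which is transparent.
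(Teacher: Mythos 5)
Your proposal is correct and follows essentially the same route as the paper: identify $A(G)_p$ as built from the abelian minimal normal subgroups of $p$-power order, note that these are exactly the simple $\mathbb F_pG$-submodules, and conclude semisimplicity. The only cosmetic difference is that you invoke the internal direct product decomposition of Theorem~\ref{t:socle.struct}(4) to exhibit $A(G)_p$ as a direct sum of simples, whereas the paper merely observes that $A(G)_p$ is generated by simple submodules; both are standard and equivalent here.
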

\begin{proof}
The direct sum decomposition comes from the structure theory of finite abelian groups.  If $M$ is an abelian minimal normal subgroup, then it is an elementary abelian $p$-group for some $p\mid |A(G)|$ by Corollary~\ref{c:abel.min.norm}.  It follows that $A(G)_p$ is generated by the minimal normal subgroups of $G$ that it contains and hence each nontrivial element of $A(G)_p$ has order $p$, again by Corollary~\ref{c:abel.min.norm}.  Thus $A(G)_p$ is an elementary abelian $p$-group and hence an $\mathbb F_pG$-module.  But a minimal normal subgroup of $G$ contained in $A(G)_p$ is the same thing as a simple $\mathbb F_pG$-submodule, and so the fact that $A(G)_p$ is generated by the minimal normal subgroups of $G$ that it contains implies that $A(G)_p$ is generated by simple $\mathbb F_pG$-modules and hence is a semisimple $\mathbb F_pG$-module.
\end{proof}

We now wish to analyze how the socle interacts with normal subgroups of $G$.  The following result is well known (cf., the proof of~\cite[Lemma~42.10]{Huppertchar}), but we include a proof for completeness and to make (3) explicit.

\begin{Prop}\label{p:interact.with.normal}
Let $N\lhd G$ be a normal subgroup.  Let $T_1,\ldots, T_r$ be the nonabelian minimal normal subgroups of $G$.
\begin{enumerate}
\item $S(G)\cap N = (A(G)\cap N)\times (T(G)\cap N)$.
\item $T(G)\cap N = \prod_{i\in X} T_i$ for some $X\subseteq \{1,\ldots,r\}$.
\item $A(G)_p\cap N = (A(G)\cap N)_p$ and $A(G)_p\cap N$ is a semisimple $\mathbb F_pG$-module for each $p\mid |A(G)\cap N|$.
\item $A(G)\cap N=\bigoplus_{p\mid |A(G)\cap N|} A(G)_p\cap N$ as a $\mathbb ZG$-module and this is the direct sum decomposition into Sylow subgroups.
\end{enumerate}
\end{Prop}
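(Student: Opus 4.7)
The plan is to prove (2) first using the complement property of Proposition~\ref{p:complement} together with a commutator computation, then to derive (1) by the same commutator trick applied to the direct product decomposition $S(G)=A(G)\times T(G)$, and finally to read off (3) and (4) from the Sylow decomposition of $A(G)$ and the semisimplicity statement of Proposition~\ref{p:AG.semisimple}.

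For (2), I would apply Proposition~\ref{p:complement} to each nonabelian minimal normal subgroup $T_i$ and set $X=\{i:T_i\leq N\}$, so that the containment $\prod_{i\in X}T_i\subseteq T(G)\cap N$ is automatic. For the reverse, I would take $t=t_1\cdots t_r\in T(G)\cap N$, using the internal direct product from Theorem~\ref{t:socle.struct}(1), and, for $j\notin X$, observe that
\[ [T_j,t]\leq [T_j,N]\leq T_j\cap N=\{1\} \]
because $T_j$ and $N$ are both normal in $G$. Since the distinct $T_i$ commute pairwise, this collapses to $[T_j,t_j]=\{1\}$, which by the triviality of $Z(T_j)$ (Theorem~\ref{t:socle.struct}(2)) forces $t_j=1$, as required.

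For (1), the same argument works almost verbatim. Given $x\in S(G)\cap N$, I would factor it uniquely as $x=at_1\cdots t_r$ with $a\in A(G)$ and $t_i\in T_i$, using Theorem~\ref{t:socle.struct}(3). Since $[T_j,A(G)]\leq T_j\cap A(G)=\{1\}$ (by Theorem~\ref{t:socle.struct}(3)) and $[T_j,T_i]=1$ for $i\neq j$, the commutator $[T_j,x]$ reduces to $[T_j,t_j]$, and the bound $[T_j,x]\leq T_j\cap N=\{1\}$ again forces $t_j=1$ for $j\notin X$. Hence the $T(G)$-component $t$ of $x$ lies in $T(G)\cap N$ by (2), and then its $A(G)$-component $a=xt^{-1}$ lies in $N$ as well; combined with $A(G)\cap T(G)=\{1\}$ this is exactly the internal direct product in the statement.

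For (3), I would note that $A(G)_p$ is characteristic in $A(G)$ and hence normal in $G$, so $A(G)_p\cap N$ is a normal subgroup of $G$ contained in the elementary abelian $p$-group $A(G)_p$; this makes it an $\mathbb F_pG$-submodule, which is semisimple because $A(G)_p$ is (Proposition~\ref{p:AG.semisimple}). The identification $A(G)_p\cap N=(A(G)\cap N)_p$ is then immediate, since both sides are exactly the elements of $p$-power order in $A(G)\cap N$. Finally, (4) follows at once: the Sylow decomposition of the finite abelian group $A(G)\cap N$ identifies each summand $(A(G)\cap N)_p$ with the $\mathbb ZG$-submodule $A(G)_p\cap N$ via (3). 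The only step requiring genuine thought is the commutator computation used in (1) and (2); everything else is bookkeeping with the structural facts of Theorem~\ref{t:socle.struct}.
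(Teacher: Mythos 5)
Your proof is correct and follows essentially the same route as the paper: the paper also sets $X=\{i\mid T_i\leq N\}$ and uses the computation $ut_iu\inv t_i\inv = ugu\inv g\inv\in T_i\cap N$ together with $Z(T_i)=\{1\}$ and Proposition~\ref{p:complement}, which is exactly your commutator argument $[T_j,N]\leq T_j\cap N$ read in the contrapositive direction; parts (3) and (4) are handled identically. No issues.
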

\begin{proof}
For the first and second items, let $X=\{i\mid T_i\leq N\}$.  Let $H$ be the subgroup generated by $A(G)\cap N$ and the $T_i$ with $i\in X$.  Then $H\lhd G$ and $H\leq S(G)\cap N$.  Suppose $g\in S(G)\cap N$ and write $g=at_1\cdots t_r$ with $a\in A(G)$ and $t_i\in T_i$, as $S(G)=A(G)\times T_1\times\cdots\times T_r$ by Theorem~\ref{t:socle.struct}.  Suppose that $t_i\neq 1$.  Then since $Z(T_i)=\{1\}$  by Theorem~\ref{t:socle.struct}, we can find $u\in T_i$ with $ut_iu\inv \neq t_i$  Then $1\neq ut_iu\inv t_i\inv =ugu\inv g\inv\in N$, and so $T_i\cap N\neq \{1\}$. Therefore, $T_i\leq N$ by Proposition~\ref{p:complement}, and so $i\in X$.  It follows that $t_1\cdots t_r\in N$ and so $a=g(t_1\cdots t_r)\inv\in A(G)\cap N$.  Therefore, $g=at_1\cdots t_r\in H$ and so $H=S(G)\cap N$.  The first two items now follow.

For the third item, obviously $A(G)_p\cap N = (A(G)\cap N)_p$.  Since submodules of semisimple modules are semisimple, the third item  follows from Proposition~\ref{p:AG.semisimple}.  The fourth item is immediate from the third.
\end{proof}

Let us agree that a module (respectively, normal subgroup) is \emph{$k$-generated} (respectively, \emph{$k$-generated} as a normal subgroup) if it can be generated by $k$ not necessarily distinct elements; that is, it can be generated by $k$ or fewer elements.  If $A$ is an abelian normal subgroup of our group $G$, then $A$ is $k$-generated as a $\mathbb ZG$-module if and only if it is $k$-generated as a normal subgroup of $G$.

\begin{Prop}\label{p:finite.zgmod.gen}
Let $A$ be a finite $\mathbb ZG$-module.  Then $A=\bigoplus_{p\mid |A|} A_p$ as a $\mathbb ZG$-module and, moreover, $A$ is $k$-generated if and only if each $A_p$ is $k$-generated.
\end{Prop}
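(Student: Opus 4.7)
The plan is to prove both assertions by exploiting the standard primary decomposition of a finite abelian group, noting that this decomposition is automatically $G$-equivariant.

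First I would verify the direct sum decomposition. Since $A$ is a finite abelian group, we have the Sylow decomposition $A=\bigoplus_{p\mid |A|} A_p$ as an abelian group. Each Sylow subgroup $A_p$ is a characteristic subgroup of $A$, hence is preserved by every automorphism of $A$, and in particular by the action of $G$ by automorphisms. Therefore each $A_p$ is a $\mathbb Z G$-submodule and the decomposition is a decomposition of $\mathbb Z G$-modules. The projections $\pi_p\colon A\to A_p$ coming from this decomposition are $\mathbb Z G$-module homomorphisms.

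For the equivalence of $k$-generation, one direction is essentially automatic: if $A$ is generated as a $\mathbb Z G$-module by $a_1,\ldots,a_k$, then applying the $\mathbb Z G$-module homomorphism $\pi_p$ shows that $\pi_p(a_1),\ldots,\pi_p(a_k)$ generate $A_p$ for each $p$. The substantive direction is the converse: suppose each $A_p$ is generated by elements $b_{1,p},\ldots,b_{k,p}\in A_p$. For each $i\in\{1,\ldots,k\}$, set $c_i=\sum_{p\mid|A|} b_{i,p}\in A$. I claim $c_1,\ldots,c_k$ generate $A$ as a $\mathbb Z G$-module, which will finish the proof.

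To recover the individual components from $c_i$, I would use the coprimality of the Sylow orders. Fix a prime $p\mid|A|$ and let $N_p=\prod_{q\neq p}|A_q|$. Then $N_p$ annihilates $A_q$ for every $q\neq p$, while $N_p$ is coprime to $|A_p|$, so multiplication by $N_p$ is an automorphism of $A_p$. Choose an integer $m_p$ with $m_pN_p\equiv 1\pmod{|A_p|}$. Then $(m_pN_p)\cdot c_i=b_{i,p}$, so each $b_{i,p}$ lies in the $\mathbb Z G$-submodule generated by $c_i$. Since the $b_{i,p}$ with $i=1,\ldots,k$ generate $A_p$, the elements $c_1,\ldots,c_k$ generate $\bigoplus_p A_p=A$. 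The only step that requires any thought is the Chinese-remainder trick in this last paragraph; everything else is formal, and there is no real obstacle here beyond recording these observations cleanly.
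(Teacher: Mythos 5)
Your proof is correct and follows essentially the same route as the paper: decompose into Sylow components (which are $\mathbb ZG$-submodules since they are characteristic), sum the generators across primes, and recover each component by multiplying by a suitable integer obtained from coprimality of the Sylow orders. Your choice of multiplier $m_pN_p$ with $N_p=\prod_{q\neq p}|A_q|$ is in fact a slightly cleaner instance of the Chinese-remainder trick the paper uses, since it visibly annihilates every $A_q$ with $q\neq p$ regardless of the orders of the individual generators.
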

\begin{proof}
The direct sum decomposition comes from the structure theory of finite abelian groups.  If $A$ is $k$-generated, then so are each of its quotients $A_p$.  Conversely, if each $A_p$ is $k$-generated, then we can find, for each prime divisor $p$ of $|A|$, elements $a_{1,p},\ldots, a_{k,p}$ generating $A_p$ as a $\mathbb ZG$-module.  Put $a_i = \sum_{p\mid |A|}a_{i,p}$.  Then $A$ is generated by $a_1,\ldots, a_k$ as a $\mathbb ZG$-module.  Indeed, by the Chinese remainder theorem, we can find $n_p\in \mathbb Z$ such that $n_p\equiv 1\bmod {o(a_{i,p})}$ (where $o(a)$ denotes the order of $a$) and $n_p\equiv 0\bmod q$ if $q$ is a prime divisor of $|A|$ different than $p$.  Then $n_pa_i =a_{i,p}$, and so $a_1,\ldots, a_k$ generate $A$ as a $\mathbb ZG$-module.
\end{proof}

Putting everything together, we arrive at the following proposition.

\begin{Prop}\label{p:kgen.soc}
Let $G$ be a finite group, $N\lhd G$ a normal subgroup and $k\geq 1$.  The following are equivalent.
\begin{enumerate}
\item $S(G)\cap N$ is $k$-generated as a normal subgroup.
\item $A(G)\cap N$ is $k$-generated as a normal subgroup of $G$.
\item $A(G)\cap N$ is $k$-generated as a $\mathbb ZG$-module.
\item $A(G)_p\cap N$ is $k$-generated as an $\mathbb F_pG$-module for each prime divisor $p$ of $|A(G)\cap N|$.
\end{enumerate}
\end{Prop}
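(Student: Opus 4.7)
The plan is to dispatch the easy equivalences $(2)\Leftrightarrow(3)\Leftrightarrow(4)$ first and then focus on $(1)\Leftrightarrow(2)$, which contains the real content. For $(2)\Leftrightarrow(3)$, since $A(G)\cap N$ is an abelian normal subgroup of $G$, normal generation coincides with $\mathbb ZG$-module generation, as noted immediately before Proposition~\ref{p:finite.zgmod.gen}. For $(3)\Leftrightarrow(4)$, I would feed the Sylow decomposition $A(G)\cap N=\bigoplus_p A(G)_p\cap N$ from Proposition~\ref{p:interact.with.normal}(3)--(4) into Proposition~\ref{p:finite.zgmod.gen}, noting that each summand $A(G)_p\cap N=(A(G)\cap N)_p$ is elementary abelian and so $\mathbb ZG$-generation on it coincides with $\mathbb F_pG$-generation.

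The direction $(1)\Rightarrow(2)$ is immediate from Proposition~\ref{p:interact.with.normal}(1): the decomposition $S(G)\cap N=(A(G)\cap N)\times(T(G)\cap N)$ has both factors normal in $G$, so the projection onto the first factor is $G$-equivariant and carries any set of normal generators of the product to normal generators of $A(G)\cap N$.

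The real work is $(2)\Rightarrow(1)$. The plan is to first show that $T(G)\cap N=\prod_{i\in X}T_i$ (from Proposition~\ref{p:interact.with.normal}(2)) is singly normally generated in $G$, and then absorb that generator into one of the $k\geq 1$ normal generators of $A(G)\cap N$. For the first step, pick $t_i\in T_i\setminus\{1\}$, set $t_0=\prod_{i\in X}t_i$, and let $H$ be the normal closure of $t_0$ in $G$. The $G$-equivariant coordinate projection sends $H$ onto a nontrivial normal subgroup of $G$ inside $T_i$, hence onto all of $T_i$ by minimal normality. By Proposition~\ref{p:complement}, either $T_i\subseteq H$ or $T_i\cap H=\{1\}$; in the second case $[T_i,H]\subseteq T_i\cap H=\{1\}$ (both being normal in $G$), so the image of $H$ in $T_i$ lies in $Z(T_i)=\{1\}$ by Theorem~\ref{t:socle.struct}(2), contradicting surjectivity. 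Hence $H=T(G)\cap N$. For the absorption, let $a_1,\ldots,a_k$ normally generate $A(G)\cap N$, set $s_1=a_1t_0$ and $s_j=a_j$ for $j\geq 2$, and let $H'$ be the normal closure of $\{s_1,\ldots,s_k\}$ in $G$. Since $A(G)$ centralizes $T(G)$ (both are normal in $G$ with trivial intersection by Theorem~\ref{t:socle.struct}(3)), a short commutator calculation gives $[u,s_1]=[u,t_0]$ for $u\in T(G)$; rerunning the complement/centralizer argument with $H'$ in place of $H$ then shows $T_i\subseteq H'$ for each $i\in X$, so $t_0\in H'$, hence $a_1=s_1t_0\inv\in H'$, and therefore $H'\supseteq(A(G)\cap N)\cdot(T(G)\cap N)=S(G)\cap N$. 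The essential obstacle is the singly-generated step for $T(G)\cap N$, where the triviality of $Z(T_i)$ is the indispensable ingredient; once that is in hand, the absorption is routine thanks to the commuting of $A(G)$ and $T(G)$.
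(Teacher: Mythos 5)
Your proof is correct and follows essentially the same route as the paper: the easy equivalences are dispatched identically, and the substantive direction $(2)\Rightarrow(1)$ rests on the same two ingredients, namely absorbing a product $t_0=\prod_i t_i$ of nontrivial elements of the nonabelian factors into the generator $a_1$ and then using $Z(T_i)=\{1\}$ together with Proposition~\ref{p:complement} via the commutator $[u,a_1t_0]=[u,t_i]$ to recover each $T_i$. Your preliminary step showing that $t_0$ alone normally generates $T(G)\cap N$ is a harmless redundancy, since the absorption step reproves it in the course of handling $H'$.
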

\begin{proof}
Since $S(G)\cap N$ is the internal direct product $(A(G)\cap N)\times (T(G)\cap N)$ by Proposition~\ref{p:interact.with.normal}, if $S(G)\cap N$ is $k$-generated as a normal subgroup, then the same is true for $A(G)\cap N$ by considering the $(A(G)\cap N)$-components of the generators of $S(G)\cap N$.  So (1) implies (2).  Trivially (2) is equivalent to (3).  Since $A(G)_p\cap N$ is $k$-generated as a $\mathbb ZG$-module if and only if it is $k$-generated as an $\mathbb F_pG$-module, (3) is equivalent to (4) by Propositions~\ref{p:interact.with.normal} and~\ref{p:finite.zgmod.gen}.  It remains to show that (2) implies (1).

By Proposition~\ref{p:interact.with.normal}(1), there is nothing to prove if $T(G)\cap N=\{1\}$; so assume that this is not the case and
let $a_1,\ldots, a_k$ generate $A(G)\cap N$ as a normal subgroup.  Write $T(G)\cap N=T_1\times\cdots \times T_s$  (internal direct product) where $T_1,\ldots, T_s$ are nonabelian minimal normal subgroups of $G$ using Proposition~\ref{p:interact.with.normal}.  Choose $t_i\in T_i\setminus \{1\}$ for $i=1,\ldots, s$.  Put $a=a_1t_1\cdots t_s$.  We claim that $a,a_2,\cdots a_k$ generate $S(G)\cap N$ as a normal subgroup. Let $M$ be the normal subgroup generated by $a,a_2,\ldots, a_k$.  Since $Z(T_i)=\{1\}$, we can find $u_i\in T_i$ with $t_iu_it_i\inv u_i\inv\neq 1$.  Then $t_iu_it_i\inv u_i\inv=au_ia\inv u_i\inv\in T_i\setminus \{1\}\cap M$. It follows that $T_i\leq M$ by Proposition~\ref{p:complement}.  We conclude that $T(G)\cap N\leq M$ as $i$ was arbitrary.  But then $a_1=a(t_1\cdots t_s)\inv\in M$ and so $a_1,\ldots, a_k\in M$.  Thus $A(G)\cap N\leq M$ and so $S(G)\cap N=(A(G)\cap N)\times (T(G)\cap N)\leq M$ (using again Proposition~\ref{p:interact.with.normal}), as required.
\end{proof}

\subsection{The character module}
Let $F$ be an algebraically closed field.  If $A$ is a finite abelian group, the \emph{character group} (or Pontryagin dual) of $A$ over $F$ is the abelian group of characters $\Hom_{\mathbb Z}(A,F^\times)$ under pointwise multiplication.  There is a natural homomorphism $\eta\colon A\to \wh{(\wh{A})}$ given by $\eta(a)(f)=f(a)$.  The following proposition is a well-known consequence of the structure theorem for finite abelian groups.  We include a proof for completeness.

\begin{Prop}\label{p:Pont.duality}
Let $A$ be a finite abelian group and $F$ an algebraically closed field whose characteristic does not divide $|A|$.
\begin{enumerate}
  \item $\eta\colon A\to \wh{(\wh A)}$ is an isomorphism.
  \item $A\cong \widehat{A}$ as abelian groups.
  \item Let $B\leq \wh A$ be a subgroup.  Then $B=\wh{A}$ if and only if $\bigcap_{f\in B} \ker f$ is trivial.
\end{enumerate}
\end{Prop}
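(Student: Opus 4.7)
My plan is to reduce everything to the cyclic case via the structure theorem for finite abelian groups, prove the statement for cyclic groups by hand using the hypothesis on $F$, and then derive (3) from (1) and (2) by a standard duality argument.

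First I would establish (2). Characters of a direct product factor as products of characters of the factors, so $\wh{A\oplus B}\cong \wh A\oplus \wh B$ canonically. Thus it suffices to prove $\wh A\cong A$ when $A=\mathbb Z/n\mathbb Z$ is cyclic of order $n$, and here the hypothesis on $F$ is exactly what is needed: a character of $\mathbb Z/n\mathbb Z$ is determined by its value on a generator, and that value can be an arbitrary $n$-th root of unity in $F$. Since $\mathrm{char}(F)\nmid n$, the polynomial $x^n-1$ is separable over $F$, and since $F$ is algebraically closed, it has $n$ roots in $F^\times$, which form a cyclic group of order $n$. Evaluation at a generator then gives an isomorphism $\wh{\mathbb Z/n\mathbb Z}\cong \mu_n(F)\cong \mathbb Z/n\mathbb Z$.

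Next I would prove (1). The map $\eta$ is a well-defined homomorphism of finite abelian groups, and by (2) applied twice, $|\wh{(\wh A)}|=|\wh A|=|A|$, so it suffices to show that $\eta$ is injective, i.e., that for every nontrivial $a\in A$ there exists $f\in\wh A$ with $f(a)\neq 1$. Decomposing $A$ as a direct sum of cyclic groups $\langle g_1\rangle\oplus\cdots\oplus\langle g_r\rangle$ and writing $a=g_1^{k_1}\cdots g_r^{k_r}$, some factor $g_i^{k_i}$ is nontrivial; by the cyclic case handled in the previous paragraph, there is a character $f_i$ of $\langle g_i\rangle$ with $f_i(g_i^{k_i})\neq 1$, and extending $f_i$ by the trivial character on the other summands gives the desired $f$.

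Finally, (3) will follow formally from (1) and (2). If $B=\wh A$, then injectivity of $\eta$ in (1) says exactly that $\bigcap_{f\in\wh A}\ker f=\{1\}$. Conversely, suppose $B\lneq \wh A$; I would produce a nontrivial element of $\bigcap_{f\in B}\ker f$. The quotient $\wh A/B$ is a nontrivial finite abelian group, so by (2) applied to $\wh A$ it admits a nontrivial character, which pulls back to a nontrivial $\phi\in\wh{(\wh A)}$ vanishing on $B$. By (1), $\phi=\eta(a)$ for some $a\neq 1$ in $A$, and then $f(a)=\eta(a)(f)=\phi(f)=1$ for every $f\in B$, so $a$ lies in $\bigcap_{f\in B}\ker f$. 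The main (and only mild) obstacle is the cyclic case of (2), where the hypothesis that $\mathrm{char}(F)$ does not divide $|A|$ is genuinely used; everything else is formal.
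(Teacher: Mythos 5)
Your proposal is correct and follows essentially the same route as the paper: reduce (2) to the cyclic case using the structure theorem and the fact that $\mu_n\leq F^\times$ has order $n$ when $\mathrm{char}(F)\nmid n$, deduce (1) from (2) plus separation of points by characters, and obtain (3) by pulling back a nontrivial character of $\wh A/B$ through $\eta$. The only cosmetic difference is that you separate points by projecting onto a cyclic direct summand while the paper uses a surjection onto a cyclic quotient, which amounts to the same thing.
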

\begin{proof}
Let $p\geq 0$ be the characteristic of $F$.
If $C_n$ is a cyclic group of order $n$ and $p\nmid n$, then the $n^{th}$-roots of unity form a cyclic subgroup $\mu_n$ of $F^\times$ of order $n$, and so $\wh{C_n} = \Hom_{\mathbb Z}(C_n,\mu_n)\cong \Hom_{\mathbb Z}(C_n,C_n)\cong C_n$. This proves (2) for cyclic groups and it then follows in general by the structure theorem for finite abelian groups. In light of (2), we just need injectivity of $\eta$ to prove (1).  Let $0\neq a\in A$.  By the structure theorem for finite abelian groups, there is a surjective homomorphism $f\colon A\to C_n$ with $f(a)\neq 0$ and $C_n$ cyclic of order $n$.  But then there is an isomorphism $g\colon C_n\to \mu_n$ (since $n\mid |A|$), and so $gf\in \wh{A}$ with $\eta(a)(gf)= g(f(a))\neq 1$.  Finally, for (3), it follows from (1) that $\bigcap_{f\in \wh A}\ker f=\{0\}$.  If $B<\wh A$ is a proper subgroup, then $\wh A/B$ is a nontrivial finite abelian group whose order is not divisible by $p$ (by (2)).  Hence, by (2), we can find a nontrivial homomorphism $\alpha\colon \wh A/B\to F^\times$. Let $\pi\colon \wh A\to \wh A/B$ be the projection.  Then $\alpha\pi\in \wh{(\wh A)}$ is nontrivial and so there exists $0\neq a\in A$ with $\eta(a) = \alpha\pi$.  But  if $f\in B$, then $f(a)=\alpha\pi(f)=1$ and so $0\neq a\in \bigcap_{f\in B}\ker f$, as required.  This completes the proof.
\end{proof}

As a consequence, we obtain the following criterion for generation of $\wh A$.

\begin{Cor}\label{c:generate}
Let $A$ be a finite abelian group and $F$ an algebraically closed field whose characteristic does not divide $|A|$.  Then $f_1,\ldots, f_r$ generate $\wh A$ if and only if $\bigcap_{i=1}^r\ker f_i$ is trivial.
\end{Cor}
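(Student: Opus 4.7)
The plan is to reduce the statement directly to part (3) of Proposition~\ref{p:Pont.duality}. Let $B$ denote the subgroup of $\wh A$ generated by $f_1,\ldots,f_r$. By that proposition, $B = \wh A$ if and only if $\bigcap_{f \in B} \ker f$ is trivial, so the corollary will follow once I check that this intersection equals $\bigcap_{i=1}^r \ker f_i$.

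The inclusion $\bigcap_{f \in B} \ker f \subseteq \bigcap_{i=1}^r \ker f_i$ is immediate because each $f_i$ belongs to $B$. For the reverse inclusion, note that every element of $B$ can be written as a finite product of the $f_i$ and their inverses (since $\wh A$ is abelian). Therefore, if $a \in \bigcap_{i=1}^r \ker f_i$, then $f_i(a) = 1$ for all $i$, and evaluating any such product at $a$ gives $1$. Hence $a \in \ker f$ for every $f \in B$.

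Combining these two observations with Proposition~\ref{p:Pont.duality}(3) yields the equivalence. There is no real obstacle here; the only subtlety is simply noticing that the condition ``$\bigcap_{f \in B} \ker f$ trivial'' from the proposition depends only on a generating set of $B$, since characters are multiplicative.
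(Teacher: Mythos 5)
Your proof is correct and follows the same route as the paper: reduce to Proposition~\ref{p:Pont.duality}(3) by showing $\bigcap_{f\in B}\ker f=\bigcap_{i=1}^r\ker f_i$ for $B=\langle f_1,\ldots,f_r\rangle$. The only cosmetic difference is that you verify the reverse inclusion by writing elements of $B$ as products of the $f_i^{\pm 1}$, whereas the paper observes that $\{f\in\wh A\mid f(a)=1\}$ is the kernel of $\eta(a)$ and hence a subgroup; these are the same argument.
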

\begin{proof}
Let $B=\langle f_1,\ldots, f_r\rangle$.   We claim that $\bigcap_{f\in B}\ker f=\bigcap_{i=1}^r\ker f_i$.  Trivially, $\bigcap_{f\in B}\ker f\leq \bigcap_{i=1}^r\ker f_i$.  But if $a\in A$, then the set of $f\in \wh A$ with $f(a)=1$ is the kernel of $\eta(a)$, hence a subgroup, and so if $a\in \bigcap_{i=1}^r\ker f_i$, then $a\in \bigcap_{f\in B}\ker f$.  The result now follows from Proposition~\ref{p:Pont.duality}(3).
\end{proof}

If $G$ is a finite group and $A$ is a finite $\mathbb ZG$-module, then $\wh{A}$ becomes a $\mathbb ZG$-module via the action $(gf)(a)= f(g\inv a)$ for $f\in \wh A$, $a\in A$ and $g\in G$.  We call this the \emph{character module} of $A$.

Again, let $G$ be a finite group, $N\lhd G$ a normal subgroup and assume from now on that $F$ is an algebraically closed field whose characteristic does not divide $|A(G)\cap N|$.  As usual,  view $A(G)\cap N$ as a $\mathbb ZG$-module via the conjugation action.

\begin{Prop}\label{p:sylows.dual}
If $p\mid |A(G)\cap N|$ is prime, then $(\wh{A(G)\cap N})_p= \wh {A(G)_p\cap N}$.  Moreover, \[\wh{A(G)\cap N}= \bigoplus_{p\mid |A(G)\cap N|} \wh {A(G)_p\cap N}\] as $\mathbb ZG$-modules, and this is the decomposition into Sylow subgroups. Furthermore, $\wh{A(G)\cap N}_p$ is an $\mathbb F_pG$-module.
\end{Prop}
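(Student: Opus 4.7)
The plan is to reduce everything to the decomposition of $A:=A(G)\cap N$ into its Sylow components and apply the Hom functor. By Proposition~\ref{p:interact.with.normal}(4), we already have $A=\bigoplus_{p\mid |A|} A(G)_p\cap N$ as $\mathbb ZG$-modules, and by Proposition~\ref{p:interact.with.normal}(3) each summand is an elementary abelian $p$-group (in fact an $\mathbb F_pG$-module). The strategy is then: (i) dualize this decomposition to get a $\mathbb ZG$-module decomposition of $\wh A$; (ii) identify each dual summand as the corresponding Sylow subgroup of $\wh A$; (iii) observe the $\mathbb F_pG$-structure comes along for free.

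First I would apply $\Hom_{\mathbb Z}(-,F^\times)$ to the direct sum decomposition. Since Hom turns finite direct sums into direct products, and the index set is finite so products and sums coincide, one gets
\[
\wh A = \Hom_{\mathbb Z}\!\Bigl(\textstyle\bigoplus_{p}A(G)_p\cap N,\;F^\times\Bigr) \;\cong\; \bigoplus_{p\mid|A|}\wh{A(G)_p\cap N}.
\]
This isomorphism is $G$-equivariant because each $A(G)_p\cap N$ is a $\mathbb ZG$-submodule of $A$, so the action $(gf)(a)=f(g\inv a)$ preserves the components; concretely, the projection $\wh A\to \wh{A(G)_p\cap N}$ is just restriction of characters, which commutes with the conjugation action.

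Next I would identify the Sylow pieces. The hypothesis that $\operatorname{char} F$ does not divide $|A|$ implies it does not divide $|A(G)_p\cap N|$ for any relevant $p$, so Proposition~\ref{p:Pont.duality}(2) gives $\wh{A(G)_p\cap N}\cong A(G)_p\cap N$ as abelian groups. Since $A(G)_p\cap N$ is (elementary) abelian of $p$-power order by Proposition~\ref{p:interact.with.normal}(3), so is $\wh{A(G)_p\cap N}$. Thus the displayed decomposition of $\wh A$ above is exactly the decomposition into Sylow subgroups, whence $(\wh A)_p = \wh{A(G)_p\cap N}$ for each prime $p\mid|A|$.

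Finally, because $A(G)_p\cap N$ has exponent $p$, the isomorphism $\wh{A(G)_p\cap N}\cong A(G)_p\cap N$ forces $\wh{A(G)_p\cap N}$ to have exponent $p$ as well, so it is an $\mathbb F_p$-vector space; together with the $G$-action from the character module structure, this makes it an $\mathbb F_pG$-module. I do not anticipate any real obstacle here: the content is essentially that the Hom functor respects the Sylow decomposition and that Pontryagin duality preserves exponent for finite abelian groups in good characteristic. The only point requiring mild care is verifying the $G$-equivariance of the decomposition, which follows immediately from the fact that the Sylow components are $\mathbb ZG$-submodules.
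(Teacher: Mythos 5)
Your proposal is correct and follows essentially the same route as the paper: dualize the Sylow decomposition of $A(G)\cap N$ from Proposition~\ref{p:interact.with.normal}, use Proposition~\ref{p:Pont.duality}(2) to see each dual summand is a $p$-group (hence the decomposition is into Sylow subgroups), and conclude the elementary abelian structure gives the $\mathbb F_pG$-module claim. The extra detail you give on $G$-equivariance of the restriction maps is a welcome but minor elaboration of what the paper leaves implicit.
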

\begin{proof}
In light of the direct sum decomposition \[A(G)\cap N= \bigoplus_{p\mid |A(G)|} A(G)_p\cap N\] into Sylow subgroups from Proposition~\ref{p:interact.with.normal}, we have that $\wh{A(G)\cap N}=\bigoplus_{p\mid |A(G)|} \wh{A(G)_p\cap N}$ as $\mathbb ZG$-modules.  Since we are assuming that the characteristic of $F$ does not divide $|A(G)\cap N|$, we have that $\wh{A(G)\cap N}\cong A(G)\cap N$ and $\wh{A(G)_p\cap N}\cong A(G)_p\cap N$ by Proposition~\ref{p:Pont.duality}(2), and so we deduce that $(\wh{A(G)\cap N})_p= \wh {A(G)_p\cap N}$ and the direct sum decomposition is into Sylow subgroups.  Since $A(G)_p\cap N$ is an elementary abelian $p$-group, so is the isomorphic group $\wh{A(G)_p\cap N}$, and hence it is an $\mathbb F_pG$-module.
\end{proof}

Our next goal is to show that $\wh{A(G)_p\cap N}$ is isomorphic to the contragredient $\mathbb F_pG$-module of $A(G)_p\cap N$.  Recall that if $K$ is a field and $V$ is $KG$-module, then the \emph{contragredient} $KG$-module is $V^*=\Hom_K(V,K)$ with module structure given by $(gf)(v) = f(g\inv v)$ for $f\in V^*$, $g\in G$ and $v\in V$.  If $V$ is finite dimensional, then $V\cong (V^*)^*$ as a $KG$-module via the double dual map $\eta\colon V\to (V^*)^*$ given by $\eta(v)(f)=f(v)$.

\begin{Prop}\label{p:character.contra}
If $F$ is an algebraically closed field whose characteristic does not divide $|A(G)\cap N|$, then $\wh{A(G)_p\cap N}\cong (A(G)_p\cap N)^*$ as $\mathbb F_pG$-modules for any prime $p$ dividing $|A(G)\cap N|$.
\end{Prop}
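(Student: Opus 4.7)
The plan is to identify $\wh{V}$ and $V^*$ directly, where $V = A(G)_p\cap N$, by exploiting the fact that $V$ is an elementary abelian $p$-group.

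First I would observe that because every nonzero element of $V$ has order $p$, the image of any character $f\colon V\to F^\times$ must land inside the group $\mu_p\subseteq F^\times$ of $p$-th roots of unity. So $\wh V = \Hom_{\mathbb Z}(V,\mu_p)$. Since $p\mid |A(G)\cap N|$ and the characteristic of $F$ does not divide $|A(G)\cap N|$, we have $\mathrm{char}(F)\neq p$; combined with $F$ being algebraically closed, this forces $\mu_p$ to be cyclic of order exactly $p$. I would then fix an isomorphism $\phi\colon \mu_p\xrightarrow{\sim} \mathbb F_p$ of abelian groups (both are cyclic of order $p$).

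Next, I would define a map $\Phi\colon \wh V\to V^*$ by $\Phi(f)=\phi\circ f$. The inverse $\Phi^{-1}(h)=\phi^{-1}\circ h$ shows $\Phi$ is a bijection, and it is manifestly additive. An additive map from an $\mathbb F_p$-vector space to an $\mathbb F_p$-vector space is automatically $\mathbb F_p$-linear, so $\Phi$ is an isomorphism of $\mathbb F_p$-vector spaces. Here I should note that the codomain of $\Phi(f)$ genuinely lies in $\Hom_{\mathbb F_p}(V,\mathbb F_p) = V^*$, again because additivity on an elementary abelian $p$-group is the same as $\mathbb F_p$-linearity.

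Finally, I would check $G$-equivariance, which is the crucial point but essentially a tautology: the $G$-action on both $\wh V$ and $V^*$ is given by the identical formula $(gf)(v)=f(g^{-1}v)$, acting only on the argument and not on the codomain. Hence
\[
\Phi(gf)(v)=\phi(f(g^{-1}v))=\Phi(f)(g^{-1}v)=(g\Phi(f))(v),
\]
so $\Phi$ is a morphism of $\mathbb F_p G$-modules and the proof is complete. There is no real obstacle here; the only subtlety is making sure that the choice of $\phi$ does not affect $G$-equivariance, which is automatic because $G$ acts trivially on the coefficient groups $\mu_p$ and $\mathbb F_p$ in both module structures.
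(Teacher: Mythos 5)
Your proof is correct and follows essentially the same route as the paper: both identify $\wh{A(G)_p\cap N}=\Hom_{\mathbb Z}(A(G)_p\cap N,\mu_p)$ using that the group is elementary abelian, fix an isomorphism $\mu_p\cong\mathbb F_p$ (valid since $F$ is algebraically closed of characteristic $\neq p$), and observe that the resulting bijection is $G$-equivariant because the $G$-action in both module structures only touches the argument. Your explicit verification of equivariance is just an unwinding of what the paper calls ``functoriality of $\Hom$,'' so there is nothing substantively different here.
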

\begin{proof}
Let $\mu_p$ the a group of $p^{th}$-roots of unity in $F^\times$.  Since the characteristic of $F$ is not $p$, this is a cyclic group of order $p$.  Then, as abelian groups, we have $\wh{A(G)_p\cap N} = \Hom_{\mathbb Z}(A(G)_p\cap N,F^\times) =\Hom_{\mathbb Z}(A(G)_p\cap N,\mu_p)\cong \Hom_{\mathbb Z}(A(G)_p\cap N,\mathbb F_p)=\Hom_{\mathbb F_p}(A(G)_p\cap N,\mathbb F_p)$ via identification of $\mu_p$ with the additive group of $\mathbb F_p$.  But this identification clearly preserves the $G$-action by functoriality of $\Hom$, and so $\wh{A(G)_p\cap N}\cong (A(G)_p\cap N)^*$ as $\mathbb F_pG$-modules.
\end{proof}

We now study the relationship between a $KG$-module and its contragredient.  In particular, we will show that if $V$ is a finite dimensional semisimple $KG$-module, then $V^*$ is also semisimple and $V$ is $k$-generated if and only if $V^*$ is $k$-generated.  The argument  essentially follows that of Szechtman~\cite{Szechtman} for the special case of cyclic modules.

\begin{Prop}\label{p:contra.simple}
Let $V$ be a simple $KG$-module.  Then $V^*$ is also simple.  Hence if $W\cong V_1\oplus\cdots \oplus V_r$ is a semisimple $KG$-module with the $V_i$ simple, then $W^*\cong V_1^*\oplus\cdots \oplus V_r^*$ is semisimple with the $V_i^*$ simple.
\end{Prop}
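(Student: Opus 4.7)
The plan is to prove simplicity of $V^*$ via the classical perpendicular (annihilator) correspondence between subspaces of $V$ and $V^*$. First I would note that since $G$ is finite and $V$ is simple, $V$ is finite dimensional (it is a quotient of $KG$, generated as a module by any nonzero vector). So the double dual identification $\eta\colon V\to (V^*)^*$ is an isomorphism of $KG$-modules, and the assignment $U\mapsto U^{\perp}:=\{v\in V\mid f(v)=0 \text{ for all } f\in U\}$ is an inclusion-reversing bijection between $K$-subspaces of $V^*$ and $K$-subspaces of $V$, with $\dim U+\dim U^\perp=\dim V$.

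Next I would verify that this correspondence respects the $KG$-module structure: if $U$ is a $KG$-submodule of $V^*$, then for $v\in U^\perp$, $g\in G$ and $f\in U$ one has $f(gv)=(g\inv f)(v)=0$, since $g\inv f\in U$. Hence $U^\perp$ is a $KG$-submodule of $V$. Now suppose $U\subseteq V^*$ is a nonzero $KG$-submodule. Then $U^\perp$ is a proper $KG$-submodule of $V$ (since $U\neq 0$), so simplicity of $V$ forces $U^\perp=0$, and dimension count gives $U=V^*$. Thus $V^*$ is simple.

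For the semisimple assertion, I would use the natural map $W^*\to V_1^*\oplus\cdots\oplus V_r^*$ sending $f\mapsto (f|_{V_1},\ldots,f|_{V_r})$, which is a $KG$-module isomorphism (the $KG$-action is preserved because the direct sum decomposition of $W$ is a decomposition of $KG$-modules, so restriction commutes with the $G$-action). Applying the first part of the proposition to each $V_i$ then shows each $V_i^*$ is simple, so $W^*$ is a direct sum of simple $KG$-modules and hence semisimple.

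The main obstacle is essentially only bookkeeping: checking that the perpendicular construction and the dual-of-a-direct-sum isomorphism are $G$-equivariant with respect to the contragredient action $(gf)(v)=f(g\inv v)$. Once this is in place, the result reduces to the standard finite-dimensional linear-algebra duality between subspaces of $V$ and $V^*$.
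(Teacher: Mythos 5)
Your proposal is correct and follows essentially the same route as the paper: the paper also takes a submodule $U'\leq V^*$, forms $\bigcap_{f\in U'}\ker f$ (your $U'^{\perp}$), checks $G$-invariance by the same computation $f(gu)=(g\inv f)(u)$, and invokes simplicity of $V$; the only cosmetic difference is that you finish with the dimension count $\dim U+\dim U^\perp=\dim V$ where the paper instead uses the double dual isomorphism to see that $U'^\perp=\{0\}$ forces $U'=V^*$. Both are the same standard duality fact, so no substantive difference.
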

\begin{proof}
Let $U'$ be a $KG$-submodule of $V^*$.  Let $U=\bigcap_{f\in U'}\ker f$.  Then $U$ is a subspace of $V$.  In fact, it is a $KG$-submodule for if $u\in U$, $g\in G$ and $f\in U'$, then $f(gu) = (g\inv f)(u) =0$ as $g\inv f\in U'$.  Thus $U=\{0\}$ or $U=V$.  If $U=V$, then trivially $U'=\{0\}$. If $U=\{0\}$, then since the double dual map $\eta\colon V\to (V^*)^*$ is an isomorphism, we deduce that the only element of $(V^*)^*$ vanishing on $U'$ is the zero functional and so $U'=V^*$.  Thus $V^*$ is simple.  The remainder of the proposition is straightforward from the decomposition $(V_1\oplus\cdots \oplus V_r)^* = V_1^*\oplus\cdots \oplus V_r^*$ as $\Hom$ preserves finite direct sums.
\end{proof}

Next we connect the endomorphism rings of $V$ and $V^*$.  We use $R^{op}$ for the opposite ring of a ring $R$.   Notice that if $B$ is a $K$-algebra, then $B^{op}$ is a $K$-algebra of the same dimension since $K$ is in the center of $B$.

\begin{Prop}\label{p:end.contra}
Let $V$ be a finite dimensional $KG$-module.  Then we have $\End_{KG}(V^*)\cong \End_{KG}(V)^{op}$.
\end{Prop}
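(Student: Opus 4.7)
The plan is to construct the isomorphism explicitly via the transpose/dual map. Given $\varphi \in \End_{KG}(V)$, define $\varphi^\vee \colon V^* \to V^*$ by $\varphi^\vee(f) = f\circ \varphi$ for $f \in V^*$. This is clearly $K$-linear, and I will use the assignment $\Phi\colon \End_{KG}(V)^{op} \to \End_{KG}(V^*)$ given by $\Phi(\varphi) = \varphi^\vee$ as our candidate isomorphism.

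First I would verify that $\varphi^\vee$ is in fact $KG$-linear with respect to the contragredient action. For $g \in G$, $f \in V^*$ and $v \in V$, on one hand $(g\cdot \varphi^\vee(f))(v) = \varphi^\vee(f)(g\inv v) = f(\varphi(g\inv v))$, and on the other hand $\varphi^\vee(g\cdot f)(v) = (g\cdot f)(\varphi(v)) = f(g\inv \varphi(v))$; these agree because $\varphi$ commutes with the $G$-action on $V$. Next, $\Phi$ is multiplicative into the opposite ring: for $\varphi, \psi \in \End_{KG}(V)$, $(\varphi\psi)^\vee(f) = f\circ \varphi\psi = \psi^\vee(\varphi^\vee(f))$, i.e., $(\varphi\psi)^\vee = \psi^\vee \circ \varphi^\vee$, which is exactly the product in $\End_{KG}(V^*)$ corresponding to the opposite product in $\End_{KG}(V)$. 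Additivity and $K$-linearity of $\Phi$ are immediate.

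It remains to show $\Phi$ is a bijection. Since $V$ is finite dimensional, the double dual map $\eta\colon V\to (V^*)^*$ given by $\eta(v)(f) = f(v)$ is a $KG$-module isomorphism (an easy direct check shows $\eta(gv) = g\cdot \eta(v)$). Applying the same construction to $V^*$ yields an anti-homomorphism $\End_{KG}(V^*)^{op} \to \End_{KG}((V^*)^*)$, and transporting back through $\eta$ produces a two-sided inverse of $\Phi$: explicitly, given $\psi \in \End_{KG}(V^*)$, the map $\eta\inv \circ \psi^\vee \circ \eta$ lies in $\End_{KG}(V)$ and is easily seen to be sent to $\psi$ by $\Phi$, and conversely $\Phi$ followed by this assignment is the identity on $\End_{KG}(V)$. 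Hence $\Phi$ is a ring isomorphism.

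No serious obstacle is expected; everything is standard finite-dimensional duality and the only thing to keep track of is the compatibility of the contragredient $G$-action with composition, which forces the appearance of the opposite ring. The $K$-algebra remark preceding the proposition simply ensures that $\End_{KG}(V)^{op}$ is a meaningful $K$-algebra of the same dimension, so there is no issue in interpreting the statement.
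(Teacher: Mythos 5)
Your proposal is correct and follows essentially the same route as the paper: both define the isomorphism via the dual (transpose) map, verify its $KG$-equivariance by the same direct computation with the contragredient action, note the order-reversal $(T_1T_2)^*=T_2^*T_1^*$ forcing the opposite ring, and obtain bijectivity by applying the same construction to $V^*$ and transporting through the double dual isomorphism $\eta$. No gaps.
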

\begin{proof}
We claim that $T\colon V\to V$ is in $\End_{KG}(V)$ if and only if the dual map $T^*\colon V^*\to V^*$ belongs to $\End_{KG}(V^*)$.  Since $(T^*)^*=T$ under the natural identification of $(V^*)^*$ with $V$ via the double dual map $\eta$ and $(T_1T_2)^*=T_2^*T_1^*$, the result will follow.  Suppose first that $T\in \End_{KG}(V)$.  Then, for $g\in G$, $f\in V^*$ and $v\in V$, we have $T^*(gf)(v) = (gf)(Tv) = f(g\inv Tv) = f(T(g\inv v)) = T^*f(g\inv v) = g(T^*f)(v)$ and so $T^*(gf) = gT^*f$.  The converse follows from this case by placing $V^*$ (respectively, $T^*$) in the role of $V$ (respectively, $T$) and identifying $T$ with $(T^*)^*$ under the isomorphism $\eta\colon V\to (V^*)^*$.
\end{proof}

The following result holds for any finite dimensional algebra.

\begin{Prop}\label{p:kgen.ss.mod}
Let $A$ be a finite dimensional $K$-algebra.  Then a finite dimensional semisimple $A$-module $V$ is $k$-generated if and only if, for each simple $A$-module $S$, the multiplicity of $S$ as a composition factor of $V$ is at most $k\cdot \dim_D S = k\cdot \dim_K S/\dim_K D$ where $D=\End_A(S)$, which is a finite dimensional division algebra over $K$.
\end{Prop}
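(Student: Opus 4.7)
The plan is to reduce to the semisimple-algebra case via the Jacobson radical and then combine Wedderburn--Artin theory with a standard comparison of isotypic components. First I would set $\bar A = A/\rad(A)$. Since $\rad(A)$ annihilates every simple $A$-module, it annihilates every semisimple $A$-module, so $V$ is naturally a $\bar A$-module and a subset of $V$ generates it over $A$ if and only if it generates it over $\bar A$. Hence we may assume throughout that $A$ is semisimple.

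Under this assumption, Wedderburn--Artin provides a decomposition of the left regular module $A \cong \bigoplus_{S} S^{n_S}$, where $S$ runs over a transversal of the isomorphism classes of simple $A$-modules, $D_S = \End_A(S)$ (a finite-dimensional division $K$-algebra by Schur's lemma), and $n_S = \dim_{D_S} S$. The equality $n_S = \dim_K S/\dim_K D_S$ follows from $S \cong D_S^{n_S}$ as a (right) $D_S$-module together with $\dim_K S = \dim_{D_S} S \cdot \dim_K D_S$.

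Now $V$ is $k$-generated if and only if there is a surjection $A^k \twoheadrightarrow V$. Writing $V \cong \bigoplus_S S^{m_S}$ and $A^k \cong \bigoplus_S S^{k n_S}$, the problem reduces to the following well-known fact about semisimple modules over a semisimple algebra: a surjection $\bigoplus_S S^{\ell_S} \twoheadrightarrow \bigoplus_S S^{m_S}$ exists if and only if $m_S \leq \ell_S$ for every simple $S$. The ``if'' direction is given by coordinate projections. For the ``only if'' direction, one projects onto the $S$-isotypic component of the target; the resulting surjection $S^{\ell_S} \twoheadrightarrow S^{m_S}$ is a surjection of $D_S$-vector spaces (via the right $D_S$-action from $\End_A(S)$), and so $\ell_S \geq m_S$ by dimension counting over $D_S$. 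Applying this with $\ell_S = k n_S$ yields precisely the condition $m_S \leq k \cdot \dim_{D_S} S$ for every simple $S$.

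No substantive obstacle arises; the proof is essentially bookkeeping on top of Wedderburn--Artin. The only points that require care are the reduction modulo $\rad(A)$ (which uses that a generating set of a semisimple module is the same over $A$ as over $A/\rad(A)$) and the identification of the multiplicity of $S$ in the regular representation with $\dim_{D_S} S$, both of which are standard.
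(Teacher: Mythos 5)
Your proposal is correct and follows essentially the same route as the paper: reduce modulo the Jacobson radical, observe that the $k$-generated semisimple modules are exactly the quotients of $(A/J(A))^k$, and apply Wedderburn--Artin together with Schur's lemma to read off the multiplicity bound $m_S \leq k\cdot\dim_{D_S}S$. The paper leaves the final comparison of multiplicities as "the result follows," whereas you spell it out via isotypic components and $D_S$-dimension counting, but this is only a difference in level of detail, not of method.
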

\begin{proof}
Let $J(A)$ be the Jacobson radical of $A$.  Then the $k$-generated semisimple $A$-modules are the quotients of $(A/J(A))^k$.  By Wedderburn's theorem, $A/J(A)\cong S_1^{n_1}\oplus \cdots \oplus S_r^{n_r}$ where $S_1,\ldots, S_r$ represent the isomorphism classes of simple $A$-modules and $n_i=\dim_{D_i} S_i$, where $D_i=\End_A(S_i)$ is a finite dimensional division algebra over $K$  by Schur's lemma.  Thus \[(A/J(A))^k\cong S_1^{kn_1}\oplus\cdots\oplus S_r^{kn_r}\] and the result follows.
\end{proof}

As a corollary, we obtain that a finitely generated semisimple $KG$-module $V$ and its contragedient have the same number of generators.

\begin{Cor}\label{c:kgen.dual}
Let $V$ be a finite dimensional semisimple $KG$-module.  Then $V$ is $k$-generated if and only if $V^*$ is $k$-generated.
\end{Cor}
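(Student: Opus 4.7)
The plan is to combine the three preceding propositions---Proposition~\ref{p:contra.simple}, Proposition~\ref{p:end.contra}, and Proposition~\ref{p:kgen.ss.mod}---to show that $V$ and $V^*$ satisfy identical numerical criteria for being $k$-generated.

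First I would decompose $V$ into its isotypic components. Since $V$ is finite dimensional and semisimple, write $V\cong S_1^{m_1}\oplus\cdots\oplus S_r^{m_r}$ with the $S_i$ pairwise non-isomorphic simple $KG$-modules and $m_i\geq 0$. By Proposition~\ref{p:contra.simple}, each $S_i^*$ is simple, and because $S_i\cong S_j$ implies $S_i^*\cong S_j^*$ (and conversely, using the double-dual isomorphism), the modules $S_1^*,\ldots, S_r^*$ are also pairwise non-isomorphic simple $KG$-modules. Moreover, $V^*\cong S_1^{*\,m_1}\oplus\cdots\oplus S_r^{*\,m_r}$, so that the multiplicity of $S_i^*$ as a composition factor of $V^*$ equals the multiplicity $m_i$ of $S_i$ in $V$.

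Next I would compare the numerical thresholds appearing in Proposition~\ref{p:kgen.ss.mod} for $V$ and for $V^*$. For $S_i$ the relevant threshold is $k\cdot \dim_K S_i/\dim_K D_i$ with $D_i=\End_{KG}(S_i)$, and for $S_i^*$ it is $k\cdot \dim_K S_i^*/\dim_K D_i'$ with $D_i'=\End_{KG}(S_i^*)$. Since $V$ is finite dimensional, $\dim_K S_i^*=\dim_K S_i$. By Proposition~\ref{p:end.contra} we have $D_i'\cong D_i^{op}$ as $K$-algebras, and the opposite of a finite dimensional $K$-algebra has the same $K$-dimension, so $\dim_K D_i'=\dim_K D_i$. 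Therefore the two thresholds coincide.

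Finally, applying Proposition~\ref{p:kgen.ss.mod} to $V$ and to $V^*$ (recalling that the simple modules appearing in $V^*$ outside the list $S_1^*,\ldots, S_r^*$ occur with multiplicity zero, so impose no constraint), we see that the condition ``$m_i\leq k\cdot \dim_K S_i/\dim_K D_i$ for all $i$'' is simultaneously equivalent to $V$ being $k$-generated and to $V^*$ being $k$-generated. This yields the corollary. No step here looks delicate; the one place to be careful is verifying that distinct isomorphism classes of $S_i$ give distinct classes of $S_i^*$, which follows cleanly from the natural $KG$-module isomorphism $(V^*)^*\cong V$.
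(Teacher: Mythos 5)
Your argument is correct and follows essentially the same route as the paper's own proof: decompose into isotypic components, use Proposition~\ref{p:contra.simple} to dualize the decomposition, and compare the numerical criteria of Proposition~\ref{p:kgen.ss.mod} via Proposition~\ref{p:end.contra} and $\dim_K D^{op}=\dim_K D$. The only cosmetic difference is that you establish both directions at once by matching thresholds, whereas the paper proves one implication and appeals to the double dual for the converse.
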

\begin{proof}
Write $V\cong V_1^{m_1}\oplus\cdots\oplus V_r^{m_r}$ with the $V_i$ pairwise nonisomorphic simple $KG$-modules.  Then by Proposition~\ref{p:contra.simple}, we have $V^*\cong (V_1^*)^{m_1}\oplus\cdots\oplus (V_r^*)^{m_r}$ and $V_i^*$ is simple for $i=1,\ldots, r$, whence $V^*$ is semisimple.  Moreover, $V_i^*\ncong V_j^*$ if $i\neq j$ by consideration of double duals.  Suppose $V$ is $k$-generated.  Then by Proposition~\ref{p:kgen.ss.mod}, we must have $m_i\leq k\cdot \dim_K V_i/\dim_K D_i$ for $i=1,\ldots, r$ where $D_i = \End_{KG}(V_i)$.  But $\End_{KG}(V_i^*) \cong D_i^{op}$ by Proposition~\ref{p:end.contra}, and so we obtain
\[m_i\leq k\cdot \dim_K V_i/\dim_K D_i=k\cdot \dim_K V_i^*/\dim_K D_i^{op}\] and so $V^*$ is $k$-generated by another application of Proposition~\ref{p:kgen.ss.mod}.  The converse follows using that $V\cong (V^*)^*$ and $V^*$ is semisimple.
\end{proof}

Putting together the results of this subsection with the previous one, we arrive at the following theorem.

\begin{Thm}\label{T:desc.kgen.socle}
Let $G$ be a group, $N\lhd G$ a normal subgroup and $F$ an algebraically closed field whose characteristic does not divide $|A(G)\cap N|$.  Let $k\geq 1$.  Then the following are equivalent.
\begin{enumerate}
\item $S(G)\cap N$ is $k$-generated as a normal subgroup.
\item $A(G)\cap N$ is a $k$-generated $\mathbb ZG$-module.
\item $A(G)_p\cap N$ is a $k$-generated $\mathbb F_pG$-module for each prime divisor $p$ of $|A(G)|$.
\item $\wh{A(G)_p\cap N}$ is a $k$-generated $\mathbb F_pG$-module for each prime divisor $p$ of $|A(G)|$.
\item $\wh{A(G)\cap N}$ is a $k$-generated $\mathbb ZG$-module.
\item There exist $\chi_1,\ldots, \chi_k\in \wh {A(G)\cap N}$ such that $\bigcap_{i=1}^k\bigcap_{g\in G}\ker g\chi_i=\{1\}$.
\end{enumerate}
\end{Thm}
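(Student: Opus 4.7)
The plan is to chain together the propositions already established in this section, so the proof is mostly a matter of bookkeeping rather than new content. The equivalence of (1), (2), and (3) is essentially Proposition~\ref{p:kgen.soc}; the only discrepancy is that Proposition~\ref{p:kgen.soc} quantifies over primes dividing $|A(G)\cap N|$ while (3) quantifies over primes dividing $|A(G)|$, but this is harmless: if $p\mid |A(G)|$ does not divide $|A(G)\cap N|$, then $A(G)_p\cap N=\{1\}$ and the condition is vacuously satisfied.

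For (3) $\Leftrightarrow$ (4), I would invoke Propositions~\ref{p:character.contra} and~\ref{p:contra.simple} together with Corollary~\ref{c:kgen.dual}. By Proposition~\ref{p:interact.with.normal}(3), each $A(G)_p\cap N$ is a semisimple $\mathbb F_pG$-module; by Proposition~\ref{p:character.contra} we may identify $\wh{A(G)_p\cap N}$ with the contragredient $(A(G)_p\cap N)^*$; and by Corollary~\ref{c:kgen.dual}, $k$-generation is preserved under taking the contragredient of a finite-dimensional semisimple module. For (4) $\Leftrightarrow$ (5), Proposition~\ref{p:sylows.dual} gives the Sylow decomposition $\wh{A(G)\cap N}=\bigoplus_{p\mid |A(G)\cap N|}\wh{A(G)_p\cap N}$ as $\mathbb ZG$-modules, whence Proposition~\ref{p:finite.zgmod.gen} reduces $k$-generation of $\wh{A(G)\cap N}$ as a $\mathbb ZG$-module to that of each Sylow component. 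Since each $\wh{A(G)_p\cap N}$ is an elementary abelian $p$-group, its $\mathbb ZG$-submodules and $\mathbb F_pG$-submodules coincide, so $k$-generation as a $\mathbb ZG$-module is the same as $k$-generation as an $\mathbb F_pG$-module, finishing the equivalence.

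For (5) $\Leftrightarrow$ (6), I would apply Corollary~\ref{c:generate} to $A=A(G)\cap N$, which is legitimate because the hypothesis on the characteristic of $F$ is exactly what Corollary~\ref{c:generate} requires. If $\chi_1,\ldots,\chi_k$ generate $\wh{A(G)\cap N}$ as a $\mathbb ZG$-module, then the set $\{g\chi_i\mid g\in G,\ 1\leq i\leq k\}$ generates it as an abelian group, so Corollary~\ref{c:generate} yields $\bigcap_{i,g}\ker g\chi_i=\{1\}$. Conversely, triviality of that intersection forces $\{g\chi_i\}$ to generate $\wh{A(G)\cap N}$ as an abelian group by Corollary~\ref{c:generate}, and so the $\chi_i$ generate it as a $\mathbb ZG$-module.

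There is no real obstacle here beyond ensuring that the switches between the various module structures (between $\mathbb ZG$- and $\mathbb F_pG$-modules on Sylow components, and between a semisimple module and its contragredient) all preserve the property of being $k$-generated. All of the genuinely content-laden work has been done in the preceding propositions, so the proof amounts to stitching them together in the order (1)$\Leftrightarrow$(2)$\Leftrightarrow$(3)$\Leftrightarrow$(4)$\Leftrightarrow$(5)$\Leftrightarrow$(6).
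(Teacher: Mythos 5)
Your proof is correct and follows essentially the same route as the paper's: (1)--(3) via Proposition~\ref{p:kgen.soc}, (3)$\Leftrightarrow$(4) via Proposition~\ref{p:character.contra} and Corollary~\ref{c:kgen.dual}, (4)$\Leftrightarrow$(5) via Propositions~\ref{p:sylows.dual} and~\ref{p:finite.zgmod.gen}, and (5)$\Leftrightarrow$(6) via Corollary~\ref{c:generate}. Your remark that primes dividing $|A(G)|$ but not $|A(G)\cap N|$ contribute only the trivial module is a small point the paper leaves implicit, and it is handled correctly.
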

\begin{proof}
The equivalence of (1)--(3) is the content of Proposition~\ref{p:kgen.soc}.  The equivalence of (3) and (4) follows from Corollary~\ref{c:kgen.dual} as $A(G)_p\cap N$ is a semisimple $\mathbb F_pG$-module by Proposition~\ref{p:AG.semisimple} and $\wh{A(G)_p\cap N}\cong (A(G)_p\cap N)^*$ by Proposition~\ref{p:character.contra}.  The equivalence of (4) and (5) is an application of Propositions~\ref{p:finite.zgmod.gen} and~\ref{p:sylows.dual}.  By Corollary~\ref{c:generate}, the condition in (6) is equivalent to there being $k$-elements of $\wh{A(G)\cap N}$ whose translates under $G$ generate $\wh{A(G)\cap N}$ as an abelian group.  But this is the same as saying that $\wh{A(G)\cap N}$ is a $k$-generated $\mathbb ZG$-module.
\end{proof}

The equivalent conditions in~\cite{Tazawa} to \v{Z}mud$'$'s theorem~\cite{Zmud1,zmud2}, stem from the above equivalences.

\subsection{Faithful completely reducible representations}

 If $F$ is a field, a matrix representation $\rho\colon G\to GL_n(F)$ is \emph{faithful} if $\ker \rho$ is trivial.  This depends only on the equivalence class of $\rho$ and so we shall say than an $FG$-module is $G$-\emph{faithful} if it affords a faithful matrix representation of $G$.  Of course, if $H\leq G$, then every $FG$-module  $V$ restricts to an $FH$-module, and so we may say that $V$ is \emph{$H$-faithful} if it restricts to an $H$-faithful $FH$-module.  Recall that a representation afforded by a simple module is called \emph{irreducible} and a representation afforded by a semisimple module is called \emph{completely reducible}.

Let us start with a well-known lemma, cf.~\cite[Theorem~8.4]{LamBook}.

\begin{Lemma}\label{l:p.group.simple}
Let $G$ be a $p$-group and $F$ a field of characteristic $p$.  Then the only simple $FG$-module is the trivial module.
\end{Lemma}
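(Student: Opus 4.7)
The plan is to show that every nonzero finite dimensional $FG$-module $V$ has a nontrivial space of $G$-fixed vectors $V^G$. Since $G$ is finite, every simple $FG$-module is a cyclic quotient of $FG$, hence finite dimensional; so any simple $FG$-module $V$ has $V^G\neq 0$, and as $V^G$ is an $FG$-submodule, simplicity forces $V^G=V$, i.e., $G$ acts trivially. Then $V$ is a simple $F$-module, so $V\cong F$ with trivial action.

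The existence of a nonzero fixed vector I would prove by induction on $|G|$. The base case $|G|=1$ is trivial. For the inductive step, use the standard fact that a nontrivial finite $p$-group has nontrivial center, so one can pick a central element $z\in Z(G)$ of order $p$. Because $F$ has characteristic $p$, the binomial formula gives
\[
(z-1)^p = z^p - 1 = 0
\]
in $FG$, so the operator $z-1$ on $V$ is nilpotent and therefore has nonzero kernel $W=\{v\in V\mid zv=v\}$. Since $z$ is central, $W$ is an $FG$-submodule of $V$, and the action of $G$ on $W$ factors through the quotient $G/\langle z\rangle$, which is a $p$-group of strictly smaller order. By the inductive hypothesis applied to $W$ as a module over this smaller $p$-group, $W^{G/\langle z\rangle}\neq 0$; but $W^{G/\langle z\rangle}=W^G\subseteq V^G$, which finishes the induction.

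The main (and only) substantive step is the inductive argument exploiting the center; everything else is formal. No step looks like a serious obstacle, since the two ingredients -- nontriviality of the center of a $p$-group and the identity $(z-1)^p=0$ in characteristic $p$ -- are both elementary.
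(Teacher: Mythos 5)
Your proof is correct and follows essentially the same route as the paper: induction on $|G|$, using the nontrivial center of a $p$-group and the nilpotence of $z-1$ (equivalently, that $1$ is the only eigenvalue of a $p$-element in characteristic $p$) to produce a nonzero fixed subspace that is a submodule by centrality. The only cosmetic differences are that you quotient by a single central element of order $p$ rather than by all of $Z(G)$, and you phrase the induction via nonvanishing of $V^G$ for arbitrary nonzero modules instead of invoking simplicity at the intermediate step.
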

\begin{proof}
We proceed by induction on $|G|$.  If $G$ is trivial, there is nothing to prove.   Else,  let $V$ be a simple $FG$-module and first observe that if $g\in G$, then we have $g^{p^n}=1$ for some $n\geq 1$ and hence the minimal polynomial of $g$ divides $x^{p^n}-1 = (x-1)^{p^n}$.  Thus $1$ is an eigenvalue of $g$.  In particular, if $g\in Z(G)$,  then the eigenspace of $g$ for eigenvalue $1$ is a nonzero $FG$-submodule (because $g$ commutes with all elements of $G$) and hence is $V$.  Therefore, $g$ acts trivially on $V$, and so $V$ is inflated from an $F[G/Z(G)]$-module.  Since $Z(G)\neq \{1\}$, by induction we conclude that $V$ is the trivial $F[G/Z(G)]$-module and hence is the trivial $FG$-module.
\end{proof}

Recall that if $G$ is a group and $p$ is a prime, then the \emph{$p$-core} of $G$, denoted $O_p(G)$, is the largest normal $p$-subgroup of $G$, that is, the product of all normal $p$-subgroups of $G$.  Of course, $O_p(G)$ is a characteristic subgroup of $G$ by its definition.

\begin{Prop}\label{p:faithful}
Let $G$ be a finite group, $N\lhd G$ a normal subgroup and $F$ a field of characteristic $p\geq 0$.  Then the following are equivalent.
\begin{enumerate}
\item $G$ admits a completely reducible representation over $F$ that restricts to a faithful representation of $N$.
\item $p$ does not divide $|A(G)\cap N|$.
\item $p=0$, or $N$ has no nontrivial normal $p$-subgroup.
\end{enumerate}
\end{Prop}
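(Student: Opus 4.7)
The plan is to establish the cycle $(1) \Rightarrow (2) \Leftrightarrow (3) \Rightarrow (1)$.

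First I would dispatch the equivalence $(2) \Leftrightarrow (3)$, which is purely group-theoretic. When $p = 0$ both conditions hold vacuously, so assume $p > 0$. For the forward direction, $A(G)_p \cap N$ is a nontrivial $p$-group and, being characteristic in $A(G) \cap N$, is normal in $G$ (hence in $N$) by Proposition~\ref{p:interact.with.normal}; this witnesses a nontrivial normal $p$-subgroup of $N$ when $p \mid |A(G) \cap N|$. For the converse, if $O_p(N) \neq \{1\}$, then $O_p(N)$ is characteristic in $N$ and thus normal in $G$, so I would pick a minimal normal subgroup $M$ of $G$ contained in $O_p(N)$; since $M$ is a $p$-group, Corollary~\ref{c:abel.min.norm} makes $M$ elementary abelian, whence $M \leq A(G)_p \cap N$ and $p \mid |A(G) \cap N|$.

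Next I would handle $(1) \Rightarrow (2)$ by contrapositive. Assuming $p \mid |A(G) \cap N|$, the subgroup $P := A(G)_p \cap N$ is a nontrivial normal $p$-subgroup of $G$ by the previous paragraph. For any completely reducible $FG$-module $V$, I would apply Clifford's theorem to each simple summand $S$: the restriction $S|_P$ decomposes as a direct sum of $G$-conjugates of a single simple $FP$-module, which by Lemma~\ref{l:p.group.simple} must be the trivial module. Hence $P$ acts trivially on $V$, so $V|_N$ is not faithful, as required.

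Finally, for $(3) \Rightarrow (1)$, I would exhibit the completely reducible $FG$-module $V := FG/J(FG)$. Its $N$-kernel is $K := \{n \in N : n - 1 \in J(FG)\}$, and a short check (closure under products uses that $J(FG)$ is two-sided) shows $K$ is a normal subgroup of $G$. The crux, which I expect to be the main technical step, is to argue that $K$ is a $p$-group: if $n \in K$ has order $d$, nilpotence of $J(FG)$ forces $(n - 1)^m = 0$ for some $m$, and $n^d = 1$ makes $n$ a root of $x^d - 1$; if $p \nmid d$, then $\gcd\bigl((x - 1)^m,\, x^d - 1\bigr) = x - 1$ in $F[x]$ and Bezout yields $n = 1$. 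Hence $K$ is a normal $p$-subgroup of $N$ and therefore contained in $O_p(N)$, which (3) forces to be trivial (the case $p = 0$ is immediate since $J(FG) = 0$ by Maschke). Thus $V|_N$ is faithful.
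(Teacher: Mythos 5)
Your proof is correct and follows essentially the same route as the paper: Clifford's theorem plus Lemma~\ref{l:p.group.simple} for (1)$\Rightarrow$(2), a minimal normal subgroup of $G$ inside $O_p(N)$ for the (2)$\Leftrightarrow$(3) step, and $FG/J(FG)$ with nilpotence of the radical for (3)$\Rightarrow$(1), the only cosmetic difference being that the paper gets the $p$-group claim for the kernel directly from $(g-1)^{p^m}=g^{p^m}-1$ in characteristic $p$ rather than your gcd/Bezout argument (which also needs the implicit appeal to $K$ being a subgroup, or Cauchy, to pass from ``no nontrivial $p'$-elements'' to ``$p$-group''). One small citation slip: Corollary~\ref{c:abel.min.norm} applies to \emph{abelian} minimal normal subgroups, so before invoking it you should note that a minimal normal subgroup $M$ that is a nontrivial $p$-group is abelian (since $Z(M)$ is nontrivial and characteristic in $M$, hence normal in $G$, forcing $Z(M)=M$ by minimality), which is exactly how the paper argues.
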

\begin{proof}
If $p=0$, then $FG$ is semisimple, by Maschke's theorem, and is clearly $N$-faithful.   The other conditions hold trivially when $p=0$, and so the result follows in this case.   Suppose from now on that $p>0$.

Assume first that  $G$ has an $N$-faithful semisimple module $V$.  If we have that $p\mid |A(G)\cap N|$, then $M=A(G)_p\cap N$ is a nontrivial normal $p$-subgroup of $G$.  By Clifford's theorem, the restriction of $V$ to an $FM$-module is semisimple, and it clearly is  $M$-faithful. But then $M$ must be trivial by Lemma~\ref{l:p.group.simple}. We conclude that $p\nmid |A(G)\cap N|$. So (1) implies (2).

 To prove that (2) implies (3), suppose that $N$ has a nontrivial normal $p$-subgroup. Then $O_p(N)$ is nontrivial.  Since $O_p(N)$ is characteristic in $N$, it is normal in $G$ and hence contains a minimal normal subgroup $M$ of $G$, which must be a $p$-group.  Since $Z(M)$ is a nontrivial characteristic subgroup of $M$, we must have that $M=Z(M)$, and so $M\leq A(G)\cap N$. It follows that $p\mid |A(G)\cap N|$.

 Finally, assume that (3) holds.    Then we claim that $FG/J(FG)$  is $N$-faithful where $J(FG)$ is the Jacobson radical of $FG$.   Let $\rho$ be a representation afforded by $FG/J(FG)$; it is completely reducible.   We claim that $\ker \rho$ is a $p$-group.  Indeed, if $g\in \ker \rho$, then $g-1\in J(FG)$ and so, since $J(FG)$ is nilpotent, we can find $m>0$ with $0=(g-1)^{p^m} =g^{p^m}-1$, i.e., $g^{p^m}=1$.  We conclude that  $\ker \rho$ is a normal $p$-subgroup and hence $N\cap \ker \rho$ is trivial by assumption, establishing (1).  This completes the proof.
\end{proof}

It is well known (cf.~\cite[Corollary~8.6]{LamBook}) that a group has a faithful completely reducible representation over a field of characteristic $p$ if and only if it has no nontrivial normal $p$-subgroup (this is the equivalence of (1) and (3) in the case $G=N$), and so the conditions of the above proposition are all equivalent to $N$ having a faithful completely reducible representation.

Our next goal is to characterize when $G$ has an $N$-faithful semisimple module with $k$ irreducible constituents (composition factors) over a field $F$.  By taking $G=N$, this contains as a special case Gasch\"{u}tz's theorem~\cite{Gaschutz} characterizing when a group has faithful irreducible representation and \v{Z}mud$'$'s theorem~\cite{Zmud1,zmud2}.  We first handle the case of an algebraically closed field and then show how to deduce the general case from that.  One simplifying advantage of algebraically closed fields is that if $G_1,G_2$ are finite groups and $F$ is an algebraically closed field, then the simple $F[G_1\times G_2]\cong FG_1\otimes_F FG_2$-modules are the (outer) tensor products $V_1\otimes_F V_2$ of a simple $FG_1$-module $V_1$ with a simple $FG_2$-module $V_2$.  The next two results are well known and can be found, for instance, in~\cite{Szechtman}.  We include proofs for completeness.

\begin{Prop}\label{p:snag.case}
Let $G$ be a simple nonabelian group and $F$ a field.  Then $G$ has a faithful irreducible representation over $F$.
\end{Prop}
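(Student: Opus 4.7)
The plan is to exploit simplicity of $G$ to reduce ``faithful irreducible'' to ``nontrivial irreducible'', and then to produce a nontrivial irreducible representation by extracting it from a faithful completely reducible one.

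First, I would observe that since $G$ is simple, any proper normal subgroup of $G$ is trivial. In particular, the kernel of any representation $\rho\colon G\to GL(V)$ is a normal subgroup of $G$, so either $\rho$ is trivial (kernel equals $G$) or $\rho$ is faithful (kernel is trivial). Consequently, it suffices to produce a nontrivial irreducible $FG$-module.

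Second, I would verify the hypothesis of Proposition~\ref{p:faithful} with $N=G$. Let $p$ be the characteristic of $F$; if $p=0$ there is nothing to check, so assume $p>0$. Since $G$ is nonabelian and simple, $G$ is not a $p$-group (a nontrivial finite $p$-group has nontrivial center, which would violate simplicity unless $G$ is cyclic of prime order, hence abelian). Thus $O_p(G)$ is a proper normal subgroup of $G$ and so is trivial. By Proposition~\ref{p:faithful}, $G$ admits a faithful completely reducible $FG$-module $V$.

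Third, write $V\cong V_1\oplus\cdots\oplus V_k$ as a direct sum of simple $FG$-modules, and let $\rho_i$ denote the representation afforded by $V_i$. The kernel of the representation afforded by $V$ is $\bigcap_{i=1}^k \ker\rho_i$, which is trivial by faithfulness. Each $\ker\rho_i$ is a normal subgroup of $G$, hence either $\{1\}$ or $G$, by simplicity. Since their intersection is trivial, at least one $\ker\rho_i$ must equal $\{1\}$. That $V_i$ is then a faithful irreducible $FG$-module, as required.

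There is no real obstacle here: every step is a direct consequence of tools already established in the excerpt. The only mild subtlety is ruling out that $G$ is a $p$-group in positive characteristic, which is immediate from simplicity plus the nonabelian hypothesis.
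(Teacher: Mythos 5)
Your proof is correct and follows essentially the same route as the paper: both invoke Proposition~\ref{p:faithful} with $N=G$ to obtain a faithful completely reducible module and then use simplicity to extract a faithful irreducible constituent. The only cosmetic difference is that you verify condition (3) of that proposition (no nontrivial normal $p$-subgroup, via the center argument), whereas the paper verifies the equivalent condition (2) by noting directly that $A(G)=\{1\}$.
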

\begin{proof}
Since $G$ is simple nonabelian, we must have that $A(G)=\{1\}$.  It follows then from Proposition~\ref{p:faithful} (with $N=G$) that $G$ admits a faithful completely reducible representation $\rho$.  As $G$ is nontrivial, $\rho$ must have a nontrivial irreducible constituent, and that constituent must be faithful by simplicity.
\end{proof}

We upgrade the previous result to direct products of simple nonabelian groups for the case of an algebraically closed field.

\begin{Prop}\label{p:prod.snag.case}
Let $H=T_1\times\cdots\times T_m$, where $T_1,\ldots, T_m$ are simple nonabelian groups, and let $F$ be an algebraically closed field.  Then $H$ has a faithful irreducible representation $\rho\colon H\to \mathrm{GL}_n(F)$ for some $n\geq 1$. Moreover, $Z(H)=\{1\}$ and hence $\rho(H\setminus \{1\})$ contains no scalar multiple of the identity matrix.
\end{Prop}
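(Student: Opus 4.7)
The plan is to combine Proposition~\ref{p:snag.case} with the tensor-product description of simple modules over products of groups (available because $F$ is algebraically closed, as the paper remarked just before the proposition).

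First I would apply Proposition~\ref{p:snag.case} to each factor $T_i$ to obtain a faithful irreducible representation $\rho_i\colon T_i\to \mathrm{GL}_{n_i}(F)$, afforded by a simple $FT_i$-module $V_i$. Next I would form the outer tensor product representation $\rho=\rho_1\otimes\cdots\otimes\rho_m$ of $H$ on $V_1\otimes_F\cdots\otimes_F V_m$. Because $F$ is algebraically closed, the tensor product of simple modules of the factor group algebras is a simple module of the product algebra $FH\cong FT_1\otimes_F\cdots\otimes_F FT_m$, so $\rho$ is irreducible.

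The main point is faithfulness of $\rho$. Suppose $(t_1,\ldots,t_m)\in\ker\rho$, so $\rho_1(t_1)\otimes\cdots\otimes\rho_m(t_m)$ equals the identity on the tensor product. A standard argument shows that each $\rho_i(t_i)$ must then be a scalar matrix $\lambda_i I$ with $\lambda_1\cdots\lambda_m=1$. Now the set $N_i=\{t\in T_i:\rho_i(t)\in F\cdot I\}$ is a normal subgroup of $T_i$; it is not all of $T_i$, because otherwise $\rho_i(T_i)$ would be abelian and faithfulness of $\rho_i$ would force $T_i$ abelian, contradicting our hypothesis. By simplicity of $T_i$ we get $N_i=\{1\}$, hence $t_i=1$ for every $i$, so $\ker\rho$ is trivial.

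For the second assertion, $Z(H)=Z(T_1)\times\cdots\times Z(T_m)$, and each $Z(T_i)$ is a proper normal subgroup of the simple nonabelian group $T_i$, hence trivial; therefore $Z(H)=\{1\}$. Finally, if $h\in H$ with $\rho(h)=\lambda I$ for some $\lambda\in F^\times$, then $\rho(h)$ commutes with $\rho(g)$ for every $g\in H$, so $\rho(hgh^{-1}g^{-1})=I$ for all $g$, and faithfulness of $\rho$ gives $hgh^{-1}g^{-1}=1$ for all $g\in H$, i.e.\ $h\in Z(H)=\{1\}$. Thus $\rho(H\setminus\{1\})$ contains no scalar matrix. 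The one step requiring care is the tensor-product-of-simples fact, but this is quoted in the paragraph preceding the proposition, so no real obstacle remains.
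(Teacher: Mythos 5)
Your proposal is correct, and the construction is the same as the paper's: take the outer tensor product of the faithful irreducibles supplied by Proposition~\ref{p:snag.case} and invoke the algebraically-closed-field fact to get irreducibility. The only genuine divergence is in how faithfulness is verified. The paper argues group-theoretically: since $T_1,\ldots,T_m$ are all the minimal normal subgroups of $H$ (via Theorem~\ref{t:socle.struct}), it suffices to check $\ker\rho\cap T_i=\{1\}$, which is done by exhibiting a simple tensor moved by a nontrivial $g\in T_i$. You instead argue linear-algebraically: an element of $\ker\rho$ must act as a scalar in each tensor factor (the standard fact that $A_1\otimes\cdots\otimes A_m=I$ forces each $A_i$ scalar), and the scalar-acting elements of $T_i$ form a proper normal subgroup, hence are trivial by simplicity. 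Your route avoids the socle structure theorem entirely and only uses simplicity of each $T_i$ plus elementary tensor algebra; the paper's route avoids the (slightly fussier, though standard) decomposition of a tensor-product identity into scalar components. Both are sound, and your treatment of the final assertion ($Z(H)=\{1\}$ and no nontrivial element mapping to a scalar) matches the paper's.
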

\begin{proof}
Since the $T_i$ are simple nonabelian groups, we have that $Z(H)=Z(T_1)\times \cdots\times Z(T_m)=\{1\}$. Obviously, $T_1,\ldots, T_m$ are minimal normal subgroups of $H$ and hence $H=T(H)=S(H)$.  We may then deduce from Theorem~\ref{t:socle.struct} that  $T_1,\ldots, T_m$ are all the minimal normal subgroups of $H$.

By Proposition~\ref{p:snag.case}, we can find a simple $FT_i$-module $V_i$ affording a faithful representation of $T_i$.  Then $V=V_1\otimes_F\cdots\otimes_F V_m$ is a simple $FH$-module using that $F$ is algebraically closed (this is a general fact about simple modules for tensor products of finite dimensional algebras over an algebraically closed field).  We claim that $V$ affords a faithful representation $\rho$.  Since $T_1,\ldots, T_m$ constitute all the minimal normal subgroups of $H$, it suffices to check that $\ker \rho\cap T_i=\{1\}$ for all $i$.  But if $1\neq g\in T_i$, then we can find $v_i\in V_i$ with $gv_i\neq v_i$ by $T_i$-faithfulness of  $V_i$.  If we choose any nonzero vector $v_j\in V_j$ for $j\neq i$, then we have that \[(1,\ldots, g,\ldots, 1)v_1\otimes \cdots v_m = v_1\otimes v_{i-1}\otimes gv_i\otimes v_{i+1}\otimes \cdots \otimes v_m\neq v_1\otimes \cdots \otimes v_m\] and so $\ker \rho\cap T_i=\{1\}$.

The final statement follows because $\rho$ is a faithful representation and $H$ has trivial center and hence no nontrivial element can map to a scalar matrix.
\end{proof}

Recall that if $M\lhd G$ is a normal subgroup, then $G$ acts on the set of isomorphism classes of finite dimensional $FM$-modules.  If $V$ is an $FM$-module and $g\in G$, then $gV$ denotes the $FM$-module with underlying vector space $V$ and module structure given by $m\cdot v = g\inv mgv$ for $m\in M$ and $v\in V$. Note that $V$ is simple if and only if $gV$ is simple as they have the same $M$-invariant subspaces.    Also observe that $V\cong W$ implies $gV\cong gW$ and that $h(gV)=(hg)V$.  If $\chi$ is the character of $V$, then $g\chi$ is the character of $gV$ where $(g\chi)(m) = \chi(g\inv mg)$.  In particular, if $F$ is algebraically closed and the characteristic of $F$ does not divide $|A(G)\cap N|$, then the action of $G$ on isomorphism classes of simple $F[A(G)\cap N]$-modules corresponds to its action on $\wh{A(G)\cap N}$ via taking a module to its character.  Returning to the general situation, if $V$ is an $FM$-module, then $I(V)=\{g\in G\mid gV\cong V\}$ is called the \emph{inertia subgroup} of $V$. Since $m_0V\cong V$ for $m_0\in M$ (via $v\mapsto m_0v$), we have that $M\lhd I(V)$.

If $H\leq G$ is a subgroup and $V$ is an $FH$-module, the \emph{induced module} is $\Ind_H^G V = FG\otimes_{FH} V$.   We write $\Res_H^G W$ for the restriction of an $FG$-module $W$ to $FH$. Recall that Frobenius reciprocity yields $\Hom_{FG}(\Ind_H^G V,W)\cong \Hom_{FH}(V,\Res_H^G W)$.

If $V$ is a simple $FG$-module and $M\lhd G$ is a normal subgroup, then by Clifford's theorem  if $W$ is any simple $FM$-submodule of $\Res_M^G V$, there exists $e\geq 1$ such that $\Res_M^G V\cong \bigoplus_{t\in T} (tW)^e$ where $T$ is a transversal to $G/I(V)$.

\begin{Prop}\label{p:build.faithful.rep.soc}
Let $G$ be a group and $N\lhd G$ a normal subgroup.
Suppose that $F$ is an algebraically closed field whose characteristic does not divide $|A(G)\cap N|$.  Let $\chi_1,\ldots,\chi_k\in \wh{A(G)\cap N}$ be such that $\bigcap_{i=1}^k\bigcap_{g\in G}g\chi_i=\{1\}$.  Then there exist simple $F[S(G)\cap N]$-modules $V_1,\ldots, V_k$ such that \[\bigoplus_{i=1}^k\bigoplus_{t\in T_i}tV_i\] is $(S(G)\cap N)$-faithful, where $T_i$ is a transversal to $G/I(V_i)$ for $i=1,\ldots, k$.
\end{Prop}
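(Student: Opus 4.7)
The plan is to construct each $V_i$ as an outer tensor product combining the linear character $\chi_i$ on $A(G)\cap N$ with a single faithful simple $F[T(G)\cap N]$-module $U$, and then to verify that summing the $G$-translates kills both direct factors of $S(G)\cap N$ simultaneously.

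First, Proposition~\ref{p:interact.with.normal} gives the internal direct product decomposition $S(G)\cap N=(A(G)\cap N)\times (T(G)\cap N)$, with $T(G)\cap N$ a direct product of (some of) the nonabelian minimal normal subgroups of $G$; by Proposition~\ref{p:minimalnormal.struct} each of these factors is itself a direct product of nonabelian simple groups, so $T(G)\cap N$ is a product of nonabelian simple groups. I would apply Proposition~\ref{p:prod.snag.case} to this product to obtain a faithful simple $F[T(G)\cap N]$-module $U$ with the crucial additional property that no nontrivial element of $T(G)\cap N$ is sent to a scalar matrix. For each $i$, let $F_{\chi_i}$ denote the one-dimensional $F[A(G)\cap N]$-module on which $a$ acts by multiplication by $\chi_i(a)$ (this exists because the characteristic of $F$ does not divide $|A(G)\cap N|$), and set $V_i=F_{\chi_i}\otimes_F U$. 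Since $F$ is algebraically closed and $F[S(G)\cap N]\cong F[A(G)\cap N]\otimes_F F[T(G)\cap N]$, each $V_i$ is a simple $F[S(G)\cap N]$-module.

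Next, I would show that $W=\bigoplus_i\bigoplus_{t\in T_i}tV_i$ is $(S(G)\cap N)$-faithful. Suppose $a\tau\in(A(G)\cap N)\times(T(G)\cap N)$ acts trivially on $W$. Since $A(G)\cap N$ and $T(G)\cap N$ are both normal in $G$, we have $t^{-1}(a\tau)t=(t^{-1}at)(t^{-1}\tau t)$ in that product decomposition. A direct computation then shows that on the underlying space of $tV_i$, the element $a\tau$ acts as the scalar $(t\chi_i)(a)$ times the operator by which $t^{-1}\tau t$ acts on $U$. For this composite to equal the identity, $t^{-1}\tau t$ must act on $U$ as a scalar; the defining property of $U$ forces $t^{-1}\tau t=1$, hence $\tau=1$, and consequently $(t\chi_i)(a)=1$.

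Finally, I would conclude that $(t\chi_i)(a)=1$ for all $i$ and $t\in T_i$ forces $a=1$ using the hypothesis. The key observation is $I(V_i)\leq \mathrm{Stab}_G(\chi_i)$: if $gV_i\cong V_i$ as $F[S(G)\cap N]$-modules, then restricting the isomorphism to the abelian group $A(G)\cap N$ shows that the one-dimensional character that appears (with multiplicity $\dim U$) is both $\chi_i$ and $g\chi_i$. Hence $\{g\chi_i:g\in G\}=\{t\chi_i:t\in T_i\}$ (possibly with repetitions on the right), and the condition $(t\chi_i)(a)=1$ for all $i,t$ is exactly $a\in\bigcap_i\bigcap_{g\in G}\ker(g\chi_i)=\{1\}$ by hypothesis. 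The only subtle step is this last identification of orbits via the containment $I(V_i)\leq \mathrm{Stab}_G(\chi_i)$; everything else is straightforward unpacking of outer tensor products together with the scalar-free action property of $U$ supplied by Proposition~\ref{p:prod.snag.case}.
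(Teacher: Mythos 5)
Your proposal is correct and follows essentially the same route as the paper: the same decomposition $S(G)\cap N=(A(G)\cap N)\times(T(G)\cap N)$, the same construction of $V_i$ as the outer tensor product of the line $F_{\chi_i}$ with the scalar-free faithful module from Proposition~\ref{p:prod.snag.case}, and the same use of the containment $I(V_i)\leq\mathrm{Stab}_G(\chi_i)$ (via restriction to $A(G)\cap N$) to identify the transversal translates $t\chi_i$ with the full $G$-orbit of $\chi_i$. The only difference is organizational: you treat a general element $a\tau$ uniformly on each $tV_i$ and split cases afterward, whereas the paper separates the cases $\tau\neq 1$ and $a\neq 1$ up front; both verifications are equivalent.
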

\begin{proof}
By Proposition~\ref{p:interact.with.normal}, $S(G)\cap N=(A(G)\cap N)\times (T(G)\cap N)$. Note that $T(G)\cap N$ is a direct product of simple nonabelian groups by Propositions~\ref{p:minimalnormal.struct} and~\ref{p:interact.with.normal}.  Therefore, by Proposition~\ref{p:prod.snag.case}, there is a $(T(G)\cap N)$-faithful simple $F[T(G)\cap N]$-module $W$, and, moreover, no nontrivial element of $T(G)\cap N$ acts by scalar multiplication under this representation.  Let $V_i$ be the outer product of the one-dimensional module associated to $\chi_i$ with $W$, which is a simple $F[S(G)\cap N]$-module.  Concretely, $V_i$ is $W$ as an $F$-vector space and the module structure is given by $(ag)\cdot v = \chi_i(a)gv$ for $a\in A(G)\cap N$ and $g\in T(G)\cap N$.  Choose a transversal $T_i$ to $G/I(V_i)$ for $i=1,\ldots, k$. Let $V= \bigoplus_{i=1}^k\bigoplus_{t\in T_i}tV_i$ and let $\rho\colon S(G)\cap N\to \mathrm{GL}(V)$ be the associated representation.  Note that if $a\in A(G)\cap N$, $g\in G$ and $w\in W$, then in $gV_i$ we have that $a\cdot w = \chi_i(g\inv ag)w$, and so $\Res^{S(G)\cap N}_{A(G)\cap N} gV_i$ is isomorphic to a direct sum of $\dim W$ copies of the one-dimensional module associated to $g\chi_i$.

If $ag\in S(G)\cap N$ with $a\in A(G)\cap N$, $g\in T(G)\cap N$ and $g\neq 1$, then there is $w\in W$ such that $gw\notin Fw$ (as no nontrivial element of $T(G)\cap N$ acts by scalar multiplication).  Then in  $V_1$, we  have $ag\cdot w= \chi_1(a)gw\neq w$ (else $gw\in Fw$).  Therefore, $ag\notin \ker \rho$. On the other hand, if $1\neq a\in A(G)\cap N$, then there is $g\in G$ and $i\in \{1,\ldots, k\}$ such that $(g\chi_i)(a)\neq 1$ by assumption.  But if $t\in T_i$ with $gI(V_i)=tI(V_i)$, then $gV_i\cong tV_i$, and so $g\chi_i=t\chi_i$ and $a$ acts on $tV_i$ via scalar multiplication by $(t\chi_i)(a)=(g\chi_i)(a)\neq 1$ by the observation at the end of the previous paragraph.  Thus $\rho(a)\neq 1$.  We conclude that $\rho$ is faithful.
\end{proof}

We can now state and prove the version of our first main theorem for algebraically closed fields.  In the case that $N=G$ and the characteristic of $F$ does not divide $|G|$, this result is due to \v{Z}mud$'$~\cite{Zmud1,zmud2} (see also Tazawa~\cite{Tazawa}), although the result for more general characteristics follows from the combination of the results of \v{Z}mud$'$~\cite{zmud2} and Nakayama~\cite{Nakayamafaithful}.  We will later use the theory of Schur indices to handle arbitrary fields.   Recall that $G$ has a completely reducible representation over $F$ that restricts to a faithful representation of $N$ if and only if the characteristic of $F$ does not divide $|A(G)\cap N|$ by Proposition~\ref{p:faithful}, and so we shall need to impose this condition.  Recall that the \emph{length} of a module $V$ is the number of composition factors in a composition series for $V$ (i.e., the length of the composition series).

\begin{Thm}\label{t:alg.closed.case}
Let $G$ be a finite group, $N\lhd G$ a normal subgroup and $F$ an algebraically closed field whose characteristic does not divide $|A(G)\cap N|$.  Let $k\geq 1$. Then the following are equivalent.
\begin{enumerate}
\item $S(G)\cap N$ is $k$-generated as a normal subgroup of $G$.
\item $A(G)\cap N$ is $k$-generated as a $\mathbb ZG$-module.
\item $A(G)_p\cap N$ is $k$-generated as an $\mathbb F_pG$-module for each prime divisor $p$ of $|A(G)\cap N|$.
\item $\wh{A(G)_p\cap N}$ is $k$-generated as an $\mathbb F_pG$-module for each prime divisor $p$ of $|A(G)\cap N|$.
\item $\wh{A(G)\cap N}$ is $k$-generated as a $\mathbb ZG$-module.
\item There exist $\chi_1,\ldots,\chi_k\in \wh{A(G)\cap N}$ such that $\bigcap_{i=1}^k\bigcap_{g\in G}\ker g\chi_i=\{1\}$.
\item $G$ has a completely reducible representation over $F$ with $k$ irreducible constituents that restricts to a faithful representation of $N$.
\item There is an $N$-faithful $FG$-module $V$ of length $k$.
\end{enumerate}
\end{Thm}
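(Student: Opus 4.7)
The equivalence of (1)--(6) is already established by Theorem~\ref{T:desc.kgen.socle}, and the implication (7)$\Rightarrow$(8) is immediate since a completely reducible representation with $k$ irreducible constituents is afforded by a semisimple module of length $k$. So the plan reduces to establishing (8)$\Rightarrow$(6) and (6)$\Rightarrow$(7), which will close the cycle.

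For (8)$\Rightarrow$(6), the key observation is that, since the characteristic of $F$ does not divide $|A(G)\cap N|$, the group algebra $F[A(G)\cap N]$ is semisimple by Maschke's theorem, so the restriction of \emph{any} $FG$-module to $A(G)\cap N$ is semisimple (even when the module itself is not).  So if $V$ is an $N$-faithful $FG$-module of length $k$ with composition factors $W_1,\ldots,W_k$, then $\Res^G_{A(G)\cap N} V$ and $\bigoplus_{i=1}^k \Res^G_{A(G)\cap N} W_i$ have the same composition factors with the same multiplicities; being semisimple, they are isomorphic.  Since $V$ is in particular $(A(G)\cap N)$-faithful, so is this direct sum.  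Each $W_i$ is simple as an $FG$-module, so by Clifford's theorem $\Res^G_{A(G)\cap N} W_i$ is a direct sum of the $G$-conjugates of some one-dimensional character $\chi_i\in \wh{A(G)\cap N}$ (one-dimensional because $F$ is algebraically closed).  The kernel of that restriction is $\bigcap_{g\in G}\ker g\chi_i$, and faithfulness of the direct sum translates directly into $\bigcap_{i=1}^k\bigcap_{g\in G}\ker g\chi_i=\{1\}$, which is condition (6).

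For (6)$\Rightarrow$(7), start from characters $\chi_1,\ldots,\chi_k$ as in (6) and apply Proposition~\ref{p:build.faithful.rep.soc} to obtain simple $F[S(G)\cap N]$-modules $V_1,\ldots,V_k$ whose conjugate sum is $(S(G)\cap N)$-faithful.  For each $i$, take any simple quotient $U_i$ of $\Ind^G_{S(G)\cap N} V_i$; Frobenius reciprocity yields a nonzero, hence injective, $F[S(G)\cap N]$-homomorphism $V_i\hookrightarrow \Res^G_{S(G)\cap N} U_i$.  By Clifford's theorem, $\Res^G_{S(G)\cap N} U_i$ is semisimple and therefore contains every $G$-conjugate $tV_i$ with $t\in T_i$.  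Hence $U=U_1\oplus\cdots\oplus U_k$ is a semisimple $FG$-module with $k$ irreducible constituents whose restriction to $S(G)\cap N$ contains the faithful module of Proposition~\ref{p:build.faithful.rep.soc}, so $U$ is $(S(G)\cap N)$-faithful.  To upgrade to $N$-faithfulness, note $\ker \rho_U\cap N$ is normal in $G$ and contained in $N$; if it were nontrivial it would contain a minimal normal subgroup of $G$, which would then lie in $S(G)\cap N$ and hence in $\ker \rho_U\cap (S(G)\cap N)=\{1\}$, a contradiction.

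The main subtlety is the reduction in (8)$\Rightarrow$(6): for a general module of length $k$, faithfulness on $N$ need not pass to the semisimplification, since unipotent-type kernel elements can arise.  The escape route is that $A(G)\cap N$ is a normal subgroup of order coprime to the characteristic, which forces its action to be semisimple even when the ambient module is not; this is what lets composition factors and the full module agree on $A(G)\cap N$ and makes Clifford's theorem directly applicable.  Everything else---the use of Frobenius reciprocity, the passage from $(S(G)\cap N)$-faithful to $N$-faithful via minimal normal subgroups, and the construction of the $V_i$---is routine once Proposition~\ref{p:build.faithful.rep.soc} and Theorem~\ref{T:desc.kgen.socle} are in hand.
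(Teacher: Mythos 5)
Your proof is correct and follows essentially the same route as the paper: (1)--(6) via Theorem~\ref{T:desc.kgen.socle}, (8)$\Rightarrow$(6) by semisimplicity of $F[A(G)\cap N]$ plus Clifford's theorem, and (6)$\Rightarrow$(7) via Proposition~\ref{p:build.faithful.rep.soc}, simple quotients of induced modules, Frobenius reciprocity, and the minimal-normal-subgroup reduction from $N$ to $S(G)\cap N$. No substantive differences to report.
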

\begin{proof}
We already know that (1)--(6) are equivalent thanks to Theorem~\ref{T:desc.kgen.socle}. Obviously, (7) implies (8) since the corresponding semisimple module is a direct sum of $k$ simple modules and hence has $k$ composition factors.  Next we turn to (8) implies (6).  Suppose that $V$ is an $N$-faithful $FG$-module of length $k$.  Let $S_1,\ldots, S_k$ be the composition factors of $V$.   Then since $F[A(G)\cap N]$ is semisimple, by our hypothesis on $F$, we must have that a composition series for $V$ splits over $F[A(G)\cap N]$, and so \[\Res^G_{A(G)\cap N} V\cong \Res^G_{A(G)\cap N}S_1\oplus\cdots \oplus\Res^G_{A(G)\cap N} S_k.\]  Note that since $F$ is algebraically closed, each simple $F[A(G)\cap N]$-module is one-dimensional. Hence, by Clifford's theorem, we can find characters $\chi_1,\cdots,\chi_k\in \wh {A(G)\cap N}$, transversals $T_i$ to $G/I(\chi_i)$ and integers $e_i\geq 1$ so that $\Res^G_{A(G)\cap N} V$ affords a representation equivalent to  $\bigoplus_{i=1}^k\bigoplus_{t\in T_i}(t\chi_i)^{e_i}$.  As this representation is faithful, we deduce that \[\{1\}\leq \bigcap_{i=1}^k\bigcap_{g\in G}\ker g\chi_i\leq \bigcap_{i=1}^k\bigcap_{t\in T_i}\ker t\chi_i = \{1\}.\]   Thus (8) implies (6).

The most difficult implication is that (6) implies (7).  So let $\chi_1,\ldots,\chi_k\in \wh{A(G)\cap N}$ be such that $\bigcap_{i=1}^k\bigcap_{g\in G}\ker g\chi_i=\{1\}$.  Then by Proposition~\ref{p:build.faithful.rep.soc}, there exist simple $F[S(G)\cap N]$-modules $V_1,\ldots, V_k$ such that $\bigoplus_{i=1}^k\bigoplus_{t\in T_i}tV_i$ is $(S(G)\cap N)$-faithful, where $T_i$ is a transversal to $G/I(V_i)$ for $i=1,\ldots, k$.  Choose, for each $i=1,\ldots, k$, a simple quotient $W_i$ of $\Ind_{S(G)\cap N}^G V_i$.  We claim $W=W_1\oplus\cdots \oplus W_k$ affords a representation $\rho$ of $G$ with $\rho|_N$ faithful (it is clearly completely reducible with $k$ irreducible constituents).  If $\ker \rho\cap N\neq \{1\}$, then it contains a minimal normal subgroup of $G$, and hence  it suffices to check that $\ker\rho\cap S(G)\cap N=\{1\}$, (i.e., $\rho|_{S(G)\cap N}$ is faithful).  First note that by Frobenius reciprocity, $0\neq \Hom_{FG}(\Ind^G_{S(G)\cap N} V_i, W_i)\cong \Hom_{F[S(G)\cap N]}(V_i,\Res^G_{S(G)\cap N} W_i)$ and so $V_i$ is a submodule of $\Res^G_{S(G)\cap N} W_i$.  It follows from Clifford's theorem that $\Res_{S(G)\cap N}^G W_i\cong \bigoplus_{t\in T_i}(tV_i)^{e_i}$ for some integer $e_i\geq 1$.  Therefore, $\Res^G_{S(G)\cap N} W$ contains $\bigoplus_{i=1}^k\bigoplus_{t\in T_i}tV_i$ as a submodule and hence $\rho|_{S(G)\cap N}$ is faithful, as was required.  This completes the proof.
\end{proof}

We recall here some basic ingredients of the theory of Schur indices.  The reader can refer to~\cite[Theorem~74.5]{CurtisReinerII} for details.  Let $F$ now be an arbitrary field and $\ov F$ an algebraic closure of $F$.   Let $G$ be a finite group of order $n$ and let $\mathrm{Irr}(G)$ denote the set of irreducible characters of $G$ over $\ov F$. Denote by $\mu_n$ the group of $n^{th}$-roots of unity in $\ov F$.  Then $F(\mu_n)/F$ is a finite Galois (in fact, abelian) extension and each irreducible character in $\mathrm{Irr}(G)$ takes values in $F(\mu_n)$.  Let $H=\mathrm{Gal}(F(\mu_n)/F)$ be the Galois group.  Then $H$ acts on $\mathrm{Irr}(G)$ by $(\sigma\chi)(g)= \sigma(\chi(g))$ for $\chi$ a character and $\sigma \in H$.  Notice that the stabilizer of $\chi$ in $H$ has fixed field the character field $F(\chi)$, that is, the subfield generated over $F$ by $\chi(G)$.  We can extend $\sigma$ to an automorphism of $\ov F$, which we abusively denote by $\sigma$, as well; the identity map can of course be extended to the identity.  If  $\rho\colon G\to  GL_n(\ov F)$ is an irreducible representation, then we can define a new representation $\sigma\rho$ where $(\sigma\rho(g))_{ij}= \sigma(\rho(g)_{ij})$.   Clearly, $\sigma\rho$ is also irreducible, $\ker \rho=\ker\sigma\rho$ and if we replace $\rho$ by an equivalent representation $\rho'$, then $\sigma\rho$ is equivalent to $\sigma\rho'$.  One calls $\sigma\rho$ a \emph{Galois conjugate} of the representation $\rho$.   Note that the character of $\sigma\rho$ is $\sigma\chi$ where $\chi$ is the character of $\rho$.  If $V$ is the simple module corresponding to $\rho$, then $\sigma V$ denotes the simple module corresponding to $\sigma\rho$.  Note that if $\sigma,\tau\in H$, then $\sigma(\tau V)\cong (\sigma\tau V)$ and so $H$ acts on the isomorphism classes of simple $\ov FG$-modules.    Since two simple modules over an algebraically closed field are isomorphic if and only if they have the same character, this action is $H$-equivariantly isomorphic to the action of $H$ on $\mathrm{Irr}(G)$.  In particular, the stabilizer of the isomorphism class of $V$ is $\mathrm{Gal}(F(\mu_n)/F(\chi))$ where $\chi$ is the character of $V$, and hence the orbit of the isomorphism class of $V$ is in bijection with $\mathrm{Gal}(F(\chi)/F)$.

The theory of Schur indices states that if $V$ is a simple $\ov FG$-module with character $\chi$, then there is an integer $m(\chi)$, called the \emph{Schur index} of $\chi$, and a simple $FG$-module $W_{\chi}$  such that if $\sigma_1,\ldots, \sigma_r$ extend the elements of $\mathrm{Gal}(F(\chi)/F)$ to $F(\mu_n)$ with $\sigma_1$ the identity, then \[\ov F\otimes_F W_{\chi}=\left[\sigma_1 V\oplus\cdots \oplus \sigma_r V\right]^{m(\chi)}.\] Notice that $\sigma_1 V=V$ is a summand in $\ov F\otimes_F W_{\chi}$.    Moreover,  every simple $FG$-module is isomorphic to one of the form $W_{\chi}$. Of course, Galois conjugate characters give rise to the same simple $FG$-module.

With this setup, we can now drop the hypothesis that $F$ is algebraically closed from Theorem~\ref{t:alg.closed.case}.  This theorem was proved by \v{Z}mud$'$~\cite{Zmud1,zmud2} in the case that $G=N$ and the characteristic of $F$ does not divide the order of $G$.  The general result, in the case $G=N$, can be deduced from~\cite{zmud2} combined with~\cite{Nakayamafaithful}, but our proof is direct.

\begin{Thm}\label{t:gen.case}
Let $G$ be a finite group, $N\lhd G$ a normal subgroup and $F$ a field whose characteristic does not divide $|A(G)\cap N|$. Let $k\geq 1$.  Then the following are equivalent.
\begin{enumerate}
\item $S(G)\cap N$ is $k$-generated as a normal subgroup of $G$.
\item $A(G)\cap N$ is $k$-generated as a $\mathbb ZG$-module.
\item $A(G)_p\cap N$ is $k$-generated as an $\mathbb F_pG$-module for each prime divisor $p$ of $|A(G)|$.
\item $G$ has a completely reducible representation over $F$ with $k$ irreducible constituents that restricts to a faithful representation of $N$.
\end{enumerate}
\end{Thm}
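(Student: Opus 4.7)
The plan is to reduce the general case to Theorem~\ref{t:alg.closed.case} via the theory of Schur indices recalled just above the statement. The equivalences (1) $\Leftrightarrow$ (2) $\Leftrightarrow$ (3) are already contained in Proposition~\ref{p:kgen.soc}, so the new content is the equivalence of these conditions with (4). The two directions both rest on two observations: scalar extension $W \mapsto \overline{F}\otimes_F W$ does not alter the matrix entries of the associated matrix representation and in particular preserves its kernel; and two Galois conjugate irreducible $\overline{F}G$-modules have the same kernel (since Galois conjugation is applied entrywise to representing matrices).

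For (1) $\Rightarrow$ (4), fix an algebraic closure $\overline{F}$ of $F$. By Theorem~\ref{t:alg.closed.case} applied over $\overline{F}$, condition (1) produces an $N$-faithful semisimple $\overline{F}G$-module $V = V_1 \oplus \cdots \oplus V_k$ with each $V_i$ simple. Let $\chi_i$ be the character of $V_i$, and let $W_{\chi_i}$ be the simple $FG$-module from Schur-index theory; by construction $V_i$ appears as a direct summand of $\overline{F}\otimes_F W_{\chi_i}$. Setting $W = W_{\chi_1}\oplus\cdots\oplus W_{\chi_k}$, we obtain a completely reducible $FG$-module with $k$ irreducible constituents whose scalar extension $\overline{F}\otimes_F W$ contains $V$ as a submodule, and hence has faithful restriction to $N$. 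Since extension of scalars preserves the kernel of the associated representation, $W$ itself restricts faithfully to $N$.

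For (4) $\Rightarrow$ (1), let $W$ be a completely reducible $N$-faithful $FG$-module of length $k$, with simple summands $W_1,\ldots,W_k$. Schur-index theory gives, for each $i$, a simple $\overline{F}G$-module $V_i$ with character $\chi_i$ so that
\[\overline{F}\otimes_F W_i \cong \bigl[\sigma_{i,1} V_i \oplus \cdots \oplus \sigma_{i,r_i} V_i\bigr]^{m(\chi_i)}.\]
All $\sigma_{i,j} V_i$ share the kernel of $V_i$, so $\ker V_i$ equals the kernel of the representation afforded by $W_i$. Consequently the semisimple $\overline{F}G$-module $V := V_1\oplus\cdots\oplus V_k$ has length $k$ and its kernel equals the kernel of $W$, which meets $N$ trivially. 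Thus $V$ restricts faithfully to $N$, and Theorem~\ref{t:alg.closed.case} yields (1).

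The substantive work has already been carried out in the algebraically closed setting; the only real step here is the bookkeeping around Schur indices together with the kernel-preservation facts above, so there is no fundamentally new obstacle to overcome. If there is a place where care is needed it is the $(1) \Rightarrow (4)$ direction: one must invoke the specific version of Schur-index theory that singles out the particular conjugate $V_i$ (and not merely some conjugate) as appearing in $\overline{F}\otimes_F W_{\chi_i}$, so that the $N$-faithful sum $V_1\oplus\cdots\oplus V_k$ actually sits inside $\overline{F}\otimes_F W$.
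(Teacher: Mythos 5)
Your proposal is correct and follows essentially the same route as the paper: both directions reduce to Theorem~\ref{t:alg.closed.case} by passing between a simple $FG$-module $W_{\chi}$ and the Galois orbit of $V$ inside $\ov F\otimes_F W_{\chi}$, using that scalar extension does not change the afforded matrix representation and that Galois conjugate irreducibles share a kernel. Your closing remark about needing $V_i$ itself (not merely some conjugate) to appear in $\ov F\otimes_F W_{\chi_i}$ matches the paper's normalization $\sigma_1=\mathrm{id}$.
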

\begin{proof}
In light of Theorem~\ref{t:alg.closed.case}, it suffices to show that if $\ov F$ is an algebraic closure of $F$, then $G$ has a completely reducible representation over $F$ with $k$ irreducible constituents that restricts to a faithful representation of $N$ if and only if it has one over $\ov F$.

Suppose first that $V=V_1\oplus\cdots\oplus V_k$  is an $N$-faithful $\ov FG$-module with $V_1,\ldots, V_k$ simple and  let $\chi_i$ be the character of $V_i$, for $i=1,\ldots, k$.  Let $W=W_{\chi_1}\oplus\cdots\oplus W_{\chi_k}$; it affords a completely reducible representation $\rho$ over $F$ with $k$ irreducible constituents.  Then $\rho$ is also the matrix representation afforded by \[\ov F\otimes_F W \cong (\ov F\otimes_F W_{\chi_1})\oplus\cdots\oplus (\ov F\otimes_F W_{\chi_k}).\]  Since $V_i$ is a summand in $\ov F\otimes_F W_{\chi_i}$, this latter representation has a submodule isomorphic to $V$ and hence $\rho|_N$ is faithful.

Conversely, suppose that $G$ has a completely reducible representation over $F$ with $k$ irreducible constituents that restricts to a faithful representation of $N$.  Then  we can find irreducible characters $\chi_1,\ldots, \chi_k$ of $G$ over $\ov F$ such that $W=W_{\chi_1}\oplus\cdots \oplus W_{\chi_k}$ affords a representation $\lambda$ of $G$ over $F$ with $\lambda|_N$ faithful.  Let $V_i$ be a simple $\ov FG$-module affording the character $\chi_i$ and let $\rho_i$ be a corresponding irreducible representation.  If $\sigma V_i$ is a Galois conjugate of $V_i$, then $\sigma\rho_i$ is the corresponding irreducible representation and it has the same kernel as $\rho_i$.  Thus $\ker \rho_i$ is the kernel of the representation afforded by \[\ov F\otimes_F W_{\chi_i}\cong \left[\sigma_1 V_i\oplus\cdots \oplus \sigma_r V_i\right]^{m(\chi_i)}\] where the $\sigma_jV_i$ run over the (isomorphism classes of) Galois conjugates of $V_i$.  As $\ov F\otimes_F W\cong (\ov F\otimes_F W_{\chi_1})\oplus\cdots\oplus (\ov F\otimes_F W_{\chi_k})$ affords the representation $\lambda$ of $G$ (viewed now as a representation over $\ov F$ instead of $F$), we deduce that the representation $\rho=\rho_1\oplus\cdots\oplus \rho_k$ afforded by $V_1\oplus\cdots \oplus V_k$
 has the same kernel as $\lambda$, and hence restricts to faithful representation of $N$, as required.
\end{proof}

We remark that the lemmas in Nakayama~\cite{Nakayamafaithful} (which are only applied there to the case $N=G$), can be used to give a characterization of when $G$ has a completely reducible representation with $k$ composition factors that restricts to a faithful representation of $N$ in the language used by Shoda~\cite{Shoda} and Tazawa~\cite{Tazawa}.

Specializing to the case $N=G$, we obtain the \v{Z}mud$'$'s theorem, suitably generalized.

\begin{Cor}[\v{Z}mud$'$]\label{c:zhmud}
Let $G$ be a finite group and $k\geq 1$.  Let $F$ be a field of characteristic $\ell$.  Then the following are equivalent.
\begin{enumerate}
\item $\ell\nmid |A(G)|$ and $S(G)$ is $k$-generated as a normal subgroup of $G$.
\item $\ell\nmid |A(G)|$ and $A(G)$ is $k$-generated as a $\mathbb ZG$-module.
\item $\ell\nmid |A(G)|$ and $A(G)_p$ is $k$-generated as an $\mathbb F_pG$-module for each prime divisor $p$ of $|A(G)|$.
\item $G$ has a faithful completely reducible representation over $F$ with $k$ irreducible constituents.
\end{enumerate}
\end{Cor}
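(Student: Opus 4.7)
The plan is to derive this corollary directly from Theorem~\ref{t:gen.case} by specializing to the case $N=G$, combined with Proposition~\ref{p:faithful} to handle the characteristic hypothesis. When $N=G$, we have $A(G)\cap N=A(G)$ and $S(G)\cap N=S(G)$, and a representation of $G$ restricts to a faithful representation of $N=G$ precisely when it is itself faithful. So the content of conditions (1)--(3) of the corollary (apart from the $\ell\nmid |A(G)|$ clause) matches conditions (1)--(3) of Theorem~\ref{t:gen.case}, and condition (4) of the corollary matches condition (4) of that theorem.

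The only real work is reconciling the placement of the hypothesis $\ell\nmid|A(G)|$: in the corollary, this appears as part of conditions (1)--(3), whereas in Theorem~\ref{t:gen.case} it is a standing assumption. The way I would handle this is as follows. Assume first that (4) holds, i.e., $G$ admits a faithful completely reducible representation over $F$ with $k$ irreducible constituents. Then in particular $G$ admits some faithful completely reducible representation, so by Proposition~\ref{p:faithful} applied with $N=G$ we get $\ell\nmid|A(G)|$. Now that this hypothesis is in force, Theorem~\ref{t:gen.case} (with $N=G$) yields the equivalence of (4) with each of the three normal-subgroup/module generation conditions, which are exactly conditions (1), (2), (3) of the corollary (minus the $\ell\nmid|A(G)|$ clause, which we just derived).

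Conversely, each of (1), (2), (3) as stated in the corollary explicitly includes $\ell\nmid|A(G)|$, so we may again invoke Theorem~\ref{t:gen.case} with $N=G$ under this hypothesis to obtain (4). There is no nontrivial obstacle here: the entire argument amounts to quoting Theorem~\ref{t:gen.case} in the special case $N=G$ and noting via Proposition~\ref{p:faithful} that (4) by itself forces the characteristic condition to hold, so that including $\ell\nmid|A(G)|$ as part of (1), (2), (3) does not alter the equivalence.
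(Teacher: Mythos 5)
Your proposal is correct and matches the paper exactly: the corollary is stated there as an immediate specialization of Theorem~\ref{t:gen.case} to $N=G$, with Proposition~\ref{p:faithful} supplying the fact that condition (4) forces $\ell\nmid|A(G)|$. Your extra care in reconciling where the characteristic hypothesis sits is a correct (and welcome) elaboration of a step the paper leaves implicit.
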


In particular, specializing to the case $k=1$, we obtain Gasch\"{u}tz's theorem~\cite{Gaschutz} (but see~\cite{Szechtman} for a history of equivalent results proved by others, earlier).

\begin{Cor}[Gasch\"utz]\label{c:gaschutz}
Let $G$ be a finite group and $F$ a field of characteristic $\ell$.  Then the following are equivalent.
\begin{enumerate}
\item $\ell\nmid |A(G)|$ and $S(G)$ is generated as a normal subgroup of $G$ by a single element.
\item $\ell\nmid |A(G)|$ and $A(G)$ is a cyclic $\mathbb ZG$-module.
\item $\ell\nmid |A(G)|$ and $A(G)_p$ is a cyclic $\mathbb F_pG$-module for each prime divisor $p$ of $|A(G)|$.
\item $G$ has a faithful irreducible representation over $F$.
\end{enumerate}
\end{Cor}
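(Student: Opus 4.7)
The plan is simply to specialize Corollary~\ref{c:zhmud} to the case $k = 1$. Indeed, a completely reducible representation with exactly one irreducible constituent is (up to isomorphism of modules) an irreducible representation, so item (4) of Corollary~\ref{c:zhmud} for $k=1$ reads ``$G$ has a faithful irreducible representation over $F$,'' which is precisely item (4) of the corollary to be proved. The remaining items (1)--(3) are the literal $k=1$ instances of the corresponding items of Corollary~\ref{c:zhmud}, once one notes that ``$1$-generated as a normal subgroup'' means ``generated by a single element as a normal subgroup,'' ``$1$-generated as a $\mathbb ZG$-module'' means ``cyclic as a $\mathbb ZG$-module,'' and similarly ``$1$-generated as an $\mathbb F_pG$-module'' means ``cyclic as an $\mathbb F_pG$-module.''

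Thus no new argument is required: the equivalence of (1)--(4) is immediate from Corollary~\ref{c:zhmud}. The only thing worth spelling out is the harmless observation that in the definition of $k$-generated given in the paragraph preceding Proposition~\ref{p:finite.zgmod.gen}, the case $k=1$ allows the single generator to be trivial, but this is not an issue: if $A(G) = \{1\}$ then $A(G)$ is cyclic (generated by the identity) and $G$ has a faithful irreducible representation over $F$ whenever $\ell \nmid |A(G)| = 1$, by Proposition~\ref{p:faithful} applied with $N = G$ together with the fact that $A(G) = \{1\}$ forces $S(G) = T(G)$ to be a direct product of nonabelian simple groups (Theorem~\ref{t:socle.struct}), hence faithfully irreducibly represented (Proposition~\ref{p:prod.snag.case} after passing to an algebraic closure and using Schur indices as in Theorem~\ref{t:gen.case}).

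Since there is no genuine obstacle, I do not expect a hard step: the work has all been done in Theorems~\ref{t:alg.closed.case} and~\ref{t:gen.case} and in the structural results of Section~\ref{s:group}. The proof is therefore essentially a one-line invocation of Corollary~\ref{c:zhmud} with $k=1$.
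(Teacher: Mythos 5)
Your proposal is correct and matches the paper exactly: the paper offers no separate argument for Corollary~\ref{c:gaschutz}, presenting it precisely as the $k=1$ specialization of Corollary~\ref{c:zhmud} (itself the $N=G$ case of Theorem~\ref{t:gen.case}). Your additional remark about the degenerate case $A(G)=\{1\}$ is harmless and consistent with the paper's convention that ``$k$-generated'' means generated by $k$ or fewer elements.
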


\section{Faithful completely reducible representations of semigroups}\label{s:semigroup}
In this section, we extend \v{Z}mud$'$'s theorem to semigroups with a faithful completely reducible representation.  Rhodes characterized  the semigroups with a faithful irreducible representation over any field and with a faithful completely reducible representation over a field of characteristic zero in~\cite{Rhodeschar}.  Semigroups with a faithful completely reducible representation over an arbitrary field were characterized by Almeida, Margolis, Volkov and the author in~\cite{AMSV}.  All semigroups in this section shall be assumed finite.

\subsection{Green's relations and generalized group mapping congruences}
The preliminaries discussed here can be found in~\cite[Appendix~A]{qtheor} or~\cite[Chapter~1]{repbook}.
If $S$ is a semigroup, then $S^1$ denotes the monoid obtained by adjoining an identity to $S$  (even if it already had one).  Green's relations~\cite{Green} are fundamental to semigroup theory.  If $s,t\in S$, then $s\J t$ if $S^1sS^1=S^1tS^1$.   Note that $S/{\J}$ is partially ordered by $J\leq J'$ if $J\subseteq S^1J'S^1$.  A $\J$-class $J$ is called \emph{regular} if it contains an idempotent.  This is equivalent to each element of $J$ being von Neumann regular. We denote the set of regular $\J$-classes by $\mathrm{Reg}(S/{\J})$.   Fix an idempotent $e_J$ of each regular $\J$-class $J$ and let $G_J$ be the group of units of the monoid $e_JSe_J$ with identity $e_J$.  Then $G_J$ is called a \emph{maximal subgroup} of $J$ and it is independent of the choice of $e_J$ up to isomorphism.

Green's relations $\eL$ and $\R$ are defined by  $s\eL t$ if $S^1s=S^1t$ and $s\R t$ if $sS^1=tS^1$. If both $s\R t$ and $s\eL t$, then one writes $s\HH t$.  A key fact is that the $\HH$-class of $e_J$ is precisely $G_J$. We write $J_s$, $R_s$, $L_s$ and $H_s$ for the $\J$-, $\R$-, $\eL$- and $\HH$-classes of $s$, respectively.

 Finite semigroups enjoy the following property, termed \emph{stability}:
\[sS^1\cap J_s = R_s\qquad \text{and}\qquad S^1s\cap J_s=L_s.\]   A consequence of stability is that $s\J t$ if and only if there exists $r\in S$ with $s\R r\eL t$, if and only if there exists $r'\in S$ with $s\eL r'\R t$.
We shall also need to make use of Green's lemma~\cite{Green}.

\begin{Lemma}\label{l:Green}
Let $s\J t$.  Suppose that $s\eL r\R t$ with $r\in S$ and choose $u,w\in S^1$ with $r=us$ and $t=rw$.  Then $\psi\colon H_s\to H_t$ given by $\psi(x) = uxw$ is a bijection with $\psi(s) = t$.
\end{Lemma}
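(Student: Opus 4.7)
The plan is to express $\psi$ as the composition of two one-sided translation maps corresponding to the two legs of the zigzag $s \eL r \R t$, and reduce to showing that each is a bijection between the respective $\HH$-classes. From $s \eL r$, beyond the given $u$ with $r = us$, there exists $u' \in S^1$ with $s = u'r$; from $r \R t$, beyond the given $w$ with $t = rw$, there exists $w' \in S^1$ with $r = tw'$. I will show that $\lambda_u(x) := ux$ restricts to a bijection $H_s \to H_r$ with inverse $\lambda_{u'}$, and that $\rho_w(y) := yw$ restricts to a bijection $H_r \to H_t$ with inverse $\rho_{w'}$. Then $\psi = \rho_w \circ \lambda_u$ is a bijection $H_s \to H_t$, and $\psi(s) = usw = rw = t$ is immediate from the defining equations.

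For the left-translation step, the key identity is $u'us = s$, which follows at once from $s = u'r = u'us$. Two facts then need verification: (i) for $x \in H_s$, the image $ux$ actually lies in $H_r$; and (ii) $\lambda_{u'}$ is a two-sided inverse on the relevant $\HH$-classes. For (i), the left-congruence property of $\R$ applied to $x \R s$ gives $ux \R us = r$, placing $ux \in R_r$. Writing any $x \in L_s = S^1 s$ as $x = \alpha s$ and using $u'us = s$ yields $u'ux = \alpha u'us = \alpha s = x$, which simultaneously places $ux$ in $L_x = L_r$ (as $x \in S^1 (ux)$ and $ux \in S^1 x$) and establishes one direction of (ii); the symmetric computation with $uu'r = r$ gives the other direction on $H_r$. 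The analysis of $\rho_w$ is exactly dual, using that $\eL$ is a right congruence and the identity $tww' = t$.

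The only real obstacle is the bookkeeping of which Green's class a one-sided product lands in, but once the cancellation identities $u'us = s$ and $tww' = t$ are written down, both well-definedness and invertibility drop out from the congruence properties of $\eL$ and $\R$. Notably, this argument uses neither stability nor finiteness of $S$, only the defining properties of the $\eL$- and $\R$-relations, so the lemma holds in arbitrary semigroups.
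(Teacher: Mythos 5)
The paper states this as Green's lemma and simply cites Green's original article, giving no proof of its own, so your proposal must stand on its own merits. The overall strategy --- factoring $\psi$ as $\rho_w\circ\lambda_u$ and establishing the two one-sided translation bijections --- is exactly the classical route, and your closing remarks (that $\psi(s)=t$ is immediate from the defining equations and that neither stability nor finiteness is needed) are correct.

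However, the central cancellation computation is wrong as written. You take $x\in H_s$, write $x=\alpha s$ using $x\eL s$, and then assert $u'ux=u'u\alpha s=\alpha u'us=\alpha s=x$. The middle equality moves $u'u$ past $\alpha$, which is not legitimate in a noncommutative semigroup; the identity $u'us=s$ only says that $u'u$ fixes $s$ under left multiplication, and this propagates to elements of the form $s\beta$, not $\alpha s$. The correct move is to use the \emph{other} Green relation: since $x\in H_s\subseteq R_s$ one has $x=s\beta$ for some $\beta\in S^1$, and then $u'ux=u'us\beta=s\beta=x$ by associativity alone. (This is precisely why the one-sided Green's lemma attached to $s\eL r$ is a statement about a bijection $R_s\to R_r$ that preserves $\eL$-classes, rather than a map defined via the $\eL$-decomposition.) The same confusion reappears on the dual side: the identity you quote, $tww'=t$, does not follow from $t=rw$ and $r=tw'$; what does follow is $rww'=r$ and $tw'w=t$, and the cancellation there must be applied to elements of $L_r$ written as $\gamma r$, giving $\gamma rww'=\gamma r$. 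Once the decompositions are swapped to the correct sides, every step you outline --- landing in $H_r$, the two-sided inverse, preservation of the complementary Green class, and the composition --- goes through verbatim, so the gap is a genuine error in execution rather than in strategy.
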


Krohn and Rhodes introduced generalized group mapping congruences in their work on Krohn-Rhodes complexity of finite semigroups~\cite{KRannals}.  It was Rhodes who recognized the relevance for representation theory~\cite{Rhodeschar}.

If $J$ is a regular $\J$-class of $S$, then the \emph{generalized group mapping congruence} $\equiv_J$ associated to $J$ is defined by $s\equiv_J t$ if, for all $x,y\in J$, \[xsy\in J\iff xty\in J\] and if $xsy,sty\in J$, then $xsy=xty$.  It is known that $\equiv_J$ is a congruence.  The reader is referred to~\cite[Section~4.6]{qtheor} for details about the congruence $\equiv_J$ and its universal property.  We denote by $\GGM$ the intersection of the congruences $\equiv_J$ over all regular $\J$-classes of $S$.

\subsection{Semigroup representation theory}
The reader is referred to~\cite[Chapter~5]{repbook} for the basic facts about representations of finite semigroups that we shall use.  This reference works with monoids, but the results for semigroups are the same, cf.~\cite[Chapter~5]{CP}. Let $K$ be a field and denote by $KS$ the semigroup  algebra of $S$ over $K$.  Note that $KS$ is a finite dimensional $K$-algebra but need not be unital.  Readers who are worried about such things may assume that $S$ is a monoid and they will not really miss out on anything.  If $R$ is a unital ring, then a  \emph{unitary} $R$-module $M$ is one where $1m=m$ for all $m\in M$. By a $KS$-module, we mean a $K$-vector space with an action of $S$ by $K$-linear maps (which then extends to $KS$ linearly); this is equivalent to a unitary $KS^1$-module.

Recall that a $KS$-module $V$ is \emph{simple} if $KS\cdot V\neq 0$ and the only submodules of $V$ are itself and the zero submodule.  A matrix representation afforded by a simple module is called \emph{irreducible}.  A \emph{semisimple} module is one isomorphic to a direct sum of simple modules. Note that the zero module is semisimple but not simple.  The simple summands are called \emph{irreducible constituents} or composition factors.   A  matrix representation afforded by a semsimple module is called \emph{completely reducible}.  The zero module affords a degree $0$ matrix representation.  A matrix representation of a semigroup is \emph{faithful} if it is an injective homomorphism.  Notice that the degree $0$ matrix representation is a faithful representation of both the trivial and the empty semigroup.

  The Jacobson radical of a ring $R$ will be denoted by $J(R)$.  The algebra $KS/J(KS)$ is semisimple and hence unital, and every semisimple $KS$-module is inflated from a $KS/J(KS)$-module.  The congruence associated to the composition $S\hookrightarrow KS\to KS/J(KS)$ is called the \emph{Rhodes radical} of $S$ with respect to $K$.  This congruence was described for the field of complex numbers in~\cite{Rhodeschar}, and for general fields in~\cite{AMSV}.  See also~\cite[Chapter~11]{repbook}.  Of course, $S$ admits a faithful completely reducible representation if and only if the Rhodes radical is trivial, that is, the equality relation.    However, we will avoid making specific reference to the results of~\cite{AMSV}, except for in a result showing that if the Rhodes radical is trivial, then the minimum length of a faithful representation can be achieved by a faithful completely reducible representation.

If $V$ is a simple $KS$-module, then a regular $\J$-class $J$ of $S$ is an \emph{apex} of $V$ if $sV=0$ if and only if $J\nsubseteq S^1sS^1$, i.e., $J\nleq J_s$. In other words, $J$ is the unique minimum $\J$-class not annihilating $V$.  Note that if $W$ is a $KS$-module and $J$ is a regular $\J$-class, then $e_JW$ is (a unitary) $KG_J$-module.   Clifford-Munn-Ponizovksy theory states the following.

\begin{Thm}[Clifford-Munn-Ponizovsky]\label{t:CMP}
Let $S$ be a finite semigroup and $K$ a field.
\begin{enumerate}
\item Every simple $KS$-module has an apex.
\item If $J$ is the apex of a simple $KS$-module $V$, then $e_JV$ is a simple $KG_J$-module.
\item Given a simple $KG_J$-module $V$, there is up to isomorphism one and only one simple $KS$-module $W$ with apex $J$ and $e_JW\cong V$.
\end{enumerate}
\end{Thm}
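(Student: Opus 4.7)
The proof of Theorem~\ref{t:CMP} follows the classical Clifford-Munn-Ponizovsky argument as developed in \cite{repbook}. Given a simple $KS$-module $V$, first observe that $I_V = \{s \in S : sV = 0\}$ is an ideal of $S$ when nonempty, since $sV = 0$ implies $(st)V = s(tV) \subseteq sV = 0$ and $(ts)V = t(sV) = 0$. Passing to the Rees quotient $S' = S/I_V$, the module $V$ becomes simple over $S'$ with every nonzero element of $S'$ acting nontrivially. Since $S'$ is a finite semigroup with zero, $S'$ admits a $0$-minimal ideal $M$, which by the Rees--Suschkewitsch theorem is either null or completely $0$-simple. The null case is ruled out: if $M^2 = 0$, then $MV$ is a nonzero $KS$-submodule of $V$ (hence $MV = V$ by simplicity), but then $V = MV \subseteq M^2 V = 0$, a contradiction. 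So $M$ is completely $0$-simple and $M \setminus \{0\}$ is a single regular $\J$-class, pulling back to a regular $\J$-class $J$ of $S$. Uniqueness of this minimal $\J$-class follows similarly: distinct $0$-minimal ideals $M_1, M_2$ acting nontrivially would give $M_1 M_2 \subseteq M_1 \cap M_2 = \{0\}$ and hence $V = M_1 V = M_1(M_2 V) = 0$. The apex condition $sV = 0 \iff J \not\leq J_s$ is then immediate from the construction.

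\textbf{Part (2): $e_J V$ is a simple $K G_J$-module.} Set $e = e_J$; we have $eV \neq 0$ by the apex condition. The key structural observation is that $eSe \setminus G_J$ is a two-sided ideal of $eSe$ consisting exactly of elements of $\J$-class strictly below $J$: stability of $S$ forces any $ese \in eSe \cap J_e$ to lie in $eS^1 \cap J_e = R_e$ and in $S^1 e \cap J_e = L_e$, hence in $H_e = G_J$, while elements of smaller $\J$-class remain so after multiplication by elements of $eSe$. By the apex condition, every element of $eSe \setminus G_J$ annihilates $V$ and therefore $eV$, so $eV$ factors through the quotient $eKSe \twoheadrightarrow KG_J$ and becomes a $KG_J$-module. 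For simplicity, let $0 \neq U \subseteq eV$ be a $KG_J$-submodule; then $KS \cdot U$ is a nonzero $KS$-submodule of $V$, hence equals $V$. Applying $e$ and using $u = eu$ for $u \in U$ yields $eV = e(KS \cdot U) = (eKSe)U = (KG_J)U + K[eSe \setminus G_J]\cdot U = U + 0 = U$, establishing simplicity.

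\textbf{Part (3): bijection with simple $KG_J$-modules.} For \emph{existence}, given a simple $KG_J$-module $U$, I view $U$ as an $eKSe$-module through the quotient $eKSe \twoheadrightarrow KG_J$, form the induced $KS$-module $L(U) = KSe \otimes_{eKSe} U$, and take its unique simple quotient $W$ with $eW \neq 0$; then $eW \cong U$ by the idempotent adjunction, and the apex of $W$ must be $J$ since elements of strictly smaller $\J$-class annihilate $L(U)$ and hence $W$. For \emph{uniqueness}, suppose $V, V'$ are simple $KS$-modules with apex $J$ and $eV \cong eV'$ as $KG_J$-modules. The standard idempotent adjunction $\Hom_{KS}(KSe \otimes_{eKSe} M,\, W) \cong \Hom_{eKSe}(M,\, eW)$ lifts the isomorphism to surjections from $L(eV)$ onto both $V$ and $V'$, so both are isomorphic to the unique simple quotient of $L(eV)$ with nonzero $e$-part, giving $V \cong V'$.

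\textbf{Main obstacle.} The crux of the proof is Part~(2): identifying $e_J S e_J \setminus G_J$ as the two-sided ideal of $e_J S e_J$ consisting of elements of $\J$-class strictly below $J$, and deducing that this ideal annihilates $e_J V$ whenever $V$ has apex $J$. This reduction of the $e_J K S e_J$-module structure on $e_J V$ to a $KG_J$-module structure is what makes the bijection work, and it relies essentially on the stability of finite semigroups recalled just above the theorem. The remaining parts reduce to classical Rees--Suschkewitsch structure theory (Part~(1)) and routine idempotent-adjunction arguments (Part~(3)).
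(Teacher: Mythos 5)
The paper offers no proof of Theorem~\ref{t:CMP}: it is quoted as the classical Clifford--Munn--Ponizovsky theorem with a pointer to \cite[Chapter~5]{repbook}, so there is no in-paper argument to compare against. Your proof is essentially the standard one from that reference --- apex via the ($0$-)minimal ideal of the Rees quotient by the annihilator, identification of $e_JSe_J\cap J$ with $G_J$ via stability, and the correspondence via the idempotent adjunction for $e=e_J$ --- and Parts~(1) and~(2) are correct as written (in Part~(1) you should just note that when $I_V=\emptyset$ one uses the minimal ideal of $S$ itself rather than a $0$-minimal ideal, but the null/completely simple dichotomy goes through unchanged).

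There is one step in Part~(3) that is genuinely wrong as stated, though easily repaired: you claim that elements $s$ with $J\nleq J_s$ annihilate $L(U)=KSe\otimes_{eKSe}U$. That is false in general; an element $ste\otimes u$ with $J_{ste}<J$ need not vanish in the tensor product, and indeed $L(U)$ typically has a nonzero radical on which such $s$ act nontrivially. The correct statement is that $I=\{s\in S: J\nleq J_s\}$ is an ideal of $S$ and $I\cdot L(U)$ is a $KS$-submodule killed by $e$, because $e\,(ste\otimes u)=este\otimes u=e\otimes (este)u=0$ as $este\in e S e\setminus G_J$ acts as zero on $U$ through the quotient $eKSe\twoheadrightarrow KG_J$. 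Since every proper submodule of $L(U)$ is killed by $e$ (a submodule $P$ with $eP\neq 0$ contains $e\otimes U$, which generates $L(U)$), $I\cdot L(U)$ lies in the unique maximal submodule, so $I$ annihilates the simple quotient $W$ --- which, combined with $eW\cong U\neq 0$ and Part~(1), is exactly what forces the apex of $W$ to be $J$. With that one-line repair the argument is complete and matches the standard treatment.
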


If $V$ is a simple $KG_J$-module, we shall usually denote by $\til V$ a simple $KS$-module with apex $J$ and $e_J\til V\cong V$.  The simple module $\til V$ is uniquely determined up to isomorphism.

The following result was first observed by Rhodes~\cite{Rhodeschar}.  We include a proof for completeness because it is fundamental to all that we do, and also because of its elegance. If $\alpha\colon S\to T$ is a semigroup homomorphism, then $\ker\alpha$, the \emph{kernel} of $\alpha$, is the associated congruence $s\mathrel{\ker \alpha} s'$ if $\alpha(s)=\alpha(s')$.

\begin{Lemma}[Rhodes]\label{l:rhodes}
Let $\rho\colon S\to M_n(K)$ be an irreducible representation with apex $J$.  Then ${\equiv_J}\subseteq {\ker \rho}$.
\end{Lemma}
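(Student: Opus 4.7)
The plan is to let $V$ be the simple $KS$-module affording $\rho$ and show that $s \equiv_J t$ forces $\rho(s) = \rho(t)$. The argument rests on two auxiliary facts about $V$: (i) $V$ is generated by the action of $J$, i.e., $V = KJ \cdot V$; and (ii) no nonzero element of $V$ is annihilated by every element of $J$.

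For (i), fix $e = e_J$ and a nonzero $v_0 \in eV$ (which exists since $J$ is the apex of $V$). Simplicity gives $V = KS \cdot v_0$, and since $v_0 = ev_0$, each $rv_0$ with $r \in S$ equals $(re)v_0$. Since $re \in Se$, stability forces $re \in L_e \subseteq J$ or $J_{re} < J$; in the latter case $re$ annihilates $V$ by the apex property. Hence $V$ is spanned by $\{jv_0 : j \in L_e\}$, so $V \subseteq KJ \cdot V$. For (ii), let $W = \{v \in V : KJ \cdot v = 0\}$. This is a $KS$-submodule: for $r \in S$, $x \in J$, and $w \in W$, the element $xr$ is either in $J$ or satisfies $J_{xr} < J$, and $(xr) \cdot w = 0$ in either case (by the definition of $W$ in the first, by the apex property in the second), so $x(rw) = 0$ for all $x \in J$, giving $rw \in W$. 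By simplicity $W = 0$ or $W = V$, and $W = V$ contradicts (i) together with $V \neq 0$.

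Given (i) and (ii), for any $y \in J$ and any $x \in J$ the definition of $\equiv_J$ says that $xsy$ and $xty$ are either both in $J$ and equal, or both strictly below $J$ in the $\J$-order, so either way they act identically on $V$. Hence $\rho(x)(\rho(sy) - \rho(ty)) = 0$ for every $x \in J$, so $(\rho(sy) - \rho(ty))V \subseteq W = 0$, i.e., $\rho(sy) = \rho(ty)$. Therefore $(\rho(s) - \rho(t))\rho(y) = 0$ for every $y \in J$, giving $(\rho(s) - \rho(t))(KJ \cdot V) = 0$; since $KJ \cdot V = V$, we conclude $\rho(s) = \rho(t)$. The main obstacle I expect is discovering the correct intermediate statement—namely $\rho(sy) = \rho(ty)$ for all $y \in J$: a direct approach that compares $\rho(s)$ and $\rho(t)$ on $v_0 \in eV$ via $\equiv_J$ with $x = y = e$ gets stuck on whether $ese \in J$ follows from $se \in J$ (which can fail when the $\HH$-class $L_e \cap R_{se}$ is not a group), and inserting an arbitrary $y \in J$ on the right, together with the two-sidedness of $\equiv_J$, cleanly sidesteps this.
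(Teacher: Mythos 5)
Your proof is correct, but it reaches the conclusion by a genuinely different mechanism than the paper. Both arguments share the same combinatorial core: sandwiching $s$ and $t$ between elements of $J$ and reading off from the definition of $\equiv_J$ that $\rho(xsy)=\rho(xty)$ for all $x,y\in J$ (using the apex property to kill the elements that fall strictly below $J$). Where you diverge is in how the outer factors $\rho(x)$ and $\rho(y)$ are stripped away. The paper works ring-theoretically: the unital algebra $A$ spanned by $\rho(S)$ and $I$ is simple by Wedderburn--Artin (semisimple with a faithful simple module), $\rho(J)$ spans a nonzero ideal of $A$, hence $I=\sum_{j\in J}c_j\rho(j)$, and multiplying the sandwich identity by this expression on both sides gives $\rho(s)=\rho(t)$ in one line. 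You instead prove two module-theoretic facts about the simple module $V$ --- that $V=KJ\cdot V$ (via stability applied to $Se$ and simplicity applied to a nonzero $v_0\in e_JV$) and that no nonzero vector of $V$ is annihilated by all of $J$ (the annihilated vectors form a submodule, which cannot be all of $V$ by the first fact) --- and then use these to cancel $\rho(x)$ on the left and $\rho(y)$ on the right in two successive steps. Your route is more elementary in that it avoids Wedderburn--Artin and the simplicity of the enveloping algebra entirely, at the cost of being somewhat longer; your facts (i) and (ii) are essentially the standard Clifford--Munn--Ponizovsky statements that the apex $\J$-class acts generatingly and left-faithfully on a simple module with that apex. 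The paper's version buys the stronger intermediate fact that $\rho(J)$ spans the whole image algebra, which is occasionally useful elsewhere, but for this lemma both arguments are complete and correct.
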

\begin{proof}
Let $A$ be the unital subalgebra of $M_n(K)$ spanned by $\rho(S)$ and the identity matrix.  Then, since $\rho$ is irreducible, $K^n$ is a faithful simple $A$-module.  Therefore, $J(A)=0$ and so $A$ is semisimple. But a semisimple algebra with a faithful simple module is simple by the Wedderburn structure theorem.  By definition of an apex, $\rho(s)=0$ if and only if $J\nsubseteq S^1sS^1$.  Thus $\rho(S^1JS^1)\subseteq \rho(J)\cup \{0\}$ and hence $\rho(J)$ spans a nonzero ideal of the simple algebra $A$.  Therefore, the identity matrix is a linear combination $I=\sum_{j\in J} c_j\rho(j)$ of elements of $\rho(J)$.  Suppose that $s\equiv_J t$.  Then after multiplying on the left and right by our expression for $I$, we have that
\begin{align*}
\rho(s) &= \sum_{j,j'\in J}c_jc_{j'}\rho(jsj')\\
\rho(t) &= \sum_{j,j'\in J}c_jc_{j'}\rho(jtj').
\end{align*}
We claim that $\rho(jsj')=\rho(jtj')$ for all $j,j'\in J$.  First suppose that $\rho(jsj')=0$.  Then $J\nsubseteq S^1jsj'S^1$ and so $jsj'\notin J$.
Therefore, $jtj'\notin J$ by definition of $\equiv_J$, and so $\rho(jtj')=0$ as $J_{jtj'}< J$.  On the other hand, if $\rho(jsj')\neq 0$, then $J\leq J_{jsj'}\leq J$, and so $jsj'\in J$.  Therefore, $jsj'=jtj'$ by definition of $\equiv_J$ and so $\rho(jsj')=\rho(jtj')$.  It now follows that $\rho(s)=\rho(t)$, as required.
\end{proof}

As a consequence, we obtain the following result of Rhodes~\cite{Rhodeschar} (see also~\cite[Theorem~3.16]{AMSV}).

\begin{Cor}\label{c:faithful.ggm}
If a finite semigroup $S$ admits a faithful completely reducible representation over some field, then the congruence $\GGM$ is trivial.
\end{Cor}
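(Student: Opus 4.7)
The plan is to deduce this immediately from Rhodes's lemma (Lemma~\ref{l:rhodes}) together with Clifford--Munn--Ponizovsky (Theorem~\ref{t:CMP}). The key observation is that a faithful completely reducible representation is a direct sum of irreducible representations whose kernels jointly separate points, and each such irreducible kernel already contains one of the congruences $\equiv_J$ in the intersection defining $\GGM$.

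First I would let $\rho\colon S\to M_n(K)$ be a faithful completely reducible representation, and decompose the corresponding semisimple $KS$-module as $V_1\oplus\cdots\oplus V_m$ with each $V_i$ simple. Let $\rho_i\colon S\to \End_K(V_i)$ be the afforded irreducible matrix representation. Then $\ker\rho=\bigcap_{i=1}^m\ker\rho_i$, and faithfulness of $\rho$ means this intersection is the equality relation.

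Next I would invoke Theorem~\ref{t:CMP}(1) to obtain, for each $i$, a regular $\J$-class $J_i$ which is the apex of $V_i$. Lemma~\ref{l:rhodes} then gives ${\equiv_{J_i}}\subseteq{\ker\rho_i}$ for each $i$. Since $\GGM$ is defined as the intersection of $\equiv_J$ over \emph{all} regular $\J$-classes $J$, in particular $\GGM\subseteq\equiv_{J_i}$ for each $i$, whence
\[
{\GGM}\ \subseteq\ \bigcap_{i=1}^m{\equiv_{J_i}}\ \subseteq\ \bigcap_{i=1}^m{\ker\rho_i}\ =\ {\ker\rho}.
\]
As $\rho$ is faithful, $\ker\rho$ is the equality relation, so $\GGM$ is trivial.

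There is no real obstacle here: all the heavy lifting is done by Rhodes's lemma, and the only thing to observe is that the intersection defining $\GGM$ is indexed by all regular $\J$-classes, so every apex that arises among the irreducible constituents is automatically included.
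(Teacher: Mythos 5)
Your argument is correct and is essentially the same as the paper's: both deduce from Theorem~\ref{t:CMP} and Lemma~\ref{l:rhodes} that $\GGM$ is contained in the kernel of every irreducible (hence every completely reducible) representation, so faithfulness forces $\GGM$ to be trivial. You have merely written out the intersection-of-kernels bookkeeping that the paper leaves implicit.
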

\begin{proof}
It follows immediately from Theorem~\ref{t:CMP} and Lemma~\ref{l:rhodes} that $\GGM$ is contained in the kernel of any irreducible representation of $S$ and hence in the kernel of any completely reducible representation. The result follows. 
\end{proof}

By a \emph{null $KS$-module} we mean a one-dimensional module $W$ such that $KS\cdot W=0$.  Null modules are not simple but they can appear as composition factors in a composition series.    A \emph{composition series} \[0=V_0\leq V_1\leq \cdots \leq V_k=V\] for a $KS$-module $V$ is an unrefinable series of submodules.  Some of the composition factors $V_i/V_{i-1}$ can be null.  In fact, each $KS$-module is a unitary $KS^1$-module and a $KS$-composition series is the same thing as a $KS^1$-composition series in the usual sense.  The null composition factors correspond to the $KS^1$-composition factors with apex the $\J$-class of $1$, i.e., that are annihilated by $S$.
One last result that we shall need before turning to our main theorem is the following.

\begin{Prop}\label{p:proj.to.ss}
Let $S$ be a semigroup with a faithful completely reducible representation over a field $K$.  Let $V$ be a finite dimensional $KS$-module affording a faithful representation of $S$ and suppose that $V_1,\ldots, V_k$ are the non-null composition factors of $V$. Then $V_1\oplus\cdots\oplus V_k$ affords a faithful representation of $S$.
\end{Prop}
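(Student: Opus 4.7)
The plan is to work with the image algebra $B = \operatorname{Im}(KS^1 \to \End(V))$, a finite-dimensional $K$-algebra. Since $V$ is $S$-faithful, $S$ embeds into $B$ and $V$ is a faithful $B$-module. I would invoke the standard fact that every simple $B$-module must appear as a composition factor of any faithful $B$-module over a finite-dimensional algebra (one proof: the intersection of the annihilators of the composition factors of $V$ equals $J(B)$, so if a simple $B$-module were missing, its annihilator would properly contain $J(B)$, contradicting the Wedderburn description of $B/J(B)$). Hence $V_1, \ldots, V_k$ exhaust the non-null simple $B$-modules.

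Consequently $V' = V_1 \oplus \cdots \oplus V_k$ is a semisimple $B$-module containing every non-null simple summand, making the induced map $B/J(B) \to \End(V')$ injective. The $S$-action on $V'$ factors as $S \hookrightarrow B \twoheadrightarrow B/J(B) \hookrightarrow \End(V')$, reducing the $S$-faithfulness of $V'$ to the injectivity of the composite $S \to B/J(B)$.

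I would establish this injectivity by contradiction. Suppose distinct $s, t \in S$ have identical images in $B/J(B)$, so that $\rho(s) - \rho(t)$ is a nonzero nilpotent element of $B$. Invoking the hypothesis that $S$ admits a faithful completely reducible representation, Corollary~\ref{c:faithful.ggm} yields that the $\GGM$ congruence is trivial, so some regular $\J$-class $J$ satisfies $s \not\equiv_J t$. By Rhodes's Lemma~\ref{l:rhodes}, every simple $KS^1$-module with apex $J$ has $\equiv_J$ in its kernel, and one expects a specific simple $KS^1$-module with apex $J$ to actually witness $s \not\equiv_J t$ by separating these elements.

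The main obstacle I foresee is completing this last argument: one must show that a distinguishing simple $KS^1$-module with apex $J$ is necessarily a simple $B$-module (equivalently, that its annihilator in $KS^1$ contains the annihilator of $V$), so that by the first paragraph's fact it must appear as some $V_i$, contradicting $s, t$ acting identically on each $V_i$. Executing this step will require a careful apex-based analysis bridging the abstract separation of $S$ guaranteed by $\GGM$ triviality with the concrete composition factors furnished by $V$, likely by tracking how the apex structure of simple $KS^1$-modules interacts with the surjection $KS^1/J(KS^1) \twoheadrightarrow B/J(B)$ onto a sub-Wedderburn factor.
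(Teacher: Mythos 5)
Your setup is sound and in fact coincides with the paper's own reduction: identifying $B$ with the image of $KS^1$ in $\End(V)$, your computation that the intersection of the annihilators of the composition factors is nilpotent (hence equals $J(B)$) is exactly the paper's observation that the kernel of the induced map $A\to\End(V_1\oplus\cdots\oplus V_k)$ is nilpotent, and both arguments thereby reduce the proposition to the injectivity of $S\to B/J(B)$. (One small imprecision: if $V$ has a null composition factor then $B/J(B)\to\End(V')$ is \emph{not} injective, since the Wedderburn factor of the null module is killed; but every element of $S$ maps to $0$ in that factor, so the reduction for $S$-faithfulness still stands.)

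The gap is the final step, which you explicitly leave open, and it is the entire content of the proposition. Your proposed route does not close it, for two reasons. First, triviality of $\GGM$ together with Lemma~\ref{l:rhodes} cannot produce an irreducible representation separating $s$ and $t$: Rhodes's lemma gives only the containment ${\equiv_J}\subseteq{\ker\rho}$ for $\rho$ of apex $J$, i.e., an upper bound on how much such a $\rho$ can separate, and there exist semigroups with $\GGM$ trivial admitting no faithful completely reducible representation over $K$ at all; you must use the full hypothesis, namely triviality of the Rhodes radical. Second, and more seriously, even granting a simple $KS^1$-module $\theta$ with $\theta(s)\neq\theta(t)$, there is no reason for $\theta$ to be a $B$-module (equivalently, a composition factor of $V$): faithfulness of $V$ does not force its composition factors to exhaust the irreducible representations of $S$, and the assertion that \emph{some} separating simple module occurs among them is precisely the proposition being proved, so your plan reduces the statement to itself. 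Closing the gap requires genuine structural input relating the congruence induced by $S\to B/J(B)$ to the Rhodes radical; the paper imports this from \cite{AMSV} (Lemma~3.1 and Theorem~3.6, the explicit description of the Rhodes radical), and without that citation or an equivalent argument (e.g., an analysis in the style of Lemma~\ref{l:is.faithful} showing that a faithful, not necessarily semisimple, $V$ must possess for each irreducible regular $\J$-class $J$ composition factors of apex $J$ whose restriction to $G_J$ is $N_J$-faithful), the proof is incomplete.
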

\begin{proof}
Let $0=W_0\leq W_1\leq\cdots \leq W_r=V$ be a composition series for $V$.
Let $\rho\colon S\to M_n(K)$ be a representation afforded by $V$ and let $A$ be the subalgebra of $M_n(K)$ spanned by $\rho(S)$.  Note that $S$ embeds via $\rho$ as a subsemigroup of $A$.  Moreover, each $W_i$ is naturally an $A$-module and so the $V_i$ are simple $A$-modules and the $KS$-module structure factors through $A$. The matrix representation of $KS$ afforded by $V_1\oplus\cdots\oplus V_k$ then factors through a homomorphism $\psi\colon A\to M_m(K)$.  We claim that $\ker \psi$ is nilpotent: in fact,  $\ker \psi^r=0$.  For if $a_1,\ldots, a_r\in \ker \psi$, then $a_i[W_i/W_{i-1}] =\{0\}$ since either $W_i/W_{i-1}$ is null, or $W_i/W_{i-1}\cong V_j$ for some $1\leq j\leq k$, and so $a_iW_i\subseteq W_{i-1}$.  Thus $a_1\cdots a_rV=a_1\cdots a_rW_r\subseteq W_0=0$.  It now follows from~\cite[Lemma~3.1]{AMSV},~\cite[Theorem~3.6]{AMSV} and triviality of the Rhodes radical that $\psi|_S$ is injective and so $V_1\oplus\cdots\oplus V_k$ affords a faithful representation of $S$.
\end{proof}

\subsection{A \v{Z}mud$'$ theorem for semigroups}
We now compute the minimum number of irreducible constituents of a faithful completely reducible representation of a finite semigroup, provided that it has one.  There are two motivating examples that are worth considering, as they led the author to the main result.  Suppose that $S$ is a finite meet semilattice, that is, $S$ is a finite poset with binary meets (greatest lower bounds).  We can make $S$ a semigroup via its meet operation.  Semilattices are precisely the commutative semigroups in which each element is idempotent.  A famous result of Solomon~\cite{Burnsidealgebra} shows that $KS\cong K^S$ and so $KS$ is semisimple and each simple $KS$-module is one-dimensional.  Thus the minimum number of irreducible constituents in a faithful representation of $S$ is precisely the minimum degree of a faithful representation of $S$.  This was computed by Mazorchuk and the author in~\cite{effective}.  An element $s\in S$ is \emph{join irreducible} if it cannot be expressed as a join (least upper bound) of some subset of elements strictly below it. Note that the minimum element of $S$ is not join irreducible since it is the join of the empty subset.   We showed~\cite{effective}  that the minimum degree of a faithful representation of $S$ is the number of join irreducible elements.  Note that each $\J$-class of a semilattice consists of a single idempotent, and $S$ and $S/{\J}$ are isomorphic posets.  Thus we should expect the partial order on regular $\J$-classes to play a role.

On the other hand, let $G$ be a nontrivial finite group and $N$ a proper, nontrivial normal subgroup.  The power set $P(G)$ is a semigroup under the usual multiplication of subsets: $AB=\{ab\mid a\in A,b\in B\}$.  Let $S$ be the subsemigroup consisting of the singletons and the elements of $G/N$ (viewed as cosets).  Identifying $g\in G$ with $\{g\}$, we can view $S$ as $G\cup G/N$ where elements of $G$ and $G/N$ multiply as usual and $g(hN) = ghN$, $(hN)g = hgN$ for $h,g\in G$. In particular, $G/N$ is an ideal of $S$.  Note that $S$ is a monoid with group of units $G$,  and the $\J$-classes of $S$ are $G$ and $G/N$, both of which are regular.  To simplify the discussion let us assume for the moment that we are working over a field of characteristic $0$.

Each irreducible representation of $G$ extends to $S$ by mapping $G/N$ to $0$, and these are the irreducible representations with apex $G$.  Also, each irreducible representation $\psi$ of $G/N$ extends to $S$ by sending $g\in G$ to $\psi(gN)$, and these are the irreducible representations with apex $G/N$.  Since all the irreducible representations with apex $G$ agree on $G/N$, in order to separate the elements of $G/N$ from each other we must use a direct sum of $k$ irreducible representations with apex $G/N$ where $k$ is the minimum number of normal generators of $S(G/N)$ by Corollary~\ref{c:zhmud}.    This direct sum will also separate elements of $G$ in different cosets of $N$.  Therefore, to separate the remaining elements of $G$ from each other and from $G/N$, we just need to find a completely reducible representation of $G$ whose restriction to $N$ is faithful with the minimum number of irreducible constituents (and extend it to $S$ by mapping $G/N$ to $0$). This number is given by the minimum number of normal generators of $S(G)\cap N$ by Theorem~\ref{t:gen.case}.  This was our motivation for proving Theorem~\ref{t:gen.case} in the first case.

We follow the convention here that an empty meet (join) in a lattice is the top (bottom) of the lattice.  In particular, we interpret the intersection of an empty collection of subsets of a set $X$ to be the set $X$.

Let us define a regular $\J$-class $J$ to be \emph{reducible} if \[\bigcap_{J'\in \mathrm{Reg}(S/{\J}), J'<J} {\equiv_{J'}}\subseteq {\equiv_J}.\]   Equivalently, $J$ is reducible if whenever $s\not\equiv_J t$, there is a regular $\J$-class $J'<J$ with $s\not\equiv_{J'} t$.  If a regular $\J$-class  is not reducible, then we say it is \emph{irreducible}.  The following proposition motivates the terminology.

\begin{Prop}\label{p:join.irr}
Let $S$ be a meet semilattice and $e\in S$.  Then $J_e$ is irreducible if and only if $e$ is join irreducible.
\end{Prop}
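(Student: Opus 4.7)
The plan is to compute $\equiv_{J_e}$ explicitly in the meet semilattice $S$, reducing the claim to an elementary order-theoretic statement. Since multiplication in $S$ is the meet, $S^1 s S^1 = \{x \in S : x \leq s\}$, so $s \J t$ iff $s = t$; thus each $\J$-class is a singleton $\{e\}$ (automatically regular as $e = e \wedge e$), and the induced order on $\mathrm{Reg}(S/{\J})$ coincides with the semilattice order. For $J_e = \{e\}$, the only product to test in the definition of $\equiv_{J_e}$ is $e s e = e \wedge s$: one has $ese \in J_e$ iff $e \leq s$, and the second clause of the definition is automatic since $ese, ete \in J_e$ force $ese = e = ete$. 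Hence $s \equiv_{J_e} t$ iff $e \leq s \Leftrightarrow e \leq t$, and $\equiv_{J_e}$ has exactly two classes: the principal filter $\{x \in S : e \leq x\}$ and its complement.

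Next I would translate irreducibility of $J_e$ into order theory. By definition, $J_e$ is irreducible iff there exist $s, t \in S$ with $s \equiv_{J_{e'}} t$ for every $e' < e$ but $s \not\equiv_{J_e} t$. Taking $s = e$ works (since $e' \leq e$ for every $e' \leq e$), so $J_e$ is irreducible iff there exists $t \in S$ with $e \nleq t$ yet $e' \leq t$ for every $e' < e$; equivalently, iff the set $\{e' \in S : e' < e\}$ admits an upper bound in $S$ that is not above $e$.

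To finish, I will match this condition with join irreducibility. If $e$ is the minimum of $S$, then $\{e' : e' < e\} = \emptyset$, every $t \in S$ satisfies $e \leq t$, no witness exists, and $J_e$ is reducible; this matches the convention that the minimum element is not join irreducible, being the empty join. Otherwise $\{e' : e' < e\}$ is nonempty and bounded above by $e$, so in the finite meet semilattice $S$ the join $u = \bigvee \{e' : e' < e\}$ exists (as the meet of the nonempty finite set of upper bounds). If $u = e$, then every upper bound of $\{e' : e' < e\}$ lies above $e$, no witness exists, and $J_e$ is reducible; if $u < e$, then $u$ itself is a witness and $J_e$ is irreducible. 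Since $e$ is join irreducible precisely when no subset of elements strictly below $e$ has join equal to $e$, equivalently when $u < e$, the two sides of the proposition coincide.

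The argument is essentially bookkeeping. The only subtle points are the degenerate handling of the minimum element and the justification that the relevant join exists in the finite meet semilattice even when $S$ has no global top; both are dispatched by noting that $\{e' : e' < e\}$ is already bounded above by $e$.
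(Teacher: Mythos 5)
Your proof is correct and follows essentially the same route as the paper's: you compute that $s\equiv_{J_e}t$ exactly when $s$ and $t$ lie on the same side of the principal filter $\{x\mid x\geq e\}$, and then translate reducibility of $J_e$ into the statement that every upper bound of $\{e'\mid e'<e\}$ lies above $e$. Packaging the final step through the explicit join $u=\bigvee\{e'\mid e'<e\}$ (and noting one may always take $s=e$ in the witnessing pair) is only a cosmetic reorganization of the paper's two implications.
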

\begin{proof}
If $e\in S$, we claim that $s\equiv_{J_e} t$ if and only if either both $s,t\geq e$, or both $s,t\ngeq e$.  For this, note that $exe\in J_e$, if and only if $exe=e$, which occurs if and only if $x\geq e$.  The claim then follows by definition of $\equiv_{J_e}$  Suppose first that $e$ is not join irreducible.  Then $e$ is the join of the set $X$ of elements that are strictly below it.  In particular, if $s\equiv_{J_f} t$ for all $f\in X$, then either $s,t\ngeq f$ for some $f\in X$, in which case $s,t\ngeq e$, or $s,t\geq f$ for all $f\in X$, in which case $s,t\geq e$.  It follows that $s\equiv_{J_e} t$ and hence $J_e$ is reducible, as $J_f<J_e$ if and only if $f<e$.

Conversely, assume that $J_e$ is reducible and let $X=\{f\mid f<e\}$.  We claim that $e$ is the join of $X$.  It suffices to show that if $e'\geq f$ for all $f\in X$, then $e'\geq e$.  Indeed, if $e'\geq f$ for all $f\in X$, then $e'\equiv_{J_f} e$ for all $f< e$, i.e., for all regular $\J$-classes $J_f<J_e$.  Hence, by reducibility of $J_e$, we have that $e'\equiv_{J_e} e$ and thus, since $e\geq e$, we must have $e'\geq e$.  This completes the proof.
\end{proof}

The definition of an irreducible $\J$-class takes care of the order considerations.  Now we must handle the group theoretic ones.  Let $J$ be an irreducible regular $\J$-class with maximal subgroup $G_J$ and corresponding identity element $e_J\in J$.  Put \[N_J=\{g\in G_J\mid g\equiv_{J'} e_J, \forall J'\in \mathrm{Reg}(S/{\J})\ \text{with}\ J'<J\}\] and observe that it is a normal subgroup of $G_J$ since each $\equiv_{J'}$ is a congruence. Note that $N_J$ does not depend on the choice of $e_J$ in the following sense.  If $e\in J$ is an idempotent, then by standard finite semigroup theory, there exist $a,a'\in J$ with $aa'a=a$, $a'aa'=a'$, $aa'=e$ and $a'a=e_J$.  Moreover, $\psi\colon G_J\to G_e$ given by $\psi(x) = axa'$ is a group isomorphism with inverse $x\mapsto a'xa$.  It then follows that $g\in N_J$ if and only if $\psi(g)\equiv_{J'} e$ for all regular $\J$-classes $J'<J$.

We shall need two lemmas that will allow us to describe which completely reducible representations are faithful in order to establish our main result.

\begin{Lemma}\label{l:cantsep}
Let $J$ be a regular $\J$-class of $S$.  Suppose that $s,t\in S$ with $s,t\in S^1JS^1$ and $s\equiv_{J'} t$ for all regular $\J$-classes $J'<J$.  Then $\rho(s)=\rho(t)$ for any irreducible representation of $S$ with apex not equal to $J$.
\end{Lemma}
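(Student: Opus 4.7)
The plan is a short case analysis on the apex $J''$ of the irreducible representation $\rho$, relying on two ingredients already in hand: the defining property of an apex (namely, $\rho(x)=0$ iff $J''\not\leq J_x$) and Rhodes's containment ${\equiv_{J''}}\subseteq {\ker\rho}$ from Lemma~\ref{l:rhodes}. Since by hypothesis $J''\neq J$, the two cases to consider are $J''<J$ and $J''\not\leq J$ (the latter subsuming the incomparable case).

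In the first case, $J''<J$ is one of the regular $\J$-classes to which the hypothesis applies, so $s\equiv_{J''} t$. Applying Lemma~\ref{l:rhodes} with that apex immediately yields $\rho(s)=\rho(t)$.

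In the second case, I would invoke the assumption $s,t\in S^1JS^1$, which says $J_s\leq J$ and $J_t\leq J$. Transitivity of the $\J$-order then forces $J''\not\leq J_s$ and $J''\not\leq J_t$, so the apex property gives $\rho(s)=0=\rho(t)$.

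The only mild care needed is to handle the incomparable case correctly; I would absorb it into the second case by phrasing the dichotomy as ``$J''<J$ versus $J''\not\leq J$'' rather than ``$J''<J$ versus $J''>J$''. Beyond that, the argument is pure bookkeeping with Green's preorder, and I do not anticipate any genuine obstacle: the substantive content has already been packaged into the apex definition and Rhodes's lemma.
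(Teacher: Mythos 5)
Your proposal is correct and is essentially identical to the paper's proof: the same dichotomy on the apex ($J''<J$ versus $J''\not\leq J$), with Lemma~\ref{l:rhodes} handling the first case and the apex definition handling the second. No issues.
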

\begin{proof}
Let $J'$ be the apex of $\rho$.  If $J'<J$, then $\rho(s)=\rho(t)$ by Lemma~\ref{l:rhodes}.  On the other hand, if $J'\nleq J$, then $\rho(s)=0=\rho(t)$ by definition of an apex.
\end{proof}

Our second lemma establishes the faithfulness criterion.

\begin{Lemma}\label{l:is.faithful}
Let $K$ be a field and $S$ a finite semigroup.  Let $V$ be a semisimple $KS$-module and, for each regular $\J$-class $J$, let $V_J$ be the direct sum of the irreducible constituents of $V$ with apex $J$.  Note that $e_JV_J$ is a semisimple $KG_J$-module.  Then $V$ affords a faithful representation of $S$ if and only if:
\begin{enumerate}
\item $\GGM$ is trivial;
\item $V_J\neq 0$ whenever $J$ is irreducible;
\item $e_JV_J$ is  $N_J$-faithful whenever $J$ is irreducible and $N_J\neq \{e_J\}$.
\end{enumerate}
\end{Lemma}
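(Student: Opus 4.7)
The plan is, in both directions, to analyze which irreducible constituents of $V$ can distinguish a given pair $s, t \in S$, organized by apex $\J$-class.

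\emph{Necessity.} Assume $V$ affords a faithful representation $\rho$. Condition (1) is Corollary~\ref{c:faithful.ggm}. For (2), fix an irreducible regular $\J$-class $J$ and pick $u, v \in S$ with $u \equiv_{J'} v$ for every regular $J' < J$ but $u \not\equiv_J v$; witnessing the last inequality by $x, y \in J$ and setting $a := xuy$, $b := xvy$ yields distinct elements of $S^1 J S^1$ satisfying $a \equiv_{J'} b$ for every regular $J' < J$ (by the congruence property). Lemma~\ref{l:cantsep} then forces $\rho(a) = \rho(b)$ on every constituent with apex $\neq J$, so $V_J \neq 0$. For (3), take $g \in N_J \setminus \{e_J\}$; since $g, e_J \in S^1 J S^1$ and $g \equiv_{J'} e_J$ for every regular $J' < J$, Lemma~\ref{l:cantsep} gives $\rho(g) = \rho(e_J)$ on every constituent with apex $\neq J$. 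On each apex-$J$ constituent $\til W$, the identities $g = e_J g = g e_J$ force $\rho(g)$ to vanish on $(1-e_J)\til W$ and to act on $e_J\til W$ by the prescribed $KG_J$-module structure, so faithfulness is equivalent to $e_J V_J$ being $N_J$-faithful.

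\emph{Sufficiency.} Assume (1)--(3) and suppose $\rho(s) = \rho(t)$. By (1) it suffices to show $s \equiv_J t$ for every regular $\J$-class $J$; assume otherwise and pick $J$ minimal among regular $\J$-classes on which this fails. Then $s \equiv_{J'} t$ for every regular $J' < J$, so $J$ must be irreducible (else reducibility gives $s \equiv_J t$), and (2), (3) apply. Choose $x, y \in J$ witnessing $s \not\equiv_J t$ and set $a := xsy$, $b := xty$; these are distinct elements of $S^1 J S^1$ with $a \equiv_{J'} b$ for every $J' < J$. The goal is to produce a constituent of $V_J$ on which $\rho(a) \neq \rho(b)$.

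If exactly one of $a, b$ lies in $J$, say $a \in J$ and $b \in S^1 J S^1 \setminus J$, then $\rho(a) \neq 0 = \rho(b)$ on any apex-$J$ constituent, and (2) suffices. Otherwise $a, b \in J$ are distinct, and from (1) a short argument gives $a \not\equiv_J b$: for any regular $J^*$ not strictly below $J$, sandwiching $a, b$ by elements of $J^*$ yields products in $\J$-classes $\leq J$, hence not in $J^*$, so $a \equiv_{J^*} b$ trivially, while for $J^* < J$ the congruence property gives $a \equiv_{J^*} b$; (1) then forces $J^* = J$ to be the class distinguishing them. Applying Lemma~\ref{l:Green} to $a \J e_J$, pick $r \in J$ with $a \eL r \R e_J$ and $u, w \in S^1$ with $r = ua$, $e_J = rw$, giving a bijection $\psi \colon H_a \to G_J$, $\psi(z) = uzw$, with $\psi(a) = e_J$. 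When $a \HH b$, the element $ubw = \psi(b) \in G_J$ differs from $e_J$, and the congruence property applied to $a \equiv_{J'} b$ gives $ubw \equiv_{J'} e_J$ for every $J' < J$, so $ubw \in N_J \setminus \{e_J\}$; (3) then yields $\rho(ubw) \neq \rho(uaw) = \rho(e_J)$, forcing $\rho(a) \neq \rho(b)$. The case $a \not\HH b$ reduces to this: when $a \not\R b$, the Sch\"utzenberger decomposition of any apex-$J$ simple module into summands indexed by $\R$-classes places $\rho(a), \rho(b)$ in distinct components, giving immediate separation; when $a \R b$ but $a \not\eL b$, right-multiplying by an idempotent of $L_a$ either lands outside $J$ (immediate separation) or inside $H_a$ (by stability), reducing to the $\HH$-equivalent subcase. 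The main obstacle is this last reduction, because it is what ensures that the $N_J$-faithfulness hypothesis (3) suffices to distinguish semigroup elements in every $\HH$-class configuration.
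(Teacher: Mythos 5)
Your necessity argument and the core of your sufficiency argument follow the paper's proof essentially verbatim (Corollary~\ref{c:faithful.ggm}, Lemma~\ref{l:cantsep}, minimal violated $\J$-class, Green's lemma producing an element of $N_J\setminus\{e_J\}$). The one genuine problem is the step you yourself flag as ``the main obstacle'': the reduction from $a\not\HH b$ to $a\HH b$. This obstacle does not exist, and the point you are missing is stability. Since $x,y\in J$ and $a=xsy\in J$, we have $a\in xS^1\cap J_x=R_x$ and $a\in S^1y\cap J_y=L_y$; the same holds for $b=xty$. Hence $a,b\in R_x\cap L_y$ are automatically $\HH$-equivalent whenever both lie in $J$, and your two extra sub-cases are vacuous. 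This is exactly how the paper disposes of the issue, in one line.

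The reason this matters, rather than being a harmless detour, is that the arguments you supply for the vacuous sub-cases would not survive scrutiny if those cases could occur. A simple module with apex $J$ admits no decomposition into $S$-invariant summands indexed by $\R$-classes (it is simple), and placing $\rho(a)$ and $\rho(b)$ in ``distinct components'' of a mere vector-space decomposition does not by itself force $\rho(a)\neq\rho(b)$; so the $a\not\R b$ case is not actually handled. In the $a\R b$, $a\not\eL b$ case, taking an idempotent $f\in L_a$ gives $af=a$ and possibly $bf\in H_a$, but nothing rules out $bf=a$, in which case the reduction produces the pair $(a,a)$ and yields no contradiction. So as written the sufficiency direction has a gap; it closes immediately once you invoke stability to see that $xsy\HH xty$ always holds in the second case. (A small additional remark: your digression showing $a\not\equiv_J b$ in the second case is unnecessary --- only $a\neq b$ together with $a\equiv_{J'}b$ for all regular $J'<J$ is used.)
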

\begin{proof}
The fact that $e_JV_J$ is a semisimple $KG_J$-module follows from Theorem~\ref{t:CMP}.
Suppose first that $V$ affords a faithful representation of $S$.  Then $\GGM$ is trivial by Corollary~\ref{c:faithful.ggm}.  Assume that $V=V_1\oplus \cdots \oplus V_k$  with $V_1,\ldots, V_k$ simple.  Let $J_i$ be the apex of $V_i$ and let $\rho_i$ be an irreducible representation afforded by $V_i$.  Let $J$ be an irreducible regular $\J$-class. Then we can find $s,t\in S$ with $s\not\equiv_J t$ but $s\equiv_{J'} t$ for all regular $\J$-classes $J'<J$.  By definition of $\equiv_J$, this means (possibly interchanging $s,t$), that we can find $x,y\in J$ with $xsy\in J$ and $xty\notin J$, or $xsy,sty\in J$ but $xsy\neq xty$.  In either case, $xsy\neq xty$. Note that $xsy\equiv_{J'} xty$ for all regular $\J$-classes $J'<J$.  It follows from Lemma~\ref{l:cantsep}  that $\rho_i(xsy)=\rho_i(xty)$ if $J_i\neq J$. We conclude, since $V$ affords a faithful representation and $xsy\neq xty$,  that some $J_i=J$.  This establishes (2). For (3), suppose that $N_J\neq \{e_J\}$.  Note that $e_JV$ is a $G_J$-faithful $KG_J$-module since $V$ affords a faithful representation of $S$, and if $g\in G_J$ acts trivially on $e_JV$, then $gv=ge_Jv=e_Jv$ for all $v\in V$, whence $g=e_J$ by faithfulness.     Note that $e_JV_J =\bigoplus_{J_i=J} e_JV_i$.  We claim that $e_JV_J$ is $N_J$-faithful. Indeed, if $g\in N_J$, then $g\equiv_{J'} e_J$ for all regular $\J$-classes $J'<J$, and so $\rho_i(g)=\rho_i(e_J)$ whenever $J_i\neq J$ by Lemma~\ref{l:cantsep}, whence $g$ acts trivially on $e_JV_i$ if $J_i\neq J$.  Thus $e_JV_J$ must be $N_J$-faithful as $e_JV=e_JV_1\oplus\cdots\oplus e_JV_k=e_JV_J\oplus \bigoplus_{J_i\neq J} e_JV_i$ affords a faithful representation of $G_J$, and hence $N_J$, but $N_J$ acts trivially on $\bigoplus_{J_i\neq J} e_JV_i$.

Conversely, suppose that (1)--(3) hold, and let $\rho$ be a representation afforded by $V$.   Let $s\neq t$ be elements of $S$.  Then since $\GGM$ is the trivial congruence, we can find a regular $\J$-class $J$ with $s\not\equiv_J t$.  We may assume without loss of generality that $J$ is minimal with respect to this property, that is, $s\equiv_{J'} t$ for all regular $\J$-classes $J'<J$.  In particular, it follows that $J$ is irreducible.  By definition of $\equiv_J$, we can find (possibly interchanging $s,t$) $x,y\in J$ with $xsy\in J$ and $xty\notin J$ or with $xsy,xty\in J$ and $xsy\neq xty$.  In the first case, $xsyV_J\neq 0$ and $xtyV_J=0$ since $0\neq V_J$ is a direct sum of simple modules with apex $J$ and $J_{xty}<J$.  Thus $\rho(xsy)\neq \rho(xty)$ and so $\rho(s)\neq \rho(t)$.  Next suppose that $xsy,xty\in J$ and $xsy\neq xty$.  Then $xsy,xty\in R_x\cap L_y$ by stability of finite semigroups and hence are $\HH$-equivalent.  By Green's lemma, we can find $u,v\in S^1$ with $r\mapsto urv$ a bijection from the $\HH$-class of $xty$ to $G_J$ with $uxtyv=e_J$.  Note that $g=uxsyv\in N_J$ as $g\equiv_{J'} e_J$ for all regular $\J$-classes $J'<J$, since $\equiv_{J'}$ is a congruence, and $g\neq e_J$.  Since $e_JV_J$ is $N_J$-faithful, we deduce that $g$ acts nontrivially on $e_JV_J$ and hence $\rho(g)\neq \rho(e_J)$.  It follows that $\rho(s)\neq \rho(t)$.  This concludes the proof that $\rho$ is faithful.
\end{proof}

We are now prepared to prove the main result of this paper.

\begin{Thm}\label{t:main}
Let $K$ be a field of characteristic $p\geq 0$ and let $S$ be a finite semigroup.
Then the following are equivalent for $k\geq 0$.
\begin{enumerate}
\item $S$ has a faithful completely reducible representation over $K$ with $k$ irreducible constituents.
\item $S$ has a faithful completely reducible representation over $K$ and there is a $KS$-module with $k$ simple composition factors affording a faithful representation of $S$.
\item $\GGM$ is trivial, $p\nmid |A(G_J)\cap N_J|$ for each irreducible regular $\J$-class $J$ and $k\geq \sum_{J\in \mathrm{Reg}(S/{\J})} k_J$ where: $k_J=0$ if $J$ is reducible; $k_J=1$ if $J$ is irreducible and $N_J=\{e_J\}$; and $k_J$ is the minimal number of generators of $S(G_J)\cap N_J$ as a normal subgroup of $G_J$, otherwise.
\end{enumerate}
\end{Thm}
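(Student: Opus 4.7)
The plan is to prove the circular implications (1)$\Rightarrow$(2)$\Rightarrow$(1) and (1)$\Leftrightarrow$(3), leaning on Lemma~\ref{l:is.faithful} to reduce semigroup faithfulness to a condition on each irreducible regular $\J$-class, and then applying the relativized \v{Z}mud$'$ theorem (Theorem~\ref{t:gen.case}) maximal-subgroup-by-maximal-subgroup. The implication (1)$\Rightarrow$(2) is immediate: a faithful completely reducible representation with $k$ irreducible constituents is in particular a module of length $k$ (with no null factors) affording a faithful representation. For (2)$\Rightarrow$(1), take a module of length $k$ whose associated matrix representation is faithful, extract its non-null composition factors, and apply Proposition~\ref{p:proj.to.ss}; if this yields fewer than $k$ constituents, pad by repeating any one of them (faithfulness is preserved under direct sum).

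For (1)$\Rightarrow$(3), assume $V=V_1\oplus\cdots\oplus V_k$ is a faithful semisimple $KS$-module with the $V_i$ simple. For each regular $\J$-class $J$, let $V_J$ be the direct sum of those $V_i$ with apex $J$, and let $m_J$ be the number of such summands, so $k=\sum_J m_J$. Lemma~\ref{l:is.faithful} forces $\GGM$ to be trivial and, for each irreducible $J$, $V_J\neq 0$ and $e_JV_J$ (a semisimple $KG_J$-module by Theorem~\ref{t:CMP}) restricts to a faithful representation of $N_J$. The number of simple $KG_J$-constituents of $e_JV_J$ equals $m_J$ by Theorem~\ref{t:CMP}. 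For irreducible $J$ with $N_J\neq\{e_J\}$, Proposition~\ref{p:faithful} yields $p\nmid|A(G_J)\cap N_J|$, and Theorem~\ref{t:gen.case} applied to $G_J\rhd N_J$ gives $m_J\geq k_J$. For irreducible $J$ with $N_J=\{e_J\}$, we have $m_J\geq 1=k_J$. Reducible $J$ contribute $m_J\geq 0=k_J$, so $k\geq\sum_J k_J$.

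For (3)$\Rightarrow$(1), build $V$ in reverse. For each irreducible $J$: when $N_J\neq\{e_J\}$, Theorem~\ref{t:gen.case} (using $p\nmid|A(G_J)\cap N_J|$) produces a completely reducible $KG_J$-module $U_J$ with exactly $k_J$ irreducible constituents whose restriction to $N_J$ is faithful; when $N_J=\{e_J\}$, take $U_J$ to be any simple $KG_J$-module (so $k_J=1$). By Theorem~\ref{t:CMP}(3), each simple summand of $U_J$ lifts to a simple $KS$-module with apex $J$; let $V_J$ be the direct sum of these lifts, so $V_J$ has $k_J$ simple constituents, all with apex $J$, and $e_JV_J\cong U_J$ as a $KG_J$-module. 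Set $V=\bigoplus_{J\text{ irreducible}} V_J$, a semisimple $KS$-module with $\sum_J k_J$ simple constituents. The hypotheses in (3), together with the construction, are exactly the three conditions of Lemma~\ref{l:is.faithful}, so $V$ affords a faithful representation of $S$. If $k>\sum_Jk_J$, pad by duplicating an existing constituent.

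The hard part will be the bookkeeping in the two direction-changing steps: verifying that the count of constituents of $V_J$ as a $KS$-module matches the count of constituents of $e_JV_J$ as a $KG_J$-module (which is Theorem~\ref{t:CMP} applied carefully), and confirming that $N_J$-faithfulness of $e_JV_J$ is exactly the local condition needed at $J$ (which is the content of Lemma~\ref{l:is.faithful}). Everything else is a direct invocation of the relativized \v{Z}mud$'$ theorem from Section~\ref{s:group}.
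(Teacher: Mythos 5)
Your proposal is correct and follows essentially the same route as the paper: reduce (1)$\Leftrightarrow$(2) to Proposition~\ref{p:proj.to.ss}, and use Lemma~\ref{l:is.faithful} together with Theorem~\ref{t:CMP} to localize faithfulness at the irreducible regular $\J$-classes, where Theorem~\ref{t:gen.case} applied to $N_J\lhd G_J$ gives the count $k_J$ in both directions. The only cosmetic difference is that you make explicit the padding step when $k$ exceeds $\sum_J k_J$, which the paper leaves implicit.
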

\begin{proof}
The equivalence of the first two items follows from Proposition~\ref{p:proj.to.ss}.  Assume that (1) holds.  Then it is immediate from Lemma~\ref{l:is.faithful}, Proposition~\ref{p:faithful} and Theorem~\ref{t:gen.case} that $\GGM$ is trivial,  any faithful completely irreducible representation of $S$ has at least $k_J$ irreducible constituents with apex $J$ and  $p\nmid |A(G_J)\cap N_J|$ for any irreducible regular $\J$-class $J$.

Next we prove that (3) implies (1).  It clearly suffices to show that there is a faithful completely reducible representation with  $\sum_{J\in \mathrm{Reg}(S/{\J})} k_J$ irreducible constituents.  We proceed as follows.  If $J$ is an irreducible $\J$-class with $N_J=\{e_J\}$, then let $V_J$ be any simple module with apex $J$, for example, it could correspond as per Theorem~\ref{t:CMP} to the trivial representation of $G_J$.  If $J$ is irreducible and $N_J\neq \{e_J\}$, then choose $k_J$ simple $KG_J$-modules $V_1,\ldots, V_{k_J}$ such that $V_1\oplus\cdots \oplus V_{k_J}$ is $N_J$-faithful, using Theorem~\ref{t:gen.case}.  Put $V_J = \til V_1\oplus\cdots \oplus \til V_{k_J}$.  Then $V_J$ is a semisimple $KS$-module with $k_J$ irreducible constituents with apex $J$, and $e_JV_J\cong V_1\oplus\cdots \oplus V_{k_J}$ is an $N_J$-faithful $KG_J$-module.  It follows from Lemma~\ref{l:is.faithful} that $V=\bigoplus_J V_J$, where $J$ runs over the irreducible regular $\J$-classes, affords a faithful completely reducible representation $\rho$ of $S$
with the desired number of irreducible constituents.
\end{proof}

Notice that Theorem~\ref{t:main} recovers Corollary~\ref{c:zhmud} when $S$ is a group, and also recovers the result of~\cite{effective} when $S$ is a semilattice, in light of Proposition~\ref{p:join.irr}.

Theorem~\ref{t:main} implies that $S$ has a faithful completely reducible representation over $K$ if and only if $\GGM$ is trivial and, for each irreducible regular $\J$-class $J$, the characteristic of $K$ does not divide $|A(G_J)\cap N_J|$.  This may be easier to check in positive characteristic than directly computing the Rhodes radical as described in~\cite{AMSV}.

Let us give an example to illustrate Theorem~\ref{t:main}.  Let $G$ be a finite group and let $Q(G)=\bigcup_{N\lhd G}G/N$, which is submonoid of $P(G)$.  The normal subgroups of $G$ form a lattice.  A normal subgroup $N\lhd G$ is \emph{meet irreducible} if it cannot be expressed as an intersection of a (possibly empty) set $X$ of normal subgroups of $G$ that strictly contain $N$.  Note that $N$ is meet irreducible if and only if there is a unique smallest normal subgroup $\ov N$ with $N< \ov N$.  Indeed, let $N$ be a normal subgroup and $\ov N$ denote the intersection of all normal subgroups properly containing $N$.  Then $\ov N=N$ if $N$ is not meet irreducible and otherwise $N<\ov N$ and $\ov N$ has the desired property.  In the latter case, $\ov N/N$ is the unique minimal normal subgroup of $G/N$.

\begin{Prop}
Let $G$ be a finite group and $K$ a field of characteristic $p\geq 0$.  Then $Q(G)=\bigcup_{N\lhd G}G/N$ has a faithful completely reducible representation over $K$ if and only if $\ov M/M$ is not a $p$-group for any meet irreducible normal subgroup $M$.  Moreover, in this case the minimum number of irreducible constituents in a faithful completely reducible representation is the number of meet irreducible normal subgroups of $G$.
\end{Prop}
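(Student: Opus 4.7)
The plan is to apply Theorem~\ref{t:main} to the finite semigroup $S=Q(G)$. First I would unpack the structure: since $M, M'\lhd G$ the product rule reads $(hM)(h'M')=hh'\cdot MM'$, so for each $N\lhd G$ the map $\phi_N\colon Q(G)\to (G/N)^0$ sending $hM\mapsto hN$ when $M\subseteq N$ and $hM\mapsto 0$ otherwise is a surjective semigroup homomorphism onto a generalized group mapping image. From this one reads off that the $\J$-classes of $Q(G)$ are the sets $G/N$, one for each $N\lhd G$, ordered by $G/N\leq G/N'$ iff $N\supseteq N'$; each $\J$-class is a group, so the maximal subgroup $G_{G/N}$ equals $G/N$ and the distinguished idempotent $e_{G/N}$ is $N$. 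A direct check identifies $\equiv_{G/N}$ with $\ker\phi_N$. In particular, $\GGM=\bigcap_N\ker\phi_N$ is trivial: given $hM\neq h'M'$, either $M\neq M'$ (take $N$ to be whichever of $M,M'$ fails to contain the other; then $\phi_N$ separates the two elements) or $M=M'$ and $hM\neq h'M$, in which case $\phi_M$ separates them.

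Next I would identify the irreducible $\J$-classes and compute $N_J$. Restricted to $G/N$, the congruence $\equiv_{G/N'}$ for $N'\supsetneq N$ is the kernel of the canonical quotient $G/N\twoheadrightarrow G/N'$; consequently $\bigcap_{J'<J}\equiv_{J'}$ restricted to $G/N$ is the equivalence modulo $\ov N/N$, while $\equiv_J$ itself is equality. Hence $J=G/N$ is irreducible iff $\ov N\supsetneq N$, iff $N$ is meet irreducible, and in that case $N_J=\ov N/N$, which by hypothesis is the unique minimal normal subgroup of $G_J=G/N$.

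Finally I would apply Theorem~\ref{t:main}. For each irreducible $J=G/N$, since $N_J=\ov N/N$ is a minimal normal subgroup of $G/N$ it is contained in $S(G/N)$, so $S(G_J)\cap N_J=\ov N/N$; being minimal normal it is generated as a normal subgroup by any single nontrivial element, giving $k_J=1$. Moreover, $A(G/N)\cap(\ov N/N)$ equals $\ov N/N$ when $\ov N/N$ is abelian (necessarily an elementary abelian $q$-group by Corollary~\ref{c:abel.min.norm}) and is trivial when $\ov N/N$ is nonabelian, so the hypothesis $p\nmid|A(G_J)\cap N_J|$ becomes exactly the condition that $\ov N/N$ not be a $p$-group. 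Theorem~\ref{t:main} then yields both claims of the proposition, with the minimum number of irreducible constituents $k=\sum_J k_J$ equal to the number of meet irreducible normal subgroups of $G$. The main bookkeeping hurdle is the explicit identification of $\equiv_J$ and $N_J$ in terms of the normal subgroup lattice of $G$; once those are in hand, the result drops out of Theorem~\ref{t:main}.
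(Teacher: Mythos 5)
Your plan follows the paper's proof almost exactly: identify the $\J$-classes of $Q(G)$ with the quotients $G/N$, describe $\equiv_{J_N}$ (your $\ker\phi_N$ is precisely the description the paper verifies directly from the definition of $\equiv_J$), show $\GGM$ is trivial, match irreducible $\J$-classes with meet irreducible normal subgroups, identify $N_{J_N}=\ov N/N$, and feed everything into Theorem~\ref{t:main}. The final bookkeeping ($k_J=1$ because $\ov N/N$ is a minimal normal subgroup of $G/N$, and $p\nmid|A(G/N)\cap \ov N/N|$ exactly when $\ov N/N$ is not a $p$-group) also matches the paper.

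The one step you state too quickly is the characterization of reducibility. Whether $J=G/N$ is reducible is a containment of congruences on all of $Q(G)$, so restricting $\bigcap_{J'<J}\equiv_{J'}$ to the subset $G/N$ only gives one direction: if $\ov N\supsetneq N$, the pair $N$, $gN$ with $g\in\ov N\setminus N$ witnesses irreducibility. For the converse --- which you need in order to conclude $k_J=0$ when $N$ is not meet irreducible --- you must also handle pairs $h_1M_1$, $h_2M_2$ with, say, $M_1\nleq N$: you have to show that $h_1M_1\equiv_{J_{N'}}h_2M_2$ for all $N'\supsetneq N$ forces $M_2\nleq N$ as well, which uses $N=\ov N=\bigcap_{N'>N}N'$ to produce some $N'>N$ with $M_1\nleq N'$, whence $M_2\nleq N'$ and so $M_2\nleq N$. (A similar remark applies to pairs with $M_1,M_2\leq N$ but $M_i\neq N$.) This is exactly the extra case the paper's proof works through; it is routine, but it does not follow from the restriction-to-$G/N$ computation alone, so fill it in before calling the identification of the irreducible $\J$-classes complete.
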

\begin{proof}
It is straightforward to verify (and well known) that the idempotents of $Q(G)$ are the normal subgroups and that if $M\lhd G$, then $J_M = G/M$. We claim that $g_1M_1\equiv_{J_M} g_2M_2$ if and only if either both $M_1,M_2\nleq M$, or both $M_1,M_2\leq M$ and $g_1M=g_2M$.  Indeed, if both $M_1,M_2\nleq M$, then $gMg_1M_1hM\notin J_M$ and $gMg_2M_2hM\notin J_M$ for any $gM,hM\in J_M$.  Thus   $g_1M_1\equiv_{J_M} g_2M_2$.  If $M_1,M_2\leq M$, then $gMg_1M_1hM=gg_1hM$ and $gMg_2M_2hM = gg_2hM$, and so $g_1M_1\equiv_{J_M} g_2M_2$ if and only if $g_1M=g_2M$. Finally, if say $M_1\leq M$ and $M_2\nleq M$, then $Mg_1M_1M\in J_M$ and $Mg_2M_2M\notin J_M$ and so $g_1M_1\not\equiv_{J_M}g_2M_2$.

It now follows easily that $\GGM$ is trivial. Indeed, if  $g_1M_1,g_2M_2\in Q(G)$  with $M_1\nleq M_2$, then $g_1M_1\not\equiv_{J_{M_2}} g_2M_2$ by the claim.  On the other hand, if $gM\neq hM$, then  $gM\not\equiv_{J_M}hM$ by the claim. Thus $\GGM$ is the trivial congruence.

Next we claim that $J_M$ is irreducible if and only if $M$ is meet irreducible.  First note that $J_M\leq J_{M'}$ if and only if $M'\leq M$.  Suppose that $\ov M=M$ and  $g_1M_1\equiv_{J_{M'}} g_2M_2$ for all $M'> M$. Suppose first that, say, $M_1\nleq  M$. Then since $M=\ov M$, we must have $M_1\nleq M'$ for some $M'>M$.  But then since $g_1M_1\equiv_{J_{M'}} g_2M_2$, we must have by the claim that $M_2\nleq M'$. Since $M<M'$, we conclude that $M_2\nleq M$. Thus $g_1M_1\equiv_{J_M} g_2M_2$ by the claim.  On the other hand, if $M_1,M_2\leq M$, then for each $M'>M$, we have $g_1M'=g_2M'$ by the claim.  Therefore, $g_1\inv g_2\in \ov M=M$ and so $g_1M=g_2M$.  We conclude that $g_1M_1\equiv_{J_M} g_2M_2$ by the claim, and so $J_M$ is reducible.

Conversely, suppose that $J_M$ is reducible.  We claim that $\ov M=M$.  Let $g\in \ov M$.  Then for every $M'>M$, we have $g\in M'$ and so $gM'=M'$.  Thus $gM\equiv_{J_{M'}} M$ for all $M'>M$ by the claim.  Therefore, by reducibility, $gM\equiv_{J_M} M$, and so $gM=M$ by another application of the claim. We conclude that $\ov M=M$, and hence $M$ is not meet irreducible.

Note that $J_M$ is the group $G/M$ and so $G_{J_M} = G/M$.    We claim that if $M$ is meet irreducible, then $N_{J_M}=\ov M/M$.   Indeed, if $gM\equiv_{J_{M'}} M$ for all $M'>M$, then $gM'=M'$ for all $M'>M$ by the claim, and so $g\in \ov M$.  Conversely, if $g\in \ov M$, then $gM'=M'$ for all $M'>M$, and so $gM\equiv_{J_{M'}} M$ for all $M'>M$ by the claim.  Note that since $\ov M/M$ is the unique minimal normal subgroup of $G/M$, we have $S(G_{J_M})= S(G/M) = \ov M/M= N_{J_M}$ is clearly generated by a single element as a normal subgroup.  Also, we have  $p\mid |A(G/M)\cap \ov M/M|$ if and only if $\ov M/M$ is an elementary abelian $p$-group by Corollary~\ref{c:abel.min.norm}.    The result now follows from Theorem~\ref{t:main}.
\end{proof}

We remark that the normal subgroup lattice of a group is modular and a famous theorem of Dilworth asserts that in a modular lattice the number of meet irreducible elements equals the number of join irreducible elements.

We next observe that Theorem~\ref{t:main} recovers Rhodes's characterization~\cite{Rhodeschar} of semigroups with a faithful irreducible representation.  A semigroup $S$ is \emph{generalized group mapping} if there is a regular $\J$-class $J$ (called the \emph{distinguished $\J$-class}) such that $\equiv_J$ is the trivial congruence.  In this case, either $J$ must be the minimal ideal of $S$, or $S$ must have a zero and $J$ is the unique minimal non-zero $\J$-class.  Moreover, in the former case $S$ acts faithfully on the left and right of $J$ and in the latter case it acts faithfully on the left and right of $J\cup \{0\}$ (and the converse is true, as well); see~\cite[Chapter~4.6]{qtheor} for details. The following lemma connects generalized group mapping semigroups with the language of the current paper.

\begin{Lemma}\label{l:group.mapping.struct}
Let $S$ be a nontrivial finite semigroup and $J$ a regular $\J$-class of $S$.  Then the following are equivalent.
\begin{enumerate}
\item $\GGM$ is the trivial congruence and $J$ is the unique irreducible regular $\J$-class of $S$.
\item $S$ is generalized group mapping with distinguished $\J$-class $J$.
\end{enumerate}
Moreover, if these equivalent conditions hold, then $N_J=G_J$.
\end{Lemma}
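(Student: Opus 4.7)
My plan is to prove $(1)\Leftrightarrow(2)$ by a short order-theoretic argument on $\mathrm{Reg}(S/{\J})$, and then read off $N_J=G_J$ from the same structural picture. Throughout I will use the structural fact recalled just before the lemma: if $\equiv_J$ is the trivial congruence, then either $J$ is the minimum ideal of $S$, or $S$ has a zero and $J$ is the unique minimal nonzero $\J$-class.

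For $(2)\Rightarrow(1)$: since $\GGM\subseteq{\equiv_J}$, triviality of $\equiv_J$ immediately forces $\GGM$ to be trivial. To identify $J$ as the unique irreducible regular $\J$-class, I first observe that whenever $0\in S$ one has $0\cdot s\cdot 0=0$ for all $s$, so $\equiv_{\{0\}}$ is the universal relation on $S$. Combined with the empty-intersection convention, this makes $\bigcap_{J'<J,\,J'\in \mathrm{Reg}(S/{\J})}{\equiv_{J'}}$ the universal relation on $S$ in both structural cases for $J$, and because $S$ is nontrivial this is not contained in the trivial $\equiv_J$; hence $J$ is irreducible. Any other regular $\J$-class $J''$ is reducible: either $J<J''$, so $\equiv_J$ lies in the intersection below $J''$, making that intersection trivial and thus contained in $\equiv_{J''}$; or $J''=\{0\}$, in which case $\equiv_{J''}$ is itself universal and so contains whatever the intersection happens to be.

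For $(1)\Rightarrow(2)$: fix $s\neq t$ in $S$. By triviality of $\GGM$, the set $\{J'\in\mathrm{Reg}(S/{\J}) : s\not\equiv_{J'}t\}$ is nonempty, and by finiteness it has a minimal element $J_0$. Minimality places $(s,t)$ in $\bigcap_{J'<J_0,\,J'\in\mathrm{Reg}(S/{\J})}{\equiv_{J'}}$ while $(s,t)\notin{\equiv_{J_0}}$, so $J_0$ is irreducible; uniqueness forces $J_0=J$ and thus $s\not\equiv_J t$, proving $\equiv_J$ trivial. The moreover clause is then immediate from the definition
\[N_J=\{g\in G_J\mid g\equiv_{J'}e_J\ \text{for every regular}\ J'<J\}:\]
under either equivalent condition the only possible regular $\J$-class strictly below $J$ is $\{0\}$, on which $\equiv_{\{0\}}$ is universal, so the constraint on $g$ is either vacuous (empty-intersection convention, when $J$ is the minimum ideal) or automatically satisfied (the $\{0\}$ case), giving $N_J=G_J$. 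No step looks like a real obstacle; the only care required is handling the with-zero and without-zero cases uniformly via the universal-relation and empty-intersection computations.
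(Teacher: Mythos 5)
Your proof is correct and follows essentially the same route as the paper's: minimality of a separating regular $\J$-class for $(1)\Rightarrow(2)$, and for $(2)\Rightarrow(1)$ the observation that $\equiv_{\{0\}}$ is universal, that every other regular $\J$-class lies strictly above $J$, and that the empty-intersection convention handles the minimal-ideal case. The only cosmetic difference is that you invoke the structural dichotomy (minimal ideal versus unique $0$-minimal $\J$-class) as a quoted fact, whereas the paper rederives it in one line from triviality of $\equiv_J$.
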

\begin{proof}
Suppose that (1) holds.  We show that $\equiv_J$ is trivial. Indeed, if $s\neq t$ in $S$, then since $\GGM$ is trivial,  $s\not\equiv_{J'} t$ for some regular $\J$-class $J'$, which we may assume is minimal with respect to this property.  But then $J'$ is irreducible and so $J'=J$.  Thus $\equiv_J$ is the trivial congruence, establishing (2).

Suppose that (2) holds.   Then $\GGM$ is trivial by definition.  We claim that $J$ is the unique irreducible regular $\J$-class and $N_J=G_J$. First note that if $J_s,J_t\ngeq J$, then $s\equiv_J t$ and so $s=t$.  Therefore, $J$ is either the minimal ideal of $S$, or $S$ has a zero element and $J$ is the unique minimal nonzero $\J$-class.  If $J$ is the minimal ideal, then since $S$ is nontrivial and $\equiv_J$ is trivial, we have that $J$ is irreducible. On the other hand, if $S$ has a zero and $J$ is the minimum nonzero $\J$-class, then $\equiv_{J_0}$ (where $J_0$ is the $\J$-class of zero) is the universal relation and $\equiv_J$ is trivial, and so $J$ is irreducible and $J_0$ is not irreducible.  Moreover, for all other regular $\J$-classes $J'$, we have that $J<J'$ and ${\equiv_J}\subseteq {\equiv_{J'}}$ since, $\equiv_J$ is trivial.  Thus $J$ is the only irreducible regular $\J$-class. Moreover, $N_J=G_J$ since either there are no regular $\J$-classes below $J$, or there is just $J_0$ and $\equiv_{J_0}$ is universal.  This completes the proof.
\end{proof}

\begin{Cor}[Rhodes]
Let $S$ be a nonempty finite semigroup.  Then $S$ has a faithful irreducible representation over a field $K$ if and only if it is generalized group mapping with distinguished $\J$-class $J$ such that the maximal subgroup $G_J$ has a faithful irreducible representation over $K$, that is, $G_J$ has no nontrivial normal $p$-subgroup, where $p$ is the characteristic of $K$, and $S(G_J)$ is generated by a single element as a normal subgroup.
\end{Cor}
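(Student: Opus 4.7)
The plan is to deduce this as the case $k=1$ of Theorem~\ref{t:main}, combined with Lemma~\ref{l:group.mapping.struct} and Corollary~\ref{c:gaschutz}. Specializing Theorem~\ref{t:main} to $k=1$ characterizes $S$ having a faithful irreducible representation over $K$ by three conditions: $\GGM$ is trivial; $p\nmid |A(G_J)\cap N_J|$ for each irreducible regular $\J$-class $J$; and $\sum_J k_J\leq 1$, where the sum runs over all irreducible regular $\J$-classes.

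The key reduction is to show that $\sum k_J\leq 1$ together with $\GGM$ trivial forces $S$ (in the nontrivial case) to have exactly one irreducible regular $\J$-class. First I would check that $k_J\geq 1$ for every irreducible regular $\J$-class $J$: if $N_J=\{e_J\}$ this is immediate from the definition, and otherwise $N_J$ is a nontrivial normal subgroup of $G_J$ and hence contains a minimal normal subgroup of $G_J$, which lies in $S(G_J)$; thus $S(G_J)\cap N_J$ is nontrivial and its minimum number of normal generators is at least $1$. Next I would argue that a nontrivial semigroup with $\GGM$ trivial admits at least one irreducible regular $\J$-class: given distinct $s,t\in S$, triviality of $\GGM$ furnishes a regular $\J$-class $J$ with $s\not\equiv_J t$, and choosing such $J$ minimal among regular $\J$-classes makes $J$ irreducible by the very definition. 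These two observations together force $\sum k_J = 1$, with exactly one irreducible regular $\J$-class $J$, for which $k_J=1$. The trivial-semigroup case is handled separately: it clearly admits a faithful irreducible representation and is vacuously generalized group mapping with the trivial group as its unique maximal subgroup.

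Finally, by Lemma~\ref{l:group.mapping.struct} the conditions that $\GGM$ be trivial and that $J$ be the unique irreducible regular $\J$-class of $S$ are equivalent to $S$ being generalized group mapping with distinguished $\J$-class $J$, and in that situation $N_J=G_J$. The remaining hypotheses of Theorem~\ref{t:main}---namely $p\nmid |A(G_J)|$ and that $S(G_J)$ be generated as a normal subgroup of $G_J$ by a single element---are precisely those of Corollary~\ref{c:gaschutz} (Gasch\"utz) characterizing when $G_J$ admits a faithful irreducible representation over $K$. The converse direction is obtained by running the same equivalences backward: starting from a generalized group mapping $S$ with distinguished $\J$-class $J$ and $G_J$ satisfying Gasch\"utz's conditions, Lemma~\ref{l:group.mapping.struct} yields $N_J=G_J$ and identifies $J$ as the unique irreducible regular $\J$-class, whereupon Theorem~\ref{t:main} with $k=1$ produces a faithful irreducible representation. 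The main (mild) obstacle is the bookkeeping establishing $k_J\geq 1$ and the existence of at least one irreducible regular $\J$-class in the nontrivial case; once these are in hand, the corollary is essentially a direct translation.
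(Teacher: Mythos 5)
Your proposal is correct and follows essentially the same route as the paper: handle the trivial semigroup separately, then combine Theorem~\ref{t:main} with $k=1$, Lemma~\ref{l:group.mapping.struct}, and Corollary~\ref{c:gaschutz}. The only difference is that you make explicit the bookkeeping (every irreducible regular $\J$-class has $k_J\geq 1$, and a nontrivial semigroup with $\GGM$ trivial has at least one irreducible regular $\J$-class) that the paper leaves implicit when it asserts uniqueness of the irreducible regular $\J$-class.
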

\begin{proof}
The trivial semigroup has a faithful irreducible representation and is generalized group mapping with the maximal subgroup of the distinguished $\J$-class the trivial group and so the result is true when $S$ is trivial.  Assume from now on that $S$ is nontrivial.

Suppose first that $S$ is generalized group mapping with distinguished $\J$-class $J$ and $G_J$ admits a faithful irreducible representation. Then by Lemma~\ref{l:group.mapping.struct}, $J$ is the unique irreducible regular $\J$-class and $N_J=G_J$.   Thus $S$ has a faithful irreducible representation by Theorem~\ref{t:main} and Ga\-sch\"utz's Theorem (Corollary~\ref{c:gaschutz}). 

Next suppose that $S$ has a faithful irreducible representation.  Then by Theorem~\ref{t:main}, $\GGM$ is trivial and  $S$ must have a unique irreducible regular $\J$-class $J$.   But then $S$ is generalized group mapping with distinguished $\J$-class $J$ by Lemma~\ref{l:group.mapping.struct} and $N_J=G_J$. 
Therefore, $p\nmid |A(G_J)|$ and $S(G_J)$ can be generated by a single element, again by Theorem~\ref{t:main}. We conclude that $G_J$ has a faithful irreducible representation by Corollary~\ref{c:gaschutz}, completing the proof.
\end{proof}

\def\malce{\mathbin{\hbox{$\bigcirc$\rlap{\kern-7.75pt\raise0,50pt\hbox{${\tt
  m}$}}}}}\def\cprime{$'$} \def\cprime{$'$} \def\cprime{$'$} \def\cprime{$'$}
  \def\cprime{$'$} \def\cprime{$'$} \def\cprime{$'$} \def\cprime{$'$}
  \def\cprime{$'$} \def\cprime{$'$}

\end{document}